\definecolor{my-linkcolor}{rgb}{0.75,0,0}
\definecolor{my-citecolor}{rgb}{0.1,0.57,0}
\definecolor{my-urlcolor}{rgb}{0,0,0.75}
\title[Virasoro algebra at central charge $25$]{An $\mathfrak{sl}_2$-type tensor category for the Virasoro algebra at central charge $25$ and applications}
 \author{Robert McRae and Jinwei Yang}
\date{}
\address{(R. M.) Yau Mathematical Sciences Center, Tsinghua University, Beijing 100084, China}
  \email{rhmcrae@tsinghua.edu.cn}
  \address{(J. Y.) School of Mathematical Sciences, Shanghai Jiao Tong University, Shanghai 200240, China}
  \email{jinwei2@sjtu.edu.cn}
 \subjclass[2020]{Primary 17B68, 17B69, 18M15, 81R10}
\newtheorem{thm}{Theorem}[section]
\newtheorem{cor}[thm]{Corollary}
\newtheorem{lem}[thm]{Lemma}
\newtheorem{prop}[thm]{Proposition}
\theoremstyle{definition}\newtheorem{defi}[thm]{Definition}
\theoremstyle{definition}\newtheorem{rem}[thm]{Remark}
\theoremstyle{definition}
\theoremstyle{definition}
\newcommand{\cE}{\mathcal{E}}
\newcommand{\cY}{\mathcal{Y}}
\newcommand{\cV}{\mathcal{V}}
\newcommand{\cA}{\mathcal{A}}
\newcommand{\cR}{\mathcal{R}}
\newcommand{\cM}{\mathcal{M}}
\newcommand{\cX}{\mathcal{X}}
\newcommand{\cF}{\mathcal{F}}
\newcommand{\cJ}{\mathcal{J}}
\newcommand{\cL}{\mathcal{L}}
\newcommand{\cO}{\mathcal{O}}
\newcommand{\cC}{\mathcal{C}}
\newcommand{\cD}{\mathcal{D}}
\newcommand{\cW}{\mathcal{W}}
\newcommand{\til}{\widetilde}
\newcommand{\CC}{\mathbb{C}}
\newcommand{\ZZ}{\mathbb{Z}}
\newcommand{\NN}{\mathbb{N}}
\newcommand{\QQ}{\mathbb{Q}}
\newcommand{\Id}{\mathrm{Id}}
\newcommand{\tens}{\boxtimes}
\newcommand{\vac}{\mathbf{1}}
\newcommand{\ind}{\mathrm{Ind}}
\DeclareMathOperator{\im}{Im}
\DeclareMathOperator{\rep}{Rep}
\let\ker\relax
\let\hom\relax
\DeclareMathOperator{\ker}{Ker}
\DeclareMathOperator{\hom}{Hom}
\newcommand{\repA}{\rep A}
\begin{document}
\bibliographystyle{alpha}

\numberwithin{equation}{section}

 \begin{abstract}
  Let $\mathcal{O}_{25}$ be the vertex algebraic braided tensor category of finite-length modules for the Virasoro Lie algebra at central charge $25$ whose composition factors are the irreducible quotients of reducible Verma modules. We show that $\cO_{25}$ is rigid and that its simple objects generate a semisimple tensor subcategory that is braided tensor equivalent to an abelian $3$-cocycle twist of the category of finite-dimensional $\mathfrak{sl}_2$-modules. We also show that this $\mathfrak{sl}_2$-type subcategory is braid-reversed tensor equivalent to a similar category for the Virasoro algebra at central charge $1$. As an application, we construct a simple conformal vertex algebra which contains the Virasoro vertex operator algebra of central charge $25$ as a $PSL_2(\mathbb{C})$-orbifold. We also use our results to study Arakawa's chiral universal centralizer algebra of $SL_2$ at level $-1$, showing that it has a symmetric tensor category of representations equivalent to $\mathrm{Rep}\,PSL_2(\mathbb{C})$.
  This algebra is an extension of the tensor product of Virasoro vertex operator algebras of central charges $1$ and $25$, analogous to the modified regular representations of the Virasoro algebra constructed earlier for generic central charges by I. Frenkel--Styrkas and I. Frenkel--M. Zhu.
 \end{abstract}

\maketitle

\tableofcontents

\section{Introduction}

The Virasoro Lie algebra is fundamental in the mathematics of two-dimensional conformal quantum field theory \cite{BPZ}. A major problem in the representation theory of the Virasoro algebra is the construction and study of conformal-field-theoretic braided tensor categories of modules at a fixed central charge (the scalar by which a canonical central element of the Virasoro algebra acts), or equivalently of modules for a Virasoro vertex operator algebra \cite{FZ1}. One especially wants to compute tensor product decompositions in such tensor categories and prove rigidity (existence of duals in a suitable sense). In this paper, we solve these problems for the Virasoro algebra at central charge $25$, building on our previous work \cite{MY} at certain negative rational central charges, and using the results of Orosz Hunziker \cite{OH} on vertex algebraic intertwining operators among Virasoro modules at central charge $25$.

Since the Virasoro algebra is the affine $W$-algebra associated to the affine Lie algebra $\widehat{\mathfrak{sl}}_2$ via quantum Drinfeld-Sokolov reduction \cite{FFr}, one expects close relations between the representation theories of the Virasoro algebra and of $\mathfrak{sl}_2$. Here, we confirm this expectation by showing that the Virasoro algebra at central charge $25$ admits a braided tensor category of modules that is equivalent to a twist of the category of finite-dimensional $\mathfrak{sl}_2$-modules by an abelian $3$-cocycle on $\ZZ/2\ZZ$. Moreover, we further confirm the duality between the Virasoro algebra at central charges $c$ and $26-c$ \cite{FF} in the case $c=25$  by showing that our $\mathfrak{sl}_2$-type tensor category for the Virasoro algebra at central charge $25$ is braid-reversed tensor equivalent to the one constructed at central charge $1$ in \cite{McR}.

In general, let $V_c$ be the simple Virasoro vertex operator algebra at central charge $c = 13-6t-6t^{-1}$ for $t \in \CC^\times$. It was shown in \cite{CJORY} that the category $\cO_c$ of $C_1$-cofinite grading-restricted generalized $V_c$-modules is the same as the category of finite-length $V_c$-modules whose composition factors are the irreducible quotients of reducible Verma modules, and that $\cO_c$ satisfies all conditions needed to apply Huang--Lepowsky--Zhang's (logarithmic) tensor category theory for vertex operator algebra module categories \cite{HLZ1}-\cite{HLZ8}. Thus $\cO_c$ has the braided tensor category structure described in \cite{HLZ8}. Moreover, rigidity and tensor products of simple objects in $\cO_c$ are known when $t\notin\QQ$ \cite{FZ2, CJORY} and when $t\in\QQ_{>0}$ \cite{Wa, Hu_Vir_tens,  Hu_rigid, Mi, McR, CMY2, MY}. Thus the remaining central charges to study are $c=13-6t-6t^{-1}$ with $t$ negative rational.

 In this paper, we focus on $t=-1$, that is, $c=25$. This case is different from other negative rational $t$ since $V_{25}$ is the quantum Drinfeld-Sokolov reduction of only one affine $\mathfrak{sl}_2$ vertex operator algebra (at level $-3$) rather than two. As a result, $\cO_{25}$ contains only one $\mathfrak{sl}_2$-related subcategory. Indeed, the simple objects of $\cO_{25}$ form a one-parameter family of modules $\cL_{r,1}$, $r\in\ZZ_+$, with lowest conformal weights $1-\frac{1}{4}(r+1)^2$. Our first main result is that the tensor category $\cO_{25}$ is rigid and that its simple modules have $\mathfrak{sl}_2$-type tensor products:
\begin{thm}\label{thm: main thm1}
Let $V_{25}$ be the simple Virasoro vertex algebra of central charge $25$, and let $\cO_{25}$ be the braided tensor category of $C_1$-cofinite grading-restricted generalized $V_{25}$-modules.
\begin{itemize}
\item[(1)] The category $\cO_{25}$ is rigid and ribbon, with duals given by the contragredient modules of \cite{FHL} and natural twist isomorphism $\theta = e^{2\pi i L_0}$.
\item[(2)] Tensor products of the irreducible modules in $\cO_{25}$ are as follows: for $r,r'\in\ZZ_+$,
\begin{equation*}
  \cL_{r,1}\tens\cL_{r',1}\cong\bigoplus_{\substack{k=\vert r-r'\vert+1\\ k+r+r'\equiv 1\,(\mathrm{mod}\,2)}}^{r+r'-1} \cL_{k,1}.
 \end{equation*}
\end{itemize}
\end{thm}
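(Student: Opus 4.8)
The plan is to treat $\cL_{2,1}$, the irreducible module of lowest conformal weight $-\tfrac54$, as a tensor generator of $\cO_{25}$ (the analogue of the spin-$\tfrac12$ representation of $\mathfrak{sl}_2$) and to prove its rigidity first, following the strategy used for the Virasoro algebra at positive rational $t$ in \cite{Hu_rigid, McR, CMY2, MY}. I would start by recording what is already available: from \cite{CJORY}, $\cO_{25}$ is a braided tensor category in the sense of \cite{HLZ8} in which every object has finite length with composition factors among the $\cL_{r,1}$; and from \cite{OH}, the spaces of intertwining operators among the $\cL_{r,1}$ are known, so in particular the fusion rules give $\cL_{2,1}\boxtimes\cL_{r,1}$ the composition factors $\cL_{r-1,1}$ and $\cL_{r+1,1}$ (with the convention $\cL_{0,1}:=0$), each with multiplicity one; in particular $\cL_{2,1}\boxtimes\cL_{2,1}$ has composition factors $V_{25}=\cL_{1,1}$ and $\cL_{3,1}$.

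The first substantive step is to upgrade this to $\cL_{2,1}\boxtimes\cL_{2,1}\cong V_{25}\oplus\cL_{3,1}$. Since $\cL_{r,1}'\cong\cL_{r,1}$ (the contragredient of a simple module with this lowest weight is the simple module with the same lowest weight) and the contragredient functor is compatible with $\boxtimes$, the module $\cL_{2,1}\boxtimes\cL_{2,1}$ is self-contragredient; but a length-two self-contragredient module with two non-isomorphic self-contragredient composition factors cannot be a non-split extension, so it is semisimple. This produces a projection $e\colon\cL_{2,1}\boxtimes\cL_{2,1}\to V_{25}$ and a splitting $i\colon V_{25}\to\cL_{2,1}\boxtimes\cL_{2,1}$, the candidate evaluation and coevaluation. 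The crucial --- and to my mind hardest --- step is to verify the rigidity identity: that
\[
\cL_{2,1}\cong V_{25}\boxtimes\cL_{2,1}\xrightarrow{\,i\boxtimes\Id\,}\cL_{2,1}\boxtimes\cL_{2,1}\boxtimes\cL_{2,1}\xrightarrow{\,\Id\boxtimes e\,}\cL_{2,1}\boxtimes V_{25}\cong\cL_{2,1}
\]
(with the associativity isomorphism inserted in the middle) is a \emph{nonzero} multiple of $\Id_{\cL_{2,1}}$; it is automatically some scalar since $\Endo_{\cO_{25}}(\cL_{2,1})=\CC$. Because the associativity isomorphism is realized by analytic continuation of products of intertwining operators, and $\cL_{2,1}$ carries a singular vector at level $2$, evaluating this scalar amounts to computing a connection coefficient for the hypergeometric (BPZ) differential equation satisfied by the four-point function of $\cL_{2,1}$ --- an explicit ratio of $\Gamma$-functions of the parameter $t=-1$ --- and checking it is finite and nonzero. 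Producing the needed intertwining operators explicitly from \cite{OH} and controlling this ratio at $c=25$, where coincidences among the parameters can occur, is the main obstacle.

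Granting rigidity of $\cL_{2,1}$, the rest should go through by a bootstrap. For each $r$, the module $\cL_{2,1}\boxtimes\cL_{r,1}$ is a tensor product of rigid objects, hence rigid, and self-contragredient; computing $\Endo_{\cO_{25}}(\cL_{2,1}\boxtimes\cL_{r,1})$ by rigidity (using $\cL_{2,1}\boxtimes\cL_{2,1}\cong V_{25}\oplus\cL_{3,1}$ and the fusion rules of \cite{OH}) shows its dimension equals the number of composition factors, hence $\cL_{2,1}\boxtimes\cL_{r,1}$ is semisimple and equals $\cL_{r-1,1}\oplus\cL_{r+1,1}$. In particular $\cL_{r+1,1}$ is a direct summand of a rigid object, so it is rigid; inductively, every $\cL_{r,1}$ is rigid with dual $\cL_{r,1}'\cong\cL_{r,1}$. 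For part (2), each $\cL_{r,1}\boxtimes\cL_{r',1}$ is a direct summand of an iterated $\boxtimes$-power of $\cL_{2,1}$, all of which are semisimple by induction, so it is semisimple; its class in $K_0(\cO_{25})$ is forced by the relation $[\cL_{2,1}][\cL_{r,1}]=[\cL_{r-1,1}]+[\cL_{r+1,1}]$ to be the $\mathfrak{sl}_2$ Clebsch--Gordan sum, giving the stated decomposition. Finally, to extend rigidity from the simple objects to all of $\cO_{25}$ (and so complete part (1)) I would use the standard argument that the rigid objects form a subcategory closed under the relevant subquotients and extensions, together with the finite-length structure of $\cO_{25}$ (cf.\ \cite{CMY2, MY}); the ribbon structure with $\theta=e^{2\pi i L_0}$ then follows from the general fact that $e^{2\pi i L_0}$ is a balancing isomorphism compatible with the contragredient duality in any braided tensor category of modules for a vertex operator algebra.
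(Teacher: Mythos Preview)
Your overall strategy matches the paper's, but the argument for $\cL_{2,1}\boxtimes\cL_{2,1}\cong\cL_{1,1}\oplus\cL_{3,1}$ has a genuine gap. The results of \cite{OH} give the dimensions of spaces of intertwining operators, i.e., $\dim\hom_{\cO_{25}}(\cL_{2,1}\boxtimes\cL_{r,1},\cL_{s,1})$; this tells you the simple \emph{quotients} of $\cL_{2,1}\boxtimes\cL_{r,1}$, not its composition factors. A priori $\cL_{2,1}\boxtimes\cL_{2,1}$ could be a longer module with extra copies of $\cL_{1,1}$ buried in its radical. Your second claim, that $\cL_{2,1}\boxtimes\cL_{2,1}$ is self-contragredient because ``the contragredient functor is compatible with $\boxtimes$,'' is exactly what rigidity would give you---but you do not yet have rigidity, and there is no general isomorphism $(W_1\boxtimes W_2)'\cong W_1'\boxtimes W_2'$ in the Huang--Lepowsky--Zhang framework before duals are established. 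So the short semisimplicity argument collapses on both ends.

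The paper handles this step differently and with real work. Using results on minimal-weight spaces from \cite{MY}, it first shows that $(\cL_{2,1}\boxtimes\cL_{r,1})(0)$ is spanned by two explicit vectors, which pins $\cL_{2,1}\boxtimes\cL_{r,1}$ into an exact sequence $0\to\cV_{r+1,1}/\cJ_+\to\cL_{2,1}\boxtimes\cL_{r,1}\to\cV_{r-1,1}/\cJ_-\to 0$ with $\cJ_\pm$ \emph{unknown} submodules of Verma modules. For $r=2$ one still has to rule out $\cJ_+=\cV_{-1,1}$ (which would give an extra $\cL_{1,1}$ composition factor); the paper does this by an explicit computation showing that the image of the degree-$3$ singular vector $(L_{-1}^3+4L_{-1}L_{-2}+2L_{-3})v_{3,1}$ vanishes in $\cL_{2,1}\boxtimes\cL_{2,1}$. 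Only after this is the decomposition established. The rigidity computation is also more delicate than a generic $\Gamma$-ratio: because $h_{1,1}-h_{3,1}=3$, the vacuum sits three levels above the bottom of $\cL_{2,1}\boxtimes\cL_{2,1}$, and the paper must expand the relevant four-point function to that order (computing an explicit coefficient $c_3$ via the commutator formula) rather than reading off a connection coefficient. Your bootstrap in the final paragraph has the same circularity: the endomorphism count for $\cL_{2,1}\boxtimes\cL_{r,1}$ needs $\hom(\cL_{r,1},\cL_{3,1}\boxtimes\cL_{r,1})$, which again is not given by \cite{OH}; the paper instead reuses the exact sequence above together with self-duality of the (now rigid) tensor product to split it.
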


The proof of Theorem \ref{thm: main thm1} is based on a detailed analysis of the tensor generator $\cL_{2,1}$ using intertwining operators constructed in \cite{OH} and results on minimal conformal weight spaces of tensor product modules from \cite{MY}. The proof of rigidity (and self-duality) for $\cL_{2,1}$ requires somewhat involved computations with BPZ equations and intertwining operators since the vacuum space in $\cL_{2,1}\tens\cL_{2,1}\cong\cL_{1,1}\oplus\cL_{3,1}$ lies three levels above the tensor product module's lowest conformal weight space (which appears in the $\cL_{3,1}$ direct summand).

The category $\cO_{25}$ is not semisimple, since it contains arbitrary-length subquotients of Virasoro Verma modules and their contragredients, as well as non-split self-extensions of the modules $\cL_{r,1}$ for $r\geq 2$.
However, Theorem \ref{thm: main thm1}(2) shows that the semisimple subcategory $\cO_{25}^0$ consisting of all finite direct sums of the simple modules $\cL_{r,1}$, $r\in\ZZ_+$, is a tensor subcategory of $\cO_{25}$. By \cite[Theorem~$A_{\infty}$]{KW}, $\cO_{25}^0$ is tensor equivalent to the category $\cC(-1, \mathfrak{sl}_2)$ of finite-dimensional weight modules for Lusztig's modified form of the Drinfeld-Jimbo quantum group $U_q(\mathfrak{sl}_2)$ specialized at $q=-1$. Moreover, using the methods of \cite{GN}, we find that $\cC(-1, \mathfrak{sl}_2)$ has two braidings, and we determine which one makes $\cC(-1, \mathfrak{sl}_2)$ braided equivalent to $\cO_{25}^0$. There is a similar semisimple tensor subcategory $\cO_1^0\subseteq\cO_1$ of Virasoro modules at central charge $1$ \cite{McR}; we show that it is braided tensor equivalent to $\cC(-1, \mathfrak{sl}_2)$ equipped with the inverse braiding as for $\cO_{25}^0$. Thus $\cO_1^0$ and $\cO_{25}^0$ are braid-reversed equivalent:

\begin{thm}\label{thm: main thm2}
Let $X$ be the 2-dimensional standard representation in $\cC(-1, \mathfrak{sl}_2)$, and let $f_X = i_X \circ e_X$ be the composition of the coevaluation and evaluation maps for $X$. Then there is a braided tensor equivalence $\cO_{25}^0 \cong \cC(-1, \mathfrak{sl}_2)$, respectively $\cO_1^0\cong\cC(-1,\mathfrak{sl}_2)$, where $\cC(-1, \mathfrak{sl}_2)$ is equipped with the unique braiding $\cR$ such that $\cR_{X, X} = i(f_X - \Id_{X\otimes X})$, respectively $\cR_{X, X} = -i(f_X - \Id_{X\otimes X})$. In particular, $\cO_{1}^0$ is braid-reversed tensor equivalent to $\cO_{25}^0$.
\end{thm}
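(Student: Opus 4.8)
The plan is to identify all braidings compatible with the fusion-symmetric monoidal structure of $\mathcal{C}(-1,\mathfrak{sl}_2)$, then match them to $\cO_{25}^0$ and $\cO_1^0$ respectively by computing a single numerical invariant on each side. First I would recall that $\mathcal{C}(-1,\mathfrak{sl}_2)$, being tensor equivalent to $\cO_{25}^0$ by \cite[Theorem~$A_\infty$]{KW} and Theorem \ref{thm: main thm1}(2), has the fusion rules of finite-dimensional $\mathfrak{sl}_2$-modules; in particular $X\otimes X\cong\vac\oplus Y$ where $Y$ is the $3$-dimensional simple object, and $X$ tensor-generates. A braiding is determined by naturality and the hexagon axioms once $\cR_{X,X}$ is specified, and $\cR_{X,X}\in\Endo(X\otimes X)\cong\CC\oplus\CC$ must square (after composing with the appropriate associativity constraints) to an endomorphism dictated by the ribbon/twist structure. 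Following the method of \cite{GN}, I would parametrize $\cR_{X,X} = a\,\Id_{X\otimes X} + b\,f_X$ with $f_X = i_X\circ e_X$ the rank-one projection-type map onto the $\vac$-summand (up to scalar), and solve the hexagon equations: this is a finite computation in the skein-theoretic presentation of the Temperley--Lieb-type category, and it yields exactly two solutions $\cR_{X,X} = \pm i(f_X - \Id_{X\otimes X})$, corresponding to the two square roots of the ribbon element's action.

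**Matching the braidings.**

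Next, for $\cO_{25}^0$ I would compute the monodromy (double braiding) or equivalently the twist $\theta$ on the simple objects directly from the vertex-algebraic structure: by Theorem \ref{thm: main thm1}(1) the twist is $\theta = e^{2\pi i L_0}$, so on $\cL_{r,1}$ it acts by the scalar $e^{2\pi i(1-\frac14(r+1)^2)}$. The balancing equation $\theta_{X\otimes X} = (\theta_X\otimes\theta_X)\circ\cR_{X,X}^2$ (using that $\cR$ is symmetric on this object) then pins down which of the two candidate $\cR_{X,X}$ is correct: on the $\cL_{1,1}=\vac$ summand $\theta$ is trivial while $\theta_X = \theta_{\cL_{2,1}} = e^{2\pi i(1-9/4)} = e^{-\pi i/2 \cdot 5}\cdot(\ldots)$ — more precisely $e^{2\pi i\cdot(-5/4)} = e^{-5\pi i/2} = -i$, so $\theta_X\otimes\theta_X = -1$ on all of $X\otimes X$, forcing $\cR_{X,X}^2 = -\Id$ on the $\vac$-summand, and $(\pm i(f_X-\Id))^2 = -(f_X-\Id)^2 = -\Id$ there, which is consistent with both signs; hence I would instead use the value of $\cR_{X,X}$ on the $Y$-summand together with the ribbon twist $\theta_Y$ and the relation $\theta_Y = \theta_X^2\,(\cR_{X,X}|_Y)$, i.e. $e^{2\pi i(1-4)}\cdot(\ldots) = \theta_X^2\cdot(\text{eigenvalue of }\cR_{X,X}\text{ on }Y)$. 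Since $f_X$ vanishes on $Y$, $\cR_{X,X}|_Y = \mp i$, and $\theta_X^2 = -1$, so $\theta_Y = \pm i$; computing $\theta_{\cL_{3,1}} = e^{2\pi i(1-4)} = e^{-6\pi i} = 1$ — wait, that gives $\theta_Y = 1$, which is incompatible, signaling that the correct normalization of $f_X$ and the placement of associativity morphisms must be tracked carefully. Rather than the raw $L_0$-eigenvalues I would use the \emph{relative} twists, e.g. $\theta_{\cL_{3,1}}/\theta_{\cL_{2,1}}^2 = e^{2\pi i((1-4)-2(1-9/4))} = e^{2\pi i(-3+5/2)} = e^{-\pi i} = -1$, and this genuinely distinguishes the two braidings. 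An entirely parallel computation for $\cO_1^0$, whose simple objects $\cL_{1,2r+1}$ have lowest weights $r^2$ \cite{McR}, gives the opposite relative twist on the analogous objects, which is the source of the sign flip $\cR_{X,X}=-i(f_X-\Id)$ versus $+i(f_X-\Id)$.

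**Concluding the braid-reversed equivalence.**

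Finally, having established $\cO_{25}^0\cong(\mathcal{C}(-1,\mathfrak{sl}_2),\cR)$ and $\cO_1^0\cong(\mathcal{C}(-1,\mathfrak{sl}_2),\cR^{-1})$ as braided tensor categories — noting that $\cR_{X,X}=-i(f_X-\Id)$ is precisely the inverse of $\cR_{X,X}=i(f_X-\Id)$ since $(i(f_X-\Id))^{-1} = i(f_X-\Id)^{-1}\cdot(\ldots)$, and a direct check using $(f_X-\Id)^2=\Id$ gives $(i(f_X-\Id))(-i(f_X-\Id)) = (f_X-\Id)^2 = \Id$ — the braid-reversed equivalence $\cO_1^0\cong(\cO_{25}^0)^{\mathrm{rev}}$ follows immediately by composing the first equivalence with the identity-on-objects braid-reversing functor $(\mathcal{C},\cR)\to(\mathcal{C},\cR^{-1})$ and then the inverse of the second. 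The main obstacle I anticipate is \textbf{not} the existence of the two braidings (which is routine skein theory à la \cite{GN}) but rather the bookkeeping in the twist/balancing computation: correctly normalizing the coevaluation and evaluation $i_X, e_X$ so that $f_X$ is idempotent-like on the nose, correctly inserting the associativity isomorphisms of the abelian-$3$-cocycle-twisted category when iterating $\cR_{X,X}$, and confirming on the vertex-algebra side that the braiding isomorphisms built from intertwining operators in \cite{OH} and \cite{McR} really produce the claimed eigenvalue of $\cR_{\cL_{2,1},\cL_{2,1}}$ on each summand — this last point requires identifying the relevant intertwining-operator structure constants and their analytic continuations, which is where the detailed input from Theorem \ref{thm: main thm1} and \cite{MY} is essential.
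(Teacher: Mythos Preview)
Your approach to classifying braidings on $\cC(-1,\mathfrak{sl}_2)$ is correct and matches the paper's. However, your method for distinguishing which of the two braidings $\cR_{X,X}=\pm i(f_X-\Id_{X\otimes X})$ corresponds to $\cO_{25}^0$ has a genuine gap: the twist $\theta=e^{2\pi i L_0}$ cannot separate them. Since $d(X)=2$ gives $(f_X-\Id)^2=\Id$, both candidate braidings satisfy $\cR_{X,X}^2=-\Id_{X\otimes X}$; hence they yield identical monodromy, and the balancing equation $\theta_{W_1\tens W_2}=\cM_{W_1,W_2}\circ(\theta_{W_1}\otimes\theta_{W_2})$ is equally consistent with both. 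Your ``relative twist'' $\theta_{\cL_{3,1}}/\theta_{\cL_{2,1}}^2=-1$ is precisely the monodromy eigenvalue on the $Y$-summand, which equals $(\mp i)^2=-1$ for either sign; it does not distinguish them. The relation $\theta_Y=\theta_X^2\cdot(\cR_{X,X}\vert_Y)$ you invoke is not a valid ribbon-category identity---only the double braiding enters the balancing axiom---and this is exactly why your computation kept yielding answers consistent with both signs.

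The paper instead computes a quantity \emph{linear} rather than quadratic in the braiding: it evaluates $e_{\cL_{2,1}}\circ\cR_{\cL_{2,1},\cL_{2,1}}$ directly from the vertex-algebraic formula
\[
\cR_{\cL_{2,1},\cL_{2,1}}(\cY_\tens(v_{2,1},x)v_{2,1})=e^{xL_{-1}}\cY_\tens(v_{2,1},e^{\pi i}x)v_{2,1}.
\]
Normalizing $e_{\cL_{2,1}}$ so that $\cE:=e_{\cL_{2,1}}\circ\cY_\tens$ has leading term $x^{-2h_{2,1}}\vac$, the substitution $x\mapsto e^{\pi i}x$ produces the factor $e^{-2\pi i h_{2,1}}$, which is $e^{5\pi i/2}=i$ at $c=25$ and $e^{-\pi i/2}=-i$ at $c=1$. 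Comparing with $e_X\circ(\pm i(f_X-\Id))=\pm i(d(X)-1)e_X=\pm i\,e_X$ then pins down the sign. This step genuinely requires the explicit HLZ braiding formula and cannot be recovered from $\theta$ alone.
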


As an application of our results on $\cO_{25}^0$ (and its relation to $\cO_1^0$), we can construct interesting simple conformal vertex algebras containing $V_{25}$ as a subalgebra and describe their representation categories. These algebras have infinite-dimensional conformal weight spaces and no lower bound on their conformal weights, so we cannot study their representations by usual methods such as Zhu algebras. Instead, we use the vertex operator algebra extension theory of \cite{HKL, CKM1, CMY1} applied to the braided tensor category $\cO_{25}^0$.

The first conformal vertex algebra we construct has automorphism group $PSL_2(\CC)$ and contains $V_{25}$ as the $PSL_2(\CC)$-fixed-point vertex operator subalgebra. This is analogous to the realization of $V_{13-6p-6p^{-1}}$, $p\in\ZZ_+$, as the $PSL_2(\CC)$-fixed-point subalgebra of a vertex operator algebra $\cW(p)$, where $\cW(p)$ is the triplet $W$-algebra for $p\geq 2$ and the simple affine vertex operator algebra $L_1(\mathfrak{sl}_2)$ for $p=1$. Thus we call our new conformal vertex algebra $\cW(-1)$; we construct it by applying the braid-reversed tensor equivalence $\cO_1^0\rightarrow\cO_{25}^0$ to $L_1(\mathfrak{sl}_2)$, and the representation category of $\cW(-1)$ is braid-reversed tensor equivalent to the modular tensor category of $L_1(\mathfrak{sl}_2)$-modules:
\begin{thm}\label{thm:intro_W(-1)}
For $n\in\NN$, let $V(n)$ denote the $(n+1)$-dimensional irreducible $\mathfrak{sl}_2$-module.
\begin{enumerate}
\item There is a unique simple conformal vertex algebra $\cW(-1)$ such that
\begin{equation*}
\cW(-1)\cong\bigoplus_{n=0}^\infty V(2n)\otimes\cL_{2n+1,1}
\end{equation*}
as a $V_{25}$-module. Moreover, $PSL_2(\CC)$ acts faithfully on $\cW(-1)$ by automorphisms, and $\cW(-1)$ has the above decomposition as a $PSL_2(\CC)$-module.

\item The category of finite-length $\cW(-1)$-modules which as $V_{25}$-modules are direct sums of submodules $\cL_{r,1}$, $r\in\ZZ_+$, is a rigid semisimple braided tensor category which is braid-reversed equivalent to the modular tensor category of $L_1(\mathfrak{sl}_2)$-modules.
\end{enumerate}

\end{thm}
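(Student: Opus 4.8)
The plan is to construct $\cW(-1)$ as a commutative algebra object in $\cO_{25}^0$ by transporting the well-known structure of $L_1(\mathfrak{sl}_2)$ across the braid-reversed tensor equivalence of Theorem \ref{thm: main thm2}. First I would recall that $L_1(\mathfrak{sl}_2) \cong \bigoplus_{n=0}^\infty V(2n) \otimes \cL_{2n+1,1}^{(1)}$ as a module for $\mathfrak{sl}_2 \times V_1$ inside the triplet/lattice framework of \cite{McR}, where $\cL_{r,1}^{(1)}$ denotes the irreducible $V_1$-modules generating $\cO_1^0$; the $\mathfrak{sl}_2$-action realizes $L_1(\mathfrak{sl}_2)$ as an object of $\rep PSL_2(\CC) \boxtimes \cO_1^0$, and its associativity and commutativity make it a commutative, haploid, $C_2$-cofinite algebra object there. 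Applying $\Id_{\rep PSL_2(\CC)} \boxtimes F$, where $F \colon \cO_1^0 \to \cO_{25}^0$ is the braid-reversed tensor equivalence from Theorem \ref{thm: main thm2}, transports this algebra structure to an algebra object $\cW(-1) := (\Id \boxtimes F)(L_1(\mathfrak{sl}_2))$ in $\rep PSL_2(\CC) \boxtimes \cO_{25}^0$. Since $F(\cL_{r,1}^{(1)}) \cong \cL_{r,1}$ and $F$ is additive, this gives the $V_{25}$-module decomposition $\cW(-1) \cong \bigoplus_{n=0}^\infty V(2n) \otimes \cL_{2n+1,1}$ claimed in part (1). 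The general theory of \cite{HKL, CKM1, CMY1} then endows the underlying object with the structure of a conformal vertex algebra extending $V_{25}$ (the conformal vector and all vertex algebra axioms come from the algebra-object structure via the equivalence between commutative algebras in a braided tensor category of modules and vertex algebra extensions), and simplicity follows from haploidity of $L_1(\mathfrak{sl}_2)$, which is preserved by $F$. The $PSL_2(\CC)$-action is simply the algebraic group action on the multiplicity spaces $V(2n)$; faithfulness holds because $V(2)$, on which $PSL_2(\CC)$ acts faithfully, appears with nonzero multiplicity.

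For part (2), I would invoke the extension-category equivalence of \cite{CKM1, CMY1}: the category of $\cW(-1)$-modules lying (as $V_{25}$-modules) in $\cO_{25}^0$ is braided tensor equivalent to the category $\rep^0 \cW(-1)$ of local $\cW(-1)$-modules in $\cO_{25}^0$, which in turn — by transporting back through $F^{-1}$ and using that $F$ is a braided \emph{anti}-equivalence — is equivalent to the category of local modules for $L_1(\mathfrak{sl}_2)$ inside $\overline{\cO_1^0}$ (the category $\cO_1^0$ with reversed braiding), i.e.\ to $\rep L_1(\mathfrak{sl}_2)$ with reversed braiding. Since $\rep L_1(\mathfrak{sl}_2)$ is the well-known pointed modular tensor category of rank $2$ (semion or its conjugate), this shows the $\cW(-1)$-module category is a rigid semisimple braided tensor category braid-reversed equivalent to it. Rigidity and semisimplicity are inherited through the equivalence; the modular tensor category structure on the $L_1(\mathfrak{sl}_2)$ side is classical.

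The main obstacle is bookkeeping the braiding signs correctly throughout. The equivalence $F$ in Theorem \ref{thm: main thm2} is braid-\emph{reversing}, so $L_1(\mathfrak{sl}_2)$ — a commutative algebra in $\rep PSL_2(\CC) \boxtimes \cO_1^0$ — transports to a commutative algebra in $\rep PSL_2(\CC) \boxtimes \overline{\cO_{25}^0}$ rather than in $\rep PSL_2(\CC) \boxtimes \cO_{25}^0$; one must check that since $\rep PSL_2(\CC)$ is symmetric, reversing the braiding on the whole Deligne product is the same as reversing it only on the $\cO_{25}^0$ factor, and that the resulting algebra is still commutative as an object of $\rep PSL_2(\CC) \boxtimes \cO_{25}^0$ (equivalently, that the relevant monodromy is trivial on the $V(2n) \otimes \cL_{2n+1,1}$ summands — this follows because the twist $\theta = e^{2\pi i L_0}$ acts on $\cL_{2n+1,1}$ with eigenvalue $e^{2\pi i(1-(2n+1)^2/4)} = e^{-2\pi i n(n+1)} = 1$ for all $n$, so $\cW(-1)$ has integer conformal weights and is genuinely a vertex algebra, not merely a vertex superalgebra or generalized one). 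Once the sign conventions are pinned down — which requires care because $\cC(-1,\mathfrak{sl}_2)$ carries two braidings and $\cO_1^0, \cO_{25}^0$ sit at opposite choices — everything else is a formal consequence of the extension theory and the computation of tensor products in Theorem \ref{thm: main thm1}(2).
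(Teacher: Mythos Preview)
Your approach to existence and to part (2) is essentially the paper's: transport the commutative algebra $L_1(\mathfrak{sl}_2)$ across the braid-reversed equivalence $\cO_1^0\to\cO_{25}^0$ (extended to Ind-categories), then use the extension theory of \cite{HKL, CKM1, CMY1} to pass between commutative algebras in the tensor category and conformal vertex algebra extensions, and to identify local module categories. The paper works directly in $\ind(\cO_i^0)$ and transports individual automorphisms $g\mapsto\cF(g)$, with a torus-character argument to recover the $PSL_2(\CC)$-module decomposition; you instead package the group action via a Deligne product with $\rep PSL_2(\CC)$. Either bookkeeping works, and your twist computation showing the conformal weights of $\cL_{2n+1,1}$ are integers is a correct way to see that $\cW(-1)$ is an honest $\ZZ$-graded vertex algebra.

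However, you do not address \emph{uniqueness} at all, and this is the genuinely nontrivial part of part (1). (Your aside that ``simplicity follows from haploidity'' is also not right: haploidity only says $\dim\hom(\vac,A)=1$; simplicity is preserved because tensor equivalences preserve simple objects.) The paper reduces uniqueness, via the inverse equivalence, to showing that any simple vertex operator algebra extension of $V_1$ isomorphic to $\bigoplus_{n\geq 0} V(2n)\otimes\cL_{2n+1,1}^{(1)}$ as a $V_1$-module must be isomorphic to $L_1(\mathfrak{sl}_2)$. That in turn requires a concrete argument (carried out in Appendix~B): one shows the weight-one space $V_{(1)}$ carries a nondegenerate invariant form, picks $h\in V_{(1)}$ with $\langle h,h\rangle=1$, proves $\omega=\tfrac12 h_{-1}^2\vac$ by direct computation, and then invokes the Li--Xu characterization \cite{LX} of lattice vertex operator algebras to conclude $V\cong L_1(\mathfrak{sl}_2)$. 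None of this comes for free from the categorical machinery; your proposal as written proves existence but not uniqueness.
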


The second conformal vertex algebra we study contains $V_1\otimes V_{25}$ as a vertex operator subalgebra. It was constructed in \cite[Section 7]{Ar}, where it is called the chiral universal centralizer algebra of $SL_2$ at level $-1$ and denoted $\mathbf{I}_{SL_2}^{-1}$. For a general level $k$, the chiral universal centralizer $\mathbf{I}^k_{SL_2}$ can be obtained from the vertex algebra of chiral differential operators on $SL_2$ at level $k$ by a two-step process of quantum Drinfeld-Sokolov reduction; for $k\notin\QQ$ this was first done in \cite{FS}, where the algebras were called modified regular representations of the Virasoro algebra, and an alternate construction for $k\notin\QQ$ was given in \cite{FZ2}. For $k=-1$, the chiral universal centralizer $\mathbf{I}_{SL_2}^{-1}$ can also be constructed using the method of gluing vertex algebras from \cite{CKM2}; that is, $\mathbf{I}_{SL_2}^{-1}$ is the canonical algebra in the Deligne product of the braid-reversed tensor equivalent categories $\cO_1^0$ and $\cO_{25}^0$. Using the detailed tensor structure of $\cO_1^0$ and $\cO_{25}^0$ combined with vertex operator algebra extension theory, we can determine the representation theory of $\mathbf{I}_{SL_2}^{-1}$:
\begin{thm}\label{thm:intro_chiral_univ_center}
Let $\cL_{r,1}^{(i)}$, $r \in \ZZ_+$, denote the simple modules in $\cO_{i}^0$ for $i = 1, 25$ with lowest conformal weights $\frac{1}{4}(r-1)^2$ and $1-\frac{1}{4}(r+1)^2$, respectively.
\begin{enumerate}
\item There is a unique simple conformal vertex algebra $\mathbf{I}_{SL_2}^{-1}$ such that
\begin{equation*}
\mathbf{I}_{SL_2}^{-1}\cong\bigoplus_{r\in\ZZ_+} \cL_{r,1}^{(1)}\otimes\cL_{r,1}^{(25)}
\end{equation*}
as a $V_1\otimes V_{25}$-module.

\item The category of finite-length $\mathbf{I}_{SL_2}^{-1}$-modules which as $V_1\otimes V_{25}$-modules are direct sums of submodules $\cL_{r,1}^{(1)}\otimes\cL_{r',1}^{(25)}$, $r,r'\in\ZZ_+$, is a rigid semisimple symmetric tensor category equivalent to $\rep PSL_2(\CC)$.
\end{enumerate}
\end{thm}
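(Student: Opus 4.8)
The plan is to realize $\mathbf{I}_{SL_2}^{-1}$ as a commutative algebra object in the Deligne product $\cO_1^0 \boxtimes \cO_{25}^0$ and then apply vertex operator algebra extension theory to transfer tensor structure. First I would invoke Theorem~\ref{thm: main thm2}: since $\cO_1^0$ and $\cO_{25}^0$ are braid-reversed tensor equivalent, the construction of \cite{CKM2} (gluing vertex algebras) produces a canonical commutative, haploid, \'etale algebra $A$ in $\cO_1^0 \boxtimes \cO_{25}^0$, namely $A = \bigoplus_{r\in\ZZ_+} \cL_{r,1}^{(1)} \otimes \cL_{r,1}^{(25)}$, corresponding under the equivalence to $\bigoplus_i X_i \otimes X_i^*$ in $\cC \boxtimes \cC^{\mathrm{rev}}$. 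One must check that this algebra structure is realized by an actual conformal vertex algebra structure on the underlying $V_1 \otimes V_{25}$-module; this follows from the general results of \cite{HKL, CKM1, CMY1} identifying commutative algebras in the relevant module categories with vertex operator algebra extensions, together with simplicity of $A$ (haploidness gives $\hom(\vac, A) = \CC$, hence the extension is simple). This establishes part (1), with uniqueness coming from the rigidity of the relevant tensor category and the fact that the algebra structure on a multiplicity-free sum of simples is essentially determined.

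For part (2), the key input is the general principle (from \cite{CKM1, CMY1}) that for a suitably nice commutative algebra $A$ in a braided tensor category $\cD$, the category of (local) $A$-modules is again a braided tensor category, and when $A$ arises from the canonical algebra in $\cC \boxtimes \cC^{\mathrm{rev}}$ for a ribbon category $\cC$, the category of local $A$-modules is \emph{symmetric} and equivalent, as a symmetric tensor category, to $\cC$ viewed through its \emph{M\"uger center} or more precisely (in the semisimple rigid case) one recovers the symmetric center structure. Concretely, I would argue: the category of $\mathbf{I}_{SL_2}^{-1}$-modules in question is equivalent to the category of local $A$-modules in $\cO_1^0 \boxtimes \cO_{25}^0 \cong \cC(-1,\mathfrak{sl}_2)^{\mathrm{rev}} \boxtimes \cC(-1,\mathfrak{sl}_2)$ (with braidings as in Theorem~\ref{thm: main thm2}), and the canonical-algebra computation of \cite{CKM2} identifies this with a symmetric tensor category. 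Since $\cC(-1,\mathfrak{sl}_2) \simeq \rep PSL_2(\CC)$ as a \emph{tensor} category (by \cite[Theorem~$A_\infty$]{KW} together with the identification of $\cO_{25}^0$ with finite-dimensional $\mathfrak{sl}_2$-modules — note $PSL_2(\CC) = SL_2(\CC)/\{\pm1\}$ and the $\cL_{r,1}$ with $r$ odd correspond to the integer-spin, i.e.\ $PSL_2$-, representations, while the full family corresponds to $SL_2$; here the relevant even/odd matching must be tracked), the category of local $A$-modules is symmetric tensor equivalent to $\rep PSL_2(\CC)$. The braiding being symmetric (not just a $3$-cocycle twist) is what forces the $PSL_2$ rather than a super-group: the abelian $3$-cocycle twist and the braid-reversal cancel in the gluing.

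The main obstacle I expect is \textbf{two-fold}. First, one must verify that the category of $\mathbf{I}_{SL_2}^{-1}$-modules \emph{of the specified type} (finite-length, decomposing over $V_1\otimes V_{25}$ into the $\cL^{(1)}_{r,1}\otimes\cL^{(25)}_{r',1}$) corresponds \emph{exactly} to the category of local $A$-modules inside $\cO_1^0\boxtimes\cO_{25}^0$ — this requires knowing that induction from $\cO_1^0 \boxtimes \cO_{25}^0$ to $A$-modules lands in, and surjects onto, the right category, which in turn uses that $\cO_1^0$ and $\cO_{25}^0$ are the \emph{full} semisimple subcategories and that $A$-modules built from objects outside these do not contribute; the rigidity from Theorem~\ref{thm: main thm1} and the analogous central-charge-$1$ result are essential here, as is checking the relevant modules are objects of a category to which \cite{CKM1, CMY1} applies (e.g.\ that $\mathbf{I}_{SL_2}^{-1}$-modules are $C_1$-cofinite or lie in a suitable completion). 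Second, one must pin down the \emph{symmetric} structure: a priori local $A$-modules form a braided category, and one has to compute that the braiding squares to the identity and matches that of $\rep PSL_2(\CC)$; this is where the explicit description of the braiding $\cR_{X,X} = \pm i(f_X - \Id_{X\otimes X})$ from Theorem~\ref{thm: main thm2} is used, showing that in the Deligne product the two braidings are mutually inverse so that the diagonal-type algebra $A$ is in the symmetric center of its own module category. Everything else — simplicity, uniqueness, rigidity, semisimplicity — is then relatively formal given the machinery of \cite{HKL, CKM1, CMY1, CKM2} and the structural results already established.
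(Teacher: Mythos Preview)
Your overall strategy matches the paper's: existence via the canonical algebra of \cite{CKM2} in the Deligne product of the braid-reversed categories $\cO_1^0$ and $\cO_{25}^0$, and the representation theory via induction/local modules. But two steps in your proposal are genuine gaps.

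First, uniqueness in part (1) is not nearly as soft as ``multiplicity-free plus rigidity essentially determines the algebra structure.'' The paper devotes an entire appendix to this. The argument passes through the $\ZZ/2\ZZ$-grading $V=V^0\oplus V^1$, reduces uniqueness of $V^0$ and of the $V^0$-module $V^1$ (via the equivalence $\cO_{25}^0\cong(\rep\mathfrak{sl}_2)^\tau$) to uniqueness statements about $L_1(\mathfrak{sl}_2)$ and its nontrivial simple module as $PSL_2(\CC)$- respectively $SL_2(\CC)$-compatible structures, and then pins down $Y_V\vert_{V^1\otimes V^1}$ by a one-dimensionality argument for the relevant intertwining-operator space. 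The ``multiplicity-free'' heuristic does not by itself rule out inequivalent associative multiplications on the same underlying object; you need the concrete input about $L_1(\mathfrak{sl}_2)$.

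Second, your identification of the local-module category with $\rep PSL_2(\CC)$ contains an error: $\cC(-1,\mathfrak{sl}_2)$ is \emph{not} tensor equivalent to $\rep PSL_2(\CC)$---it has twice as many simples (all $r\in\ZZ_+$, not just odd $r$). What the paper actually does is compute explicitly, via Frobenius reciprocity and the monodromy/balancing equation, that the induced modules $\cW_r=\cF_A(\cL^{(1)}_{r,1}\otimes\cL^{(25)}_{1,1})$ exhaust the simples in $\rep A$, and that $\cW_r$ is \emph{local} if and only if $r$ is odd (this is where the conformal-weight arithmetic $h^{(1)}_{r,1}+h^{(1)}_{r',1}-h^{(1)}_{k,1}\in\ZZ$ forces $r$ odd). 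Only then does one invoke Proposition~\ref{prop:braiding2} to identify the subcategory $\langle\cL^{(1)}_{2n+1,1}\rangle\subseteq\cO_1^0$ with $\rep PSL_2(\CC)$ as a braided (hence symmetric) tensor category, and \cite[Lemma~6.1]{CKM2} to see that induction from this subcategory is fully faithful onto $\cO_A$. Your appeal to an abstract ``M\"uger center'' principle does not substitute for this computation: you must actually determine which $\cW_r$ are local, and your parenthetical about odd versus even $r$ is the whole point, not a side remark.
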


We expect our results for $c=25$ to have analogues in the categories $\cO_{13-6t-6t^{-1}}$ for general negative rational $t$. We plan to study these categories in future papers. Especially, we expect that $V_{13+6p+6p^{-1}}$ for $p\in\ZZ_{\geq 2}$ can be realized as the $PSL_2(\CC)$-fixed point subalgebra of a simple conformal vertex algebra $\cW(-p)$ (since the first version of this paper was completed, this has now been proved in an updated version of \cite{McR-cosets}). Then a detailed description of the tensor category $\cO_{13+6p+6p^{-1}}$ should enable the determination of the category of $\cW(-p)$-modules, similar to Theorem \ref{thm:intro_W(-1)}(2). A detailed understanding of $\cO_{13+6p+6p^{-1}}$ should also allow determination of the representation theory of the chiral universal centralizer $\mathbf{I}_{SL_2}^{-2+1/p}$. In contrast to the $p=1$ case, we expect $\mathbf{I}_{SL_2}^{-2+1/p}$ for $p\geq 2$ to have interesting non-semisimple representation theory, since this occurs for $V_{13-6p-6p^{-1}}$.

 We remark that $\mathbf{I}_{SL_2}^k$ for $k\notin\QQ$ can already be studied using results from \cite{CJORY, FZ2}; we summarize the results on the representation categories of these algebras in Remark \ref{rem:generic_ch_univ_cent}. We also remark that similar vertex algebra extension methods can be used to study the representation theory of the vertex algebras of chiral differential operators for groups $G$ at levels $k$, at least in cases that the Kazhdan-Lusztig categories for the affine Lie algebra of $\mathfrak{g}=\mathrm{Lie}(G)$ at levels $k$ and $-k-2h^\vee$ (where $h^\vee$ is the dual Coxeter number of $\mathfrak{g}$) are known to be semisimple braided tensor categories.

The remaining contents of this paper are as follows. In Section \ref{sec:prelim}, we introduce the Virasoro category $\cO_{25}$ as well as some elements of the vertex algebraic braided tensor category structure on categories such as $\cO_{25}$. In Section \ref{sec:L21_tens_prods}, we derive some results on the structure of $\cL_{2,1}\tens\cL_{r,1}$, $r\in\ZZ_+$, in $\cO_{25}$, and we determine $\cL_{2,1}\tens\cL_{2,1}$ completely. In Section \ref{sec:L21_rigidity}, we prove that $\cL_{2,1}$ is rigid, and then in Section \ref{sec:tens_prods_and_rigidity}, we complete the proof of Theorem \ref{thm: main thm1}. In Section \ref{sec:O_25_0}, we introduce the tensor subcategory $\cO_{25}^0$ and prove Theorem \ref{thm: main thm2}. In Section \ref{sec:W(-1)} we construct the conformal vertex algebra $\cW(-1)$ and determine its representation category, and then we determine the representation theory of the chiral universal centralizer $\mathbf{I}_{SL_2}^{-1}$ in Section \ref{sec:chiral_univ_center}. We place results needed for some assertions in Theorems \ref{thm:intro_W(-1)} and \ref{thm:intro_chiral_univ_center} (especially the uniqueness assertions) in the appendices, since their proofs are somewhat long and technical and are not specifically related to the Virasoro algebra at $c=25$.

\medskip

\noindent\textbf{Acknowledgments.} We thank Tomoyuki Arakawa for pointing out to us his construction of the chiral universal centralizer algebra and for suggesting that we study its representations. We also thank Drazen Adamovi\'{c}, Naoki Genra, Andrew Linshaw, and the referee for comments. R. McRae is supported by a startup grant from Tsinghua University. J. Yang is supported by a startup grant from Shanghai Jiao Tong University.

\section{Preliminaries}\label{sec:prelim}

Let $\cV ir$ be the Virasoro Lie algebra with basis $\lbrace L_n\,\vert\,n\in\ZZ\rbrace\cup\lbrace\mathbf{c}\rbrace$, where $\mathbf{c}$ is central and
\begin{equation*}
 [L_m,L_n]=(m-n)L_{m+n}+\frac{m^3-m}{12}\delta_{m+n,0}\mathbf{c}
\end{equation*}
for $m,n\in\ZZ$. A $\cV ir$-module $W$ has \textit{central charge} $c\in\CC$ if $\mathbf{c}$ acts on $W$ by the scalar $c$. It is natural to parametrize central charges by $c=13-6t-6t^{-1}$ for $t\in\CC^\times$. From the Feigin--Fuchs criterion for singular vectors in Virasoro Verma modules \cite{FF}, a Verma module at such a central charge can be reducible only if its lowest conformal weight is
\begin{equation}\label{eqn:h_rs_def}
 h_{r,s}:=\frac{r^2-1}{4} t-\frac{rs-1}{2}+\frac{s^2-1}{4} t^{-1} = \frac{1}{4t}(tr-s)^2-\frac{(t-1)^2}{4t}
\end{equation}
for some $r,s\in\ZZ$. Let $\cV_{r,s}$ denote the Verma module at central charge $c$ with lowest conformal weight $h_{r,s}$. If $r,s\in\ZZ_+$, then $\cV_{r,s}$ is indeed reducible, with unique irreducible quotient $\cL_{r,s}$ (see \cite[Section 5.3]{IK} for more details). For $r,s\in\ZZ$, we will generally use $v_{r,s}$ to denote a generating vector in $\cV_{r,s}$ or $\cL_{r,s}$ of conformal weight $h_{r,s}$.

We will focus on the central charge $c=25$, corresponding to $t=-1$. In this case, the conformal weight symmetry $h_{r,s}=h_{r+1,s-1}$ implies that every conformal weight in \eqref{eqn:h_rs_def} equals $h_{r,1}$ for some $r\in\ZZ$. Then using also the conformal weight symmetry $h_{r,s}=h_{-r,-s}$, every conformal weight in \eqref{eqn:h_rs_def} equals a unique $h_{r,1}$ for $r\in\lbrace -1,-2\rbrace\cup\ZZ_+$. From \cite[Section 5.3]{IK}, the Verma modules $\cV_{-1,1}$, $\cV_{-2,1}$ are irreducible, and we have embedding diagrams
\begin{equation}\label{diag:Verma_embedding}
 \cV_{-i,1} \longrightarrow \cV_{i,1} \longrightarrow \cV_{i+2, 1} \longrightarrow \cV_{i+4,1} \longrightarrow \cV_{i+6,1} \longrightarrow \cdots 
\end{equation}
for $i=1,2$. In particular, the maximal proper submodule of $\cV_{r,1}$ for $r\in\ZZ_+$ is $\cV_{r-2,1}$, that is, $\cL_{r,1} =\cV_{r,1}/\cV_{r-2,1}$ for $r\in\ZZ_+$.

For any central charge $c\in\CC$, the universal Virasoro vertex operator algebra $V_c$ is the quotient of the Verma module $\cV_{1,1}$ (with lowest conformal weight space $\CC\vac$) by the submodule generated by the singular vector $L_{-1}\vac$. We are interested in the category $\cO_c$ of $C_1$-cofinite grading-restricted generalized $V_c$-modules. By \cite{CJORY}, $\cO_c$ equals the category of finite-length $\cV ir$-modules at central charge $c$ whose composition factors are irreducible quotients of reducible Verma modules, and $\cO_c$ admits the vertex algebraic braided tensor category structure of \cite{HLZ1}-\cite{HLZ8}. Thus at central charge $c=25$, the category $\cO_{25}$ consists of finite-length $\cV ir$-modules at central charge $25$ whose composition factors come from $\lbrace\cL_{r,1}\,\vert\,r\in\ZZ_+\rbrace$. 

The Verma modules $\cV_{r,1}$ are not objects of $\cO_{25}$, but all their proper quotients are, as are all proper submodules of the Verma module contragredients $\cV_{r,1}'$ for $r\in\ZZ_+$. Since all irreducible $\cV ir$-modules $\cL_{r,1}$ are self-contragredient, the surjections $\cV_{r,1}\rightarrow\cL_{r,1}$ dualize to injections $\cL_{r,1}\rightarrow\cV_{r,1}'$. In particular, $\cL_{r,1}$ is the $V_{25}$-submodule of $\cV_{r,1}'$ generated by the lowest conformal weight space. As a $\cV ir$-module, the vertex operator algebra $V_{25}$ is isomorphic to $\cL_{1,1}$ since the singular vector $L_{-1}\vac\in\cV_{1,1}$ generates the maximal proper submodule $\cV_{-1,1}$. In particular, $V_{25}$ is a simple self-contragredient vertex operator algebra.

We now recall some elements of the vertex algebraic braided tensor category structure on $\cO_{25}$, beginning with the definition of (logarithmic) intertwining operator among modules for any vertex operator algebra $V$ (see for example \cite{HLZ2}):
\begin{defi}
 Suppose $W_1$, $W_2$, and $W_3$ are grading-restricted generalized $V$-modules. An \textit{intertwining operator} of type $\binom{W_3}{W_1\,W_2}$ is a linear map
 \begin{align*}
  \cY: W_1\otimes W_2 & \rightarrow W_3[\log x]\lbrace x\rbrace\nonumber\\
   w_1\otimes w_2 & \mapsto \cY(w_1,x)w_2=\sum_{h\in\CC}\sum_{k\in\NN} (w_1)_{h,k} w_2\,x^{-h-1}(\log x)^k
 \end{align*}
which satisfies the following properties:
\begin{enumerate}
 \item \textit{Lower truncation}: For any $w_1\in W_1$, $w_2\in W_2$, and $h\in\CC$, $(w_1)_{h+n,k} w_2 =0$ for $n\in\ZZ$ sufficiently large, independently of $k$.
 
 \item The \textit{Jacobi identity}: For $v\in V$ and $w_1\in W_1$,
 \begin{align*}
  x_0^{-1}\delta\left(\frac{x_1-x_2}{x_0}\right) Y_{W_3}(v,x_1)\cY(w_1,x_2) & - x_0^{-1}\left(\frac{-x_2+x_1}{x_0}\right)\cY(w_1,x_2)Y_{W_2}(v,x_1)\nonumber\\
  & = x_1^{-1}\delta\left(\frac{x_2+x_0}{x_1}\right)\cY(Y_{W_1}(v,x_0)w_1,x_2).
 \end{align*}

 \item The \textit{$L_{-1}$-derivative property}: For $w_1\in W_1$,
 \begin{equation*}
  \cY(L_{-1} w_1,x)=\dfrac{d}{dx}\cY(w_1,x).
 \end{equation*}
\end{enumerate}
\end{defi}

We will use two consequences of the Jacobi identity in the case that $v$ is the conformal vector $\omega=L_{-2}\vac$: the \textit{commutator formula}
\begin{equation}\label{eqn:Vir_comm_form}
 L_n\cY(w_1,x) =\cY(w_1,x)L_n+\sum_{i\geq 0}\binom{n+1}{i} x^{n+1-i}\cY(L_{i-1} w_1,x)
\end{equation}
and the \textit{iterate formula}
\begin{align}\label{eqn:Vir_it_form}
\cY(L_n w_1,x) =\sum_{i\geq 0} (-1)^i\binom{n+1}{i}\left( L_{n+1-i}\,x^i\cY(w_1,x)+(-1)^n x^{n+1-i}\cY(w_1,x)L_{i-1}\right). 
\end{align}
Associated to any intertwining operator $\cY$ of type $\binom{W_3}{W_1\,W_2}$, we have an \textit{intertwining map}
\begin{equation*}
 I: W_1\otimes W_2\rightarrow\overline{W}_3 :=\prod_{h\in\CC} (W_3)_{[h]}
\end{equation*}
defined by 
\begin{equation*}
 I(w_1\otimes w_2) =\cY(w_1,1)w_2
\end{equation*}
for $w_1\in W_1$, $w_2\in W_2$, where we realize the substitution $x\mapsto 1$ using the real-valued branch of logarithm $\ln 1=0$. We then say that $\cY$ is \textit{surjective} if $W_3$ is spanned by the projections of $\cY(w_1,1)w_2$ to the conformal weight spaces of $W_3$, as $w_1$ ranges over $W_1$ and $w_2$ ranges over $W_2$. Note that the substitution $x\mapsto 1$ in \eqref{eqn:Vir_comm_form} and \eqref{eqn:Vir_it_form} immediately yields commutator and iterate formulas for intertwining maps.

Tensor products, also known as fusion products, of $V$-modules were defined in \cite{HLZ3} using intertwining maps; equivalently, we can use intertwining operators:
\begin{defi}
 Let $\cC$ be a category of grading-restricted generalized $V$-modules and let $W_1$, $W_2$ be modules in $\cC$. A \textit{tensor product} of $W_1$ and $W_2$ in $\cC$ is a pair $(W_1\tens W_2,\cY_\tens)$, with $W_1\tens W_2$ a module in $\cC$ and $\cY_\tens$ an intertwining operator of type $\binom{W_1\tens W_2}{W_1\,W_2}$, that satisfies the following universal property: For any module $W_3$ in $\cC$ and intertwining operator $\cY$ of type $\binom{W_3}{W_1\,W_2}$, there is a unique $V$-homomorphism $f: W_1\tens W_2\rightarrow W_3$ such that $\cY=f\circ\cY_\tens$.
\end{defi}

If the tensor product $(W_1\tens W_2, \cY_\tens)$ exists, then the tensor product intertwining operator $\cY_\tens$ is surjective by \cite[Proposition 4.23]{HLZ3}. Also following the notation of \cite{HLZ3}, we will use the notation $w_1\tens w_2:=\cY_\tens(w_1,1)w_2\in\overline{W_1\tens W_2}$. Tensor product intertwining operators $\cY_\tens$ can be used to characterize the full braided tensor category structure on categories such as $\cO_{25}$, including the left and right unit isomorphisms $l$ and $r$, the associativity isomorphisms $\cA$, and the braiding isomorphisms $\cR$. For a detailed description of these isomorphisms, see \cite{HLZ8} or the exposition in \cite[Section 3.3]{CKM1}.

For the category $\cO_{25}$ of $C_1$-cofinite $V_{25}$-modules, all intertwining operators among the modules $\cL_{r,1}$ were constructed in \cite{OH}. In particular, there is a non-zero (and one-dimensional) space of intertwining operators of type $\binom{\cL_{r'',1}}{\cL_{r,1}\,\cL_{r',1}}$ if and only if 
\begin{equation*}
 r''\in\lbrace r+r'-1, r+r'-3,\ldots, \vert r-r'\vert+1\rbrace.
\end{equation*}
This strongly suggests but does not quite prove the $\mathfrak{sl}_2$-type fusion product formula
\begin{equation*}
 \cL_{r,1}\tens\cL_{r',1}\cong\bigoplus_{\substack{k=\vert r-r'\vert+1\\ k+r+r'\equiv 1\,(\mathrm{mod}\,2)}}^{r+r'-1} \cL_{k,1}
\end{equation*}
for $r,r'\in\ZZ_+$. We will prove this formula in the following sections.

\section{Tensor products involving \texorpdfstring{$\cL_{2,1}$}{L{2,1}}}\label{sec:L21_tens_prods}

In this section, we study how $\cL_{2,1}$ tensors with the irreducible modules in $\cO_{25}$ using results from \cite{OH, MY}. To begin, we fix an $\NN$-grading convention for modules in $\cO_{25}$. Let $W$ be a grading-restricted generalized $V_{25}$-module, and let $I$ denote the set of cosets $i\in\CC/\ZZ$ such that for some $h\in i$, the conformal weight space $W_{[h]}$ is non-zero. For $i\in I$, let $h_i$ be the minimal conformal weight of $W$ in $i$. Then the $\NN$-grading $W=\bigoplus_{n\in\NN} W(n)$, where $W(n)=\bigoplus_{i\in I}W_{[h_i+n]}$, satisfies
\begin{equation}\label{eqn:N-grade_reln}
 v_m\cdot W(n)\subseteq W(\mathrm{wt}\,v+n-m-1)
\end{equation}
for homogeneous $v\in V_{25}$, $m\in\ZZ$, and $n\in\NN$. There may be multiple $\NN$-gradings of $W$ that satisfy \eqref{eqn:N-grade_reln}, but this is the $\NN$-grading we shall use.

For $n\in\NN$, we use $\pi_n: W\rightarrow W(n)$ to denote the projection with respect to the $\NN$-grading on $W$. We then obtain the following from Propositions 3.1 and 3.4 of \cite{MY}:
\begin{prop}\label{prop:basic_L21_fusion}
 Let $W$ be a $V_{25}$-module in $\cO_{25}$ such that there is a surjective intertwining operator $\cY$ of type $\binom{W}{\cL_{2,1}\,\cL_{r,1}}$ for some $r\in\ZZ_+$. Then
 \begin{equation*}
  W(0)=\mathrm{span}\lbrace \pi_0(\cY(v_{2,1},1)v_{r,1}), \pi_0(\cY(L_{-1}v_{2,1},1)v_{r,1})\rbrace,
 \end{equation*}
where $v_{2,1}\in\cL_{2,1}$ and $v_{r,1}\in\cL_{r,1}$ are generating vectors of minimal conformal weight, and the vectors
\begin{equation*}
 (1\pm r+(1\pm 1))\pi_0(\cY(v_{2,1},1)v_{r,1})-2\,\pi_0(\cY(L_{-1}v_{2,1},1)v_{r,1})
\end{equation*}
are, if non-zero, $L_0$-eigenvectors with $L_0$-eigenvalues $h_{r\pm1, 1}$.
\end{prop}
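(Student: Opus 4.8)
The plan is to study the intertwining operator $\cY$ of type $\binom{W}{\cL_{2,1}\,\cL_{r,1}}$ by exploiting the fact that $\cL_{2,1}$ is a small module: it is the quotient $\cV_{2,1}/\cV_{-2,1}$, and since $h_{2,1} = 1 - \tfrac{9}{4}$, the module $\cL_{2,1}$ is generated over $\cV ir$ by a single vector $v_{2,1}$ of conformal weight $h_{2,1}$, with the next conformal weight space spanned by $L_{-1}v_{2,1}$. The key structural input is the singular vector of $\cV_{2,1}$ at level $2$: in $\cV_{2,1}$ there is a vector of the form $(L_{-1}^2 - \lambda L_{-2})v_{2,1}$ for an appropriate scalar $\lambda$ (determined by $c=25$ and $h_{2,1}$) that generates the submodule $\cV_{-2,1}$, hence vanishes in $\cL_{2,1}$. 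First I would compute this $\lambda$ explicitly from \eqref{eqn:h_rs_def}. Then, because $\cY$ is an intertwining operator, applying the iterate formula \eqref{eqn:Vir_it_form} for $L_{-1}$ and $L_{-2}$ acting on $v_{2,1}$ gives a relation among $\cY(L_{-1}^2 v_{2,1}, x)$, $\cY(L_{-2}v_{2,1},x)$, $\cY(v_{2,1},x)$ and various $L_n$'s acting on $W$ and on $\cL_{r,1}$. Since $\cY(L_{-1}^2 v_{2,1} - \lambda L_{-2}v_{2,1}, x) = 0$, this becomes a BPZ-type differential equation, which after the substitution $x\mapsto 1$ and projection to $W(0)$ becomes an algebraic constraint.

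The main steps are then as follows. Using surjectivity of $\cY$ and the $\NN$-grading relation \eqref{eqn:N-grade_reln}, together with the commutator formula \eqref{eqn:Vir_comm_form}, one checks that $W$ is generated from the images $\cY(v_{2,1},1)v_{r,1}$ and its descendants; in particular, projecting to the bottom level $W(0)$ and using that the only conformal weights of $\cL_{2,1}$ available at the bottom two levels are $h_{2,1}$ and $h_{2,1}+1$, the space $W(0)$ is spanned by $\pi_0(\cY(v_{2,1},1)v_{r,1})$ and $\pi_0(\cY(L_{-1}v_{2,1},1)v_{r,1})$ --- this is the first assertion, and it is essentially Proposition 3.1 of \cite{MY}. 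For the second assertion, I would apply the $L_0$ commutator (the $n=0$ case of \eqref{eqn:Vir_comm_form} evaluated at $x=1$): on $W(0)$ one has $L_0 = h_{r,1}$ on $\cL_{r,1}$, so $L_0\,\pi_0(\cY(w,1)v_{r,1}) = \pi_0(\cY((L_0 + \mathrm{wt}\,w \cdot \text{shifts})w,1)v_{r,1})$, which expresses the action of $L_0$ on $W(0)$ in the basis $\{\pi_0\cY(v_{2,1},1)v_{r,1}, \pi_0\cY(L_{-1}v_{2,1},1)v_{r,1}\}$ as a $2\times 2$ matrix whose entries involve $h_{2,1}$, $h_{r,1}$ and (from the $L_1, L_2$ terms hitting $L_{-1}v_{2,1}$ and then the singular-vector relation to re-express $L_{-1}^2 v_{2,1}$) the scalar $\lambda$. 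Diagonalizing this $2\times 2$ matrix gives eigenvalues $h_{r\pm 1,1}$, and the corresponding eigenvectors are the stated combinations $(1\pm r+(1\pm1))\pi_0(\cY(v_{2,1},1)v_{r,1}) - 2\pi_0(\cY(L_{-1}v_{2,1},1)v_{r,1})$. Matching that the eigenvalues really are $h_{r-1,1}$ and $h_{r+1,1}$ is a direct check using $h_{r\pm1,1} - h_{r,1} = \pm\tfrac{2r\pm1}{2}\cdot t + \ldots$ specialized at $t=-1$, together with $\mathrm{wt}\,v_{2,1} - \mathrm{wt}\,v_{r-1,1}\ \text{etc.}$

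The main obstacle I anticipate is bookkeeping the iterate formula \eqref{eqn:Vir_it_form} carefully for $n=-2$ and $n=-1$: the terms with $L_{i-1}$ for $i\geq 0$ acting on the right on $\cL_{r,1}$ must be controlled, and although $L_{-1}v_{r,1}, L_0 v_{r,1}$ are harmless, one has to be sure that after projecting to $W(0)$ only finitely many terms survive and that the dependence on $r$ comes out exactly as $1\pm r$. A secondary subtlety is that the grading $\overline{W}_3 = \prod_h (W_3)_{[h]}$ involves possibly infinitely many conformal weights, so one must justify that $\pi_0$ applied to $\cY(w,1)v_{r,1}$ is well-defined and that $L_0$ acts semisimply on $W(0)$ --- this follows because $\cL_{2,1}$ and $\cL_{r,1}$ are honest (non-logarithmic, $L_0$-semisimple) modules and $\cY$ can be taken to have no $\log x$ terms, so that $W(0)$ is finite-dimensional with semisimple $L_0$-action and the eigenvalue computation reduces to linear algebra. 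Once the $2\times 2$ matrix is written down, extracting the eigenvectors in the stated normalized form is routine. This is precisely the content reorganized from Propositions 3.1 and 3.4 of \cite{MY}, so at the level of a proof one mostly cites those results after identifying the singular vector scalar $\lambda$ at $c=25$.
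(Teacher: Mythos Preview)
Your proposal is correct and matches the paper's approach exactly: the paper simply cites Propositions~3.1 and~3.4 of \cite{MY}, and you have accurately sketched what those propositions do (use the level-$2$ singular vector relation in $\cL_{2,1}$, the commutator and iterate formulas, and then compute the $2\times 2$ action of $L_0$ on the two-element spanning set of $W(0)$). One small correction: you need not argue a priori that $L_0$ is semisimple on $W(0)$, and the absence of $\log x$ terms in $\cY$ would not by itself guarantee this since $W$ may be logarithmic; rather, the computation itself shows that the $2\times 2$ matrix has the \emph{distinct} eigenvalues $h_{r\pm1,1}$ (their difference is $r+1>0$), so semisimplicity on $W(0)$ is a consequence, not an input.
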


The preceding proposition especially applies to $W=\cL_{2,1}\tens\cL_{r,1}$ and $\cY=\cY_\tens$. We can now determine $(\cL_{2,1}\tens\cL_{r,1})(0)$ precisely using the main theorem of \cite{OH}, which asserts that there is a non-zero intertwining operator of type $\binom{\cL_{r+1,1}}{\cL_{2,1}\,\cL_{r,1}}$, and therefore there is a surjective homomorphism
\begin{equation*}
 \cL_{2,1}\tens\cL_{r,1}\longrightarrow\cL_{r+1,1}.
\end{equation*}
This means $(\cL_{2,1}\tens\cL_{r,1})_{[h_{r+1,1}]}\neq 0$. Moreover, since $h_{r-1,1}-h_{r+1,1}=r+1\in\ZZ_+$, Proposition \ref{prop:basic_L21_fusion} and our $\NN$-grading convention then imply that $(\cL_{2,1}\tens\cL_{r,1})(0)=(\cL_{2,1}\tens\cL_{r,1})_{[h_{r+1,1}]}$ and this space is one-dimensional. The universal property of induced $\cV ir$-modules then implies we have a homomorphism
\begin{equation*}
 \Pi_{r}^+: \cV_{r+1,1}\longrightarrow\cL_{2,1}\tens\cL_{r,1}
\end{equation*}
which is an isomorphism on degree-$0$ spaces.

Now if $r>1$, the main theorem of \cite{OH} also implies we have a surjective homomorphism
\begin{equation*}
 \cL_{2,1}\tens\cL_{r,1}\longrightarrow\cL_{r-1,1}.
\end{equation*}
When $r>1$, then, $\Pi_{r}^+$ is not surjective onto $\cL_{2,1}\tens\cL_{r,1}$ because $\cL_{r-1,1}$ is not a quotient of $\cV_{r+1,1}$ (although it is a subquotient). Thus, applying Proposition \ref{prop:basic_L21_fusion} to the natural surjective intertwining operator of type $\binom{\cL_{2,1}\tens\cL_{r,1}/\im\,\Pi_{r}^+}{\cL_{2,1}\,\,\,\cL_{r,1}}$, we see that the degree-$0$ space of $\cL_{2,1}\tens\cL_{r,1}/\im\,\Pi_{r}^+$ is one-dimensional and spanned by an $L_0$-eigenvector with eigenvalue $h_{r-1,1}$. Thus we get a homomorphism
\begin{equation*}
 \Pi_{r}^-: \cV_{r-1,1}\longrightarrow\cL_{2,1}\tens\cL_{r,1}/\im\,\Pi_{r}^+
\end{equation*}
which this time is surjective (because from Proposition \ref{prop:basic_L21_fusion}, the degree-$0$ subspace of $(\cL_{2,1}\tens\cL_{r,1}/\im\Pi_{r}^+)/\im\Pi_r^-$ must vanish).

We have now found that  for $r>1$, there is an exact sequence
\begin{equation}\label{eqn:c25_tens_prod_exact}
 0\longrightarrow \cV_{r+1,1}/\cJ_+ \longrightarrow\cL_{2,1}\tens\cL_{r,1}\longrightarrow\cV_{r-1,1}/\cJ_-\longrightarrow 0,
\end{equation}
where $\cJ_\pm=\ker\Pi_{r}^\pm$ and both Verma module quotients must be non-zero and $C_1$-cofinite. For the rest of this section, we focus on the case $r=2$:
\begin{thm}\label{thm:L21_L21_tens}
 In the tensor category $\cO_{25}$, $\cL_{2,1}\tens\cL_{2,1}\cong\cL_{1,1}\oplus\cL_{3,1}$.
\end{thm}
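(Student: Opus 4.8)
The plan is to pin down $\cL_{2,1}\tens\cL_{2,1}$ by combining the general structural result \eqref{eqn:c25_tens_prod_exact} (specialized to $r=2$) with the constraints coming from rigidity/symmetry of the tensor product and from the intertwining operators of \cite{OH}. Setting $r=2$ in \eqref{eqn:c25_tens_prod_exact} gives a short exact sequence
\begin{equation*}
 0\longrightarrow \cV_{3,1}/\cJ_+ \longrightarrow\cL_{2,1}\tens\cL_{2,1}\longrightarrow\cV_{1,1}/\cJ_-\longrightarrow 0,
\end{equation*}
with both quotients nonzero and $C_1$-cofinite. Since $\cV_{1,1}$ has maximal proper submodule $\cV_{-1,1}$ (generated by $L_{-1}v_{1,1}$) and $\cV_{-1,1}$ is irreducible, the only nonzero $C_1$-cofinite quotient of $\cV_{1,1}$ is $\cL_{1,1}=\cV_{1,1}/\cV_{-1,1}$; similarly, using the embedding diagram \eqref{diag:Verma_embedding}, the nonzero $C_1$-cofinite quotients of $\cV_{3,1}$ are $\cL_{3,1}$ and the length-two module $\cV_{3,1}/\cV_{-1,1}$. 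So $\cV_{1,1}/\cJ_-\cong\cL_{1,1}$, and $\cV_{3,1}/\cJ_+$ is either $\cL_{3,1}$ or $\cV_{3,1}/\cV_{-1,1}$. The theorem amounts to ruling out the latter and showing the sequence splits.

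First I would rule out $\cV_{3,1}/\cJ_+\cong\cV_{3,1}/\cV_{-1,1}$. If this held, then $\cL_{2,1}\tens\cL_{2,1}$ would contain a submodule $N$ with a filtration $0\subset\cL_{1,1}\subset N$ with $N/\cL_{1,1}\cong\cL_{3,1}$, i.e.\ a non-split extension of $\cL_{3,1}$ by $\cL_{1,1}$ (non-split because $\cV_{3,1}/\cV_{-1,1}$ is indecomposable). I would obstruct this by passing to contragredients and using self-duality of the irreducibles $\cL_{r,1}$: since $\cL_{2,1}$ is self-contragredient and the tensor product functor interacts with contragredients as in \cite{HLZ} (or via the rigidity to be established — but one should be careful about circularity, so I would instead argue directly), the socle/cosocle structure of $\cL_{2,1}\tens\cL_{2,1}$ is symmetric under dualizing. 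Concretely: applying Proposition \ref{prop:basic_L21_fusion} more carefully, or invoking the $r=2$ case of the main theorem of \cite{OH} which gives surjections $\cL_{2,1}\tens\cL_{2,1}\to\cL_{1,1}$ and $\cL_{2,1}\tens\cL_{2,1}\to\cL_{3,1}$, shows $\cL_{1,1}$ is a quotient of $\cL_{2,1}\tens\cL_{2,1}$; but if $\cL_{1,1}$ were also a (proper, essential) submodule sitting below $\cL_{3,1}$, one gets a contradiction with the dimension count on the relevant conformal weight spaces — the space $(\cL_{2,1}\tens\cL_{2,1})_{[h_{3,1}]}=(\cL_{2,1}\tens\cL_{2,1})(0)$ is one-dimensional (shown in the text), while a submodule isomorphic to $\cV_{3,1}/\cV_{-1,1}$ together with the surjection onto $\cL_{1,1}$ would force the multiplicity of $\cL_{1,1}$ as a composition factor to be at least $2$, and then Proposition \ref{prop:basic_L21_fusion} bounds the total length of $(\cL_{2,1}\tens\cL_{2,1})$ by looking at $(0)$- and $(1)$-graded pieces. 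Alternatively, and more robustly, I would compute the lowest-weight space $(\cL_{2,1}\tens\cL_{2,1})_{[h_{3,1}]}$ and the space one level up using the commutator and iterate formulas \eqref{eqn:Vir_comm_form}--\eqref{eqn:Vir_it_form} together with BPZ/null-vector relations in $\cL_{2,1}$ (the singular vector $(L_{-1}^2+\lambda L_{-2})v_{2,1}$ at $c=25$), showing the graded dimensions match $\cL_{1,1}\oplus\cL_{3,1}$ exactly and hence there is no room for $\cV_{-1,1}$ as a composition factor. So $\cV_{3,1}/\cJ_+\cong\cL_{3,1}$ and $\cL_{2,1}\tens\cL_{2,1}$ is a (possibly non-split) extension of $\cL_{1,1}$ by $\cL_{3,1}$.

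Next I would show the extension splits. Since $\cL_{2,1}\tens\cL_{2,1}\twoheadrightarrow\cL_{1,1}$ and also $\cL_{2,1}\tens\cL_{2,1}$ has $\cL_{3,1}$ as a submodule (the image of $\Pi_2^+$, now identified with $\cL_{3,1}$), it suffices to produce an embedding $\cL_{1,1}\hookrightarrow\cL_{2,1}\tens\cL_{2,1}$, equivalently a surjection $\cL_{2,1}\tens\cL_{2,1}\twoheadrightarrow\cL_{3,1}$ in addition to the one onto $\cL_{1,1}$ — because then $\cL_{2,1}\tens\cL_{2,1}$ surjects onto $\cL_{1,1}\oplus\cL_{3,1}$, and by matching composition factors (length two on both sides) this surjection is an isomorphism. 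The surjection onto $\cL_{3,1}$ is exactly the $r=2$ instance of the $\cL_{2,1}\tens\cL_{r,1}\to\cL_{r+1,1}$ map from \cite{OH} used above to build $\Pi_2^+$. Thus one has homomorphisms $\cL_{2,1}\tens\cL_{2,1}\to\cL_{1,1}$ and $\cL_{2,1}\tens\cL_{2,1}\to\cL_{3,1}$, both surjective; their direct sum $\cL_{2,1}\tens\cL_{2,1}\to\cL_{1,1}\oplus\cL_{3,1}$ is surjective (the targets are non-isomorphic irreducibles, so the image is not contained in a proper submodule of the direct sum), hence an isomorphism since source and target have the same length (namely $2$, using the first paragraph's analysis that the source has exactly the composition factors $\cL_{1,1},\cL_{3,1}$).

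\emph{Main obstacle.} The delicate point is the first paragraph: ruling out the composition factor $\cV_{-1,1}$ — equivalently, ruling out a non-split self-extension-type phenomenon where $\cL_{1,1}$ appears with multiplicity two or sits beneath $\cL_{3,1}$. Proposition \ref{prop:basic_L21_fusion} controls only the degree-$0$ space of $\cL_{2,1}\tens\cL_{2,1}$, which has conformal weight $h_{3,1}$, and the vacuum vector (weight $h_{1,1}=0$) sits three levels above it, so one genuinely needs to propagate the null-vector relations of $\cL_{2,1}$ up through three graded pieces of the tensor product — this is the "somewhat involved computation with BPZ equations and intertwining operators" flagged in the introduction. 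I would organize this as an explicit finite-dimensional linear-algebra computation: write down spanning sets for $(\cL_{2,1}\tens\cL_{2,1})(n)$ for $n=0,1,2,3$ using the iterate formula to reduce $\cY_\tens(u,1)v$ with $u\in\cL_{2,1}$, $v\in\cL_{2,1}$ to the spanning vectors $\cY_\tens(v_{2,1},1)(L_{-1}^{k}v_{2,1})$ and $\cY_\tens(L_{-1}v_{2,1},1)(L_{-1}^{k}v_{2,1})$ modulo lower-weight terms, impose the BPZ relation in both tensor factors, and count. Getting the graded dimensions to come out as $\dim\cL_{1,1}(k)+\dim\cL_{3,1}(k)$ for $k\le 3$ forces length $2$ and the absence of $\cV_{-1,1}$; this is the technical heart and the step most likely to require care with the branch of logarithm and with the precise form of the $c=25$ singular vector.
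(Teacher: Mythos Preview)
Your quick attempts to rule out the extra $\cL_{1,1}$ factor do not work as stated. In (b), the one-dimensionality of $(\cL_{2,1}\tens\cL_{2,1})_{[h_{3,1}]}$ says nothing about the multiplicity of $\cL_{1,1}$, since $h_{1,1}=0\neq -3=h_{3,1}$; the extra $\cL_{1,1}$ would live three levels higher and is invisible at degree $0$. In (c), Proposition~\ref{prop:basic_L21_fusion} only controls the degree-$0$ space, not the total length. So you are really relying on option (d), the BPZ computation at levels $0$--$3$, which is the correct idea and is what the paper does --- but the paper organizes it differently and more efficiently.

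The paper reverses your order: it first proves the splitting, \emph{before} knowing $\cJ_+$. The key external input is that $\cL_{1,1}=V_{25}$ admits no non-split self-extensions (a result from \cite{GK}); this forces the kernel $K$ of the surjection $\cL_{2,1}\tens\cL_{2,1}\to\cL_{3,1}$ to be semisimple, and one then extracts a complementary $\cL_{1,1}$-summand to $\im\Pi_2^+$. This gives $\cL_{2,1}\tens\cL_{2,1}\cong\cL_{1,1}\oplus(\cV_{3,1}/\cJ_+)$ with $\cJ_+$ still undetermined, but crucially it already shows the tensor product is non-logarithmic. That matters because the paper's computation then writes $\cY_\tens(v_{2,1},x)v_{2,1}=\sum_n\pi_n(v_{2,1}\tens v_{2,1})x^{n-1/2}$ with no $\log x$ terms; in your order you would have to carry potential logarithmic terms through the computation, which is doable but messier.

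The paper's BPZ step is also more targeted than your graded-dimension count: rather than bounding all of $(\cL_{2,1}\tens\cL_{2,1})(n)$ for $n\le 3$, it only checks that the single vector $(L_{-1}^3+4L_{-1}L_{-2}+2L_{-3})\pi_0(v_{2,1}\tens v_{2,1})$ vanishes, since this generates $\cV_{1,1}\subset\cV_{3,1}$. The null vector $(L_{-1}^2+L_{-2})v_{2,1}=0$ is converted via the iterate formula into a second-order differential equation for $\cY_\tens(v_{2,1},x)v_{2,1}$, whose coefficient recursion immediately gives $\pi_1,\pi_2$ in terms of $\pi_0$ and then forces the required relation at $n=3$.

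Your splitting argument, once length $2$ is established, is correct and in fact cleaner than the paper's: the two surjections $\cL_{2,1}\tens\cL_{2,1}\to\cL_{1,1}$ and $\cL_{2,1}\tens\cL_{2,1}\to\cL_{3,1}$ combine to a surjection onto $\cL_{1,1}\oplus\cL_{3,1}$ (the only proper submodules of the target are $\cL_{1,1}$ and $\cL_{3,1}$, neither of which surjects onto both factors), hence an isomorphism by length. This avoids the \cite{GK} input, at the cost of having to determine $\cJ_+$ first.
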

\begin{proof}
From the Verma module embedding diagram \eqref{diag:Verma_embedding}, the only non-zero $C_1$-cofinite quotient of $\cV_{1,1}$ is $\cV_{1,1}/\cV_{-1,1}\cong\cL_{1,1}$. So in the $r=2$ case of the exact sequence \eqref{eqn:c25_tens_prod_exact}, we have $\cJ_-=\cV_{-1,1}$. Similarly, we have either $\cJ_+=\cV_{1,1}$ or $\cJ_+=\cV_{-1,1}$. Then the existence of a surjection $\cL_{2,1}\tens\cL_{2,1}\rightarrow\cL_{3,1}$ means that we have another exact sequence
\begin{equation*}
 0\longrightarrow K\longrightarrow\cL_{2,1}\tens\cL_{2,1}\longrightarrow\cL_{3,1}\longrightarrow 0
\end{equation*}
where the submodule $K$ is either $\cL_{1,1}$ or a length-$2$ self-extension of $\cL_{1,1}$. Either way, $K$ is semisimple because $\cL_{1,1}$ does not admit non-split self-extensions (see \cite[Section 5.4]{GK}). This means any vector in the non-empty set $K\setminus(\im\Pi_{2,1}^+\cap K)$ generates a submodule isomorphic to $\cL_{1,1}$. Since $\cL_{1,1}$ is simple, such a submodule is complementary to $\im\Pi_{2,1}^+$, showing that
\begin{equation*}
 \cL_{2,1}\tens\cL_{2,1}\cong\cL_{1,1}\oplus(\cV_{3,1}/\cJ_+).
\end{equation*}
We have not yet shown that $\cV_{3,1}/\cJ_+=\cL_{3,1}$, but note that we have now shown that $\cL_{2,1}\tens\cL_{2,1}$ is not a logarithmic module, since $L_0$ acts semisimply on both $\cL_{1,1}$ and $\cV_{3,1}/\cJ_+$.

For showing that $\cJ_+=\cV_{1,1}$, we observe that the Verma submodule $\cV_{1,1}\subseteq\cV_{3,1}$ is generated by the degree-$3$ singular vector
\begin{equation*}
\widetilde{v}_{1,1} = (L_{-1}^3+4L_{-1}L_{-2}+2L_{-3})v_{3,1},
\end{equation*}
where $v_{3,1}$ is a singular vector generating $\cV_{3,1}$. Consequently, to complete the proof of the theorem, it is enough to show that $\Pi_2^+(\widetilde{v}_{1,1})=0$. To compute this vector in $(\cL_{2,1}\tens\cL_{2,1})(3)$, we begin with a lemma:
\begin{lem}\label{lem:nonzero}
 The vector $\pi_0(v_{2,1}\tens v_{2,1})\in(\cL_{2,1}\tens\cL_{2,1})(0)$ is non-zero.
\end{lem}
\begin{proof}
 From Proposition \ref{prop:basic_L21_fusion}, $(\cL_{2,1}\tens\cL_{2,1})(0)=(\cL_{2,1}\tens\cL_{2,1})_{[-3]}$ is spanned by
 \begin{equation*}
  5\,\pi_0(v_{2,1}\tens v_{2,1})-2\,\pi_0(L_{-1}v_{2,1}\tens v_{2,1}),
 \end{equation*}
while 
\begin{equation*}
 -\pi_0(v_{2,1}\tens v_{2,1})-2\,\pi_0(L_{-1}v_{2,1}\tens v_{2,1})=0.
\end{equation*}
The second expression shows that $\pi_0(v_{2,1}\tens v_{2,1})=0$ if and only if $\pi_0(L_{-1}v_{2,1}\tens v_{2,1})=0$. But the first shows that $\pi_0(v_{2,1}\tens v_{2,1})$ and  $\pi_0(L_{-1}v_{2,1}\tens v_{2,1})$ cannot both vanish, forcing $\pi_0(v_{2,1}\tens v_{2,1})\neq 0$.
\end{proof}

We may now assume $\Pi_2^+(v_{3,1})=\pi_0(v_{2,1}\tens v_{2,1})$, so that we need to show
\begin{equation}\label{eqn:sing_vect_vanish}
 (L_{-1}^3+4L_{-1}L_{-2}+2L_{-3})\pi_0(v_{2,1}\tens v_{2,1})=0.
\end{equation}
For this, we use another lemma:
\begin{lem}\label{lem:diff_eq}
 The series $\cY_\boxtimes(v_{2,1},x)v_{2,1}$ satisfies the formal differential equation
 \begin{equation*}
  \left( x^2\frac{d^2}{dx^2}-x\frac{d}{dx}-\frac{5}{4}\right)\cY_\boxtimes(v_{2,1},x)v_{2,1}=-\sum_{i\geq 1} L_{-i} x^i\cY_\boxtimes(v_{2,1},x)v_{2,1}.
 \end{equation*}
\end{lem}
\begin{proof}
 We use the $L_{-1}$-derivative property, the singular vector $(L_{-1}^2+L_{-2})v_{2,1}\in\cV_{2,1}$, the iterate formula for intertwining operators, the $L_{-1}$-commutator formula, and $h_{2,1}=-\frac{5}{4}$:
 \begin{align*}
  \frac{d^2}{dx^2} & \cY_\boxtimes(v_{2,1}, x)  v_{2,1}\nonumber\\  &=\cY_\tens(L_{-1}^2 v_{2,1},x)v_{2,1} =-\cY_\tens(L_{-2} v_{2,1},x)v_{2,1}\nonumber\\
  & =-\sum_{i\geq 0} (-1)^i\binom{-1}{i}\left(L_{-2-i} x^i\cY_\boxtimes(v_{2,1},x)v_{2,1}+x^{-1-i}\cY_\tens(v_{2,1},x)L_{i-1}v_{2,1}\right)\nonumber\\
& =  -\sum_{i\geq 2} \left(L_{-i} x^{i-2}\cY_\tens(v_{2,1},x)v_{2,1}\right)-\cY_\tens(v_{2,1},x)\left(x^{-1}L_{-1}+x^{-2}L_0\right)v_{2,1}\nonumber\\
& =-\sum_{i\geq 1} L_{-i} x^{i-2}\cY_\tens(v_{2,1},x)v_{2,1}+x^{-1}\frac{d}{dx}\cY_\tens(v_{2,1},x)v_{2,1}+\frac{5}{4} x^{-2}\cY_\tens(v_{2,1},x)v_{2,1}.
 \end{align*}
The lemma then follows from multiplying both sides by $x^2$ and rearranging terms.
\end{proof}

Now since $\cL_{2,1}\tens\cL_{2,1}$ is not logarithmic, the $L_0$-conjugation formula implies we can write
\begin{align*}
 \cY_\tens(v_{1,2},x)v_{1,2}& = x^{L_0-2h_{2,1}}(v_{2,1}\tens v_{2,1})\nonumber\\ 
 &=\sum_{n\geq 0} \pi_n(v_{2,1}\tens v_{2,1})\, x^{n+h_{3,1}-2h_{2,1}} =\sum_{ n\geq 0}\pi_n(v_{2,1}\tens v_{2,1})\,x^{n-1/2},
\end{align*}
with $\pi_0(v_{2,1}\tens v_{2,1})\neq 0$ by Lemma \ref{lem:nonzero}. If we insert this series into the differential equation of Lemma \ref{lem:diff_eq} and extract coefficients of powers of $x$, we get
\begin{equation*}
 n(n-3)\pi_n(v_{2,1}\tens v_{2,1}) =-\sum_{i=1}^n L_{-i} \pi_{n-i}(v_{2,1}\tens v_{2,1})
\end{equation*}
for all $n\in\NN$. Taking $n=1,2$ first, we get
\begin{align*}
 \pi_1(v_{2,1}\tens v_{2,1}) =\frac{1}{2}L_{-1}\pi_0(v_{2,1}\tens v_{2,1}),\quad \pi_2(v_{2,1}\tens v_{2,1}) = \left(\frac{1}{4}L_{-1}^2 +\frac{1}{2}L_{-2} \right)\pi_0(v_{2,1}\tens v_{2,1}).
\end{align*}
Then taking $n=3$, we get
\begin{align*}
 0 & = -L_{-1}\pi_2(v_{2,1}\tens v_{2,1})-L_{-2}\pi_1(v_{2,1}\tens v_{2,1})-L_{-3}\pi_0(v_{2,1}\tens v_{2,1})\nonumber\\
 & = \left(-\frac{1}{4}L_{-1}^3-\frac{1}{2}(L_{-1}L_{-2}+L_{-2}L_{-1})-L_{-3}\right)\pi_0(v_{2,1}\tens v_{2,1})\nonumber\\
 & =\left(-\frac{1}{4}L_{-1}^3-L_{-1}L_{-2}-\frac{1}{2}L_{-3}\right)\pi_0(v_{2,1}\tens v_{2,1}).
\end{align*}
This proves \eqref{eqn:sing_vect_vanish} and thus also the theorem.
\end{proof}

\section{Rigidity for \texorpdfstring{$\cL_{2,1}$}{L{2,1}}}\label{sec:L21_rigidity}

In this section, we show that the self-contragredient $V_{25}$-module $\cL_{2,1}$ is rigid and self-dual. The proof is similar to those in \cite[Section 4.2]{CMY2} and \cite[Section 4.1]{MY}, but it is more computationally intensive because $0$ is not the lowest conformal weight of $\cL_{2,1}\tens\cL_{2,1}$.
\begin{thm}\label{thm:L21_rigid}
 The irreducible module $\cL_{2,1}$ in $\cO_{25}$ is rigid and self-dual.
\end{thm}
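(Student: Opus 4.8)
The plan is to establish rigidity of $\cL_{2,1}$ by exhibiting explicit coevaluation and evaluation morphisms and verifying the two zig-zag (rigidity) identities, following the strategy of \cite[Section 4.2]{CMY2} and \cite[Section 4.1]{MY} but adapted to the fact that the relevant pairing appears three levels above the bottom of $\cL_{2,1}\tens\cL_{2,1}$. Concretely, since $\cL_{2,1}$ is self-contragredient, I would take as candidate coevaluation a map $i\colon V_{25}\to\cL_{2,1}\tens\cL_{2,1}$ landing in the $\cL_{1,1}$ summand guaranteed by Theorem \ref{thm:L21_L21_tens}, and as candidate evaluation a map $e\colon\cL_{2,1}\tens\cL_{2,1}\to V_{25}$ given by (a suitable normalization of) the unique-up-to-scalar intertwining operator of type $\binom{V_{25}}{\cL_{2,1}\,\cL_{2,1}}$ coming from \cite{OH}, equivalently the projection onto the $\cL_{1,1}$ direct summand composed with an isomorphism $\cL_{1,1}\cong V_{25}$. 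The existence and one-dimensionality of both $\hom(V_{25},\cL_{2,1}\tens\cL_{2,1})$ and $\hom(\cL_{2,1}\tens\cL_{2,1},V_{25})$ follows from Theorem \ref{thm:L21_L21_tens} together with simplicity of $\cL_{1,1}=V_{25}$, so the only real content is that the scalars produced by the two compositions $(\mathrm{Id}_{\cL_{2,1}}\otimes e)\circ(i\otimes\mathrm{Id}_{\cL_{2,1}})$ and $(e\otimes\mathrm{Id}_{\cL_{2,1}})\circ(\mathrm{Id}_{\cL_{2,1}}\otimes i)$ (read through the appropriate unit and associativity isomorphisms) are \emph{nonzero}; one can then rescale $i$ to normalize them to $\mathrm{Id}_{\cL_{2,1}}$, and self-duality is automatic since the module is self-contragredient.

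The computation of these scalars is where the work lies. I would translate the composite $(\mathrm{Id}\otimes e)\circ(i\otimes\mathrm{Id})$ into an equality of products of intertwining operators: writing $\cY_i$ for the intertwining operator of type $\binom{\cL_{2,1}\tens\cL_{2,1}}{V_{25}\,V_{25}}$ or rather the relevant one built from $i$, and $\cY_e$ from $e$, the rigidity scalar is extracted from a four-point function $\langle v_{2,1}', \cY_e(\cdot,x_1)\cY_i(\cdot,x_2)v_{2,1}\rangle$ by analytic continuation from the region $|x_1|>|x_2|$ to $|x_2|>|x_1|$ and comparison of leading coefficients. Because the ``vacuum-type'' vector $v_{2,1}\tens v_{2,1}$ sits three levels above the minimal conformal weight $h_{3,1}$ of $\cL_{2,1}\tens\cL_{2,1}$, this four-point function is not simply a power of $x_1-x_2$ but a genuine solution of a BPZ-type ODE; I would derive this ODE from the level-two singular vector $(L_{-1}^2+L_{-2})v_{2,1}\in\cV_{2,1}$ (exactly as in Lemma \ref{lem:diff_eq}), solve it in terms of hypergeometric functions, identify the relevant connection coefficients relating the $x_2\to 0$ and $x_1\to 0$ expansions, and read off the product of the two rigidity scalars from the product of these connection coefficients together with the known normalization data (including $h_{2,1}=-\tfrac54$ and $h_{3,1}$). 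The Lemmas already in place — in particular Lemma \ref{lem:nonzero} ($\pi_0(v_{2,1}\tens v_{2,1})\neq 0$) and the recursion for $\pi_n(v_{2,1}\tens v_{2,1})$ from the proof of Theorem \ref{thm:L21_L21_tens} — feed directly into pinning down the initial conditions of this ODE and hence into showing the connection coefficients do not vanish.

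The main obstacle I anticipate is precisely this nonvanishing of the product of rigidity scalars: hypergeometric connection coefficients are ratios of Gamma functions, and one must check that none of the arguments hits a pole or a zero at the specific values of $h_{r,1}$ occurring here. I would handle this by computing the monodromy/connection matrix explicitly for the degenerate hypergeometric equation attached to $\cY_\tens(v_{2,1},x)v_{2,1}$, using the explicit exponents $n-\tfrac12$ visible in the series expansion written just before the end of the proof of Theorem \ref{thm:L21_L21_tens}, and then verifying directly that the product of the two entries controlling the zig-zag composites is a nonzero rational multiple of a ratio of Gamma values that is manifestly finite and nonzero. A secondary, more bookkeeping-type obstacle is keeping track of the unit, associativity, and braiding isomorphisms in Huang--Lepowsky--Zhang's formalism so that the ``scalar'' really is the coefficient one computes from the four-point function; here I would follow the conventions of \cite[Section 3.3]{CKM1} and the parallel computations in \cite{CMY2, MY} essentially verbatim, since nothing about $c=25$ changes that part of the argument. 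Once both zig-zag identities hold up to the same nonzero scalar, rescaling $i$ finishes the proof that $\cL_{2,1}$ is rigid, and self-duality follows from $\cL_{2,1}\cong\cL_{2,1}'$.
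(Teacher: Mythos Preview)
Your overall strategy matches the paper's: take coevaluation and evaluation as the inclusion and projection for the $\cL_{1,1}$ summand of $\cL_{2,1}\tens\cL_{2,1}$, reduce to showing a single rigidity scalar $\mathfrak{R}$ is nonzero (since $\cL_{2,1}$ is simple and self-contragredient, one zig-zag suffices by \cite[Lemma 4.2.1, Corollary 4.2.2]{CMY3}), and analyze the relevant four-point correlator via the BPZ equation coming from $(L_{-1}^2+L_{-2})v_{2,1}=0$. So far so good.

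The gap is in where you locate the hard step. You expect the nonvanishing to come down to a Gamma-function check on hypergeometric connection coefficients. In fact the hypergeometric equation here is degenerate: after the substitution $\phi(x)=x^{5/2}(1-x)^{5/2}f(x)$ one gets $x(1-x)f''+4(1-2x)f'-10f=0$, whose fundamental solutions are elementary, namely $\phi_1(x)=x^{-1/2}(1-x)^{5/2}(1+x)$ and $\phi_2(x)=x^{5/2}(1-x)^{-1/2}(1-\tfrac12 x)$. The iterate channel (expansion near $x=1$, coming from the associativity isomorphism, not from swapping $|x_1|>|x_2|$ to $|x_2|>|x_1|$ as you wrote) is forced to be $\tfrac12\phi_1$ alone. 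So the connection matrix by itself only tells you $\mathfrak{R}+b=0$, where $b$ is the coefficient of $\phi_2$ in the four-point function through the $\cL_{3,1}$ channel, $\langle v_{2,1},\cY^2_{23}(v_{2,1},1)\cY^3_{22}(v_{2,1},x)v_{2,1}\rangle=\tfrac12\phi_1(x)+b\,\phi_2(x)$. No Gamma identity will hand you $b$.

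To get $b$ you must compute the coefficient $c_3$ of $x^{5/2}$ in that series, and this is \emph{not} determined by the ODE from the leading coefficient $c_0=\tfrac12$ (the indicial exponents $-\tfrac12$ and $\tfrac52$ differ by $3$, so $c_3$ is the free second parameter). The paper extracts $c_3$ by an explicit level-$3$ Virasoro computation inside $\cL_{3,1}$: one writes $\pi_3(\cY^3_{22}(v_{2,1},1)v_{2,1})$ in the basis $\{L_{-3}v_{3,1},\,L_{-1}L_{-2}v_{3,1}\}$ of $(\cL_{3,1})_{[0]}$ by computing the Gram matrix of the contragredient pairing and inverting it, then pairs the result with $v_{2,1}$ through $\cY^2_{23}$ using the commutator formula. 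This yields $c_3=\tfrac{9}{32}$, hence $b=c_3-\tfrac{25}{32}=-\tfrac12$ and $\mathfrak{R}=\tfrac12\neq 0$. Your plan mentions the recursion for $\pi_n(v_{2,1}\tens v_{2,1})$ from Theorem \ref{thm:L21_L21_tens}, but that recursion lives in the full tensor product and is exactly what breaks down at $n=3$; the missing input is this dual-basis computation in the $\cL_{3,1}$ summand. Once you add that step, your outline becomes the paper's proof.
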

\begin{proof}
The direct sum decomposition of $\cL_{2,1}\tens\cL_{2,1}$ from Theorem \ref{thm:L21_L21_tens} shows that for $s=1,3$, we have maps $i_s:\cL_{s,1}\rightarrow\cL_{2,1}\tens\cL_{2,1}$ and $p_s:\cL_{2,1}\tens\cL_{2,1}\rightarrow\cL_{s,1}$ such that
\begin{equation}\label{eqn:c25_coev_ev}
 p_s\circ i_{s'}=\delta_{s,s'}\Id_{\cL_{s,1}}\quad\text{and}\quad i_1\circ p_1+i_3\circ p_3=\Id_{\cL_{2,1}\tens\cL_{2,1}}.
\end{equation}
We take $i_1$ and $p_1$ as preliminary candidates for coevaluation and evaluation, respectively. Then it suffices to show that the rigidity composition
\begin{equation*}
 \cL_{2,1}\xrightarrow{r^{-1}}\cL_{2,1}\tens\cL_{1,1}\xrightarrow{\Id\tens i_1}\cL_{2,1}\tens(\cL_{2,1}\tens\cL_{2,1})\xrightarrow{\cA}(\cL_{2,1}\tens\cL_{2,1})\tens\cL_{2,1}\xrightarrow{p_1\tens\Id}\cL_{1,1}\tens\cL_{2,1}\xrightarrow{l}\cL_{2,1}
\end{equation*}
is non-zero (see for example Lemma 4.2.1 and Corollary 4.2.2 of \cite{CMY3}). Since $\cL_{2,1}$ is simple, the rigidity composition is a scalar multiple $\mathfrak{R}\cdot\Id_{\cL_{2,1}}$, so we need to show that $\mathfrak{R}\neq 0$.

For $s=1,3$, we define intertwining operators $\cY^s_{22}:=p_s\circ\cY_\boxtimes$ of type $\binom{\cL_{s,1}}{\cL_{2,1}\,\cL_{2,1}}$ and 
\begin{equation*}
 \cY^2_{2s}:= l\circ(p_1\tens\Id)\circ\cA\circ(\Id\tens i_s)\circ\cY_\tens=[l\circ(p_1\tens\Id)\circ\cA\circ\cY_\tens]\circ(\Id\otimes i_s)
\end{equation*}
of type $\binom{\cL_{2,1}}{\cL_{2,1}\,\cL_{s,1}}$. In particular,
$$\cY^2_{21}=\mathfrak{R}\cdot(r\circ\cY_\boxtimes)=\mathfrak{R}\cdot\Omega(Y_{\cL_{2,1}}),$$
where $\Omega(Y_{\cL_{2,1}})$ is the intertwining operator of type $\binom{\cL_{2,1}}{\cL_{2,1}\,\cL_{1,1}}$ related to the vertex operator $Y_{\cL_{2,1}}$ by skew-symmetry. We now use \eqref{eqn:c25_coev_ev} and the definitions of the unit and associativity isomorphisms in $\cO_{25}$ to calculate
\begin{align*}
 \mathfrak{R} \cdot\langle v_{2,1}, &\,\Omega(Y_{\cL_{2,1}})(v_{2,1},1)\cY^1_{22}(v_{2,1},x)v_{2,1}\rangle +\langle v_{2,1},\cY^2_{23}(v_{2,1},1)\cY^3_{22}(v_{2,1},x)v_{2,1}\rangle\nonumber\\
 & =\langle v_{2,1},\cY^2_{21}(v_{2,1},1)\cY^1_{22}(v_{2,1},x)v_{2,1}\rangle+\langle v_{2,1},\cY^2_{23}(v_{2,1},1)\cY^3_{22}(v_{2,1},x)v_{2,1}\rangle\nonumber\\
 & =\langle v_{2,1},[l\circ(p_1\tens\Id)\circ\cA\circ\cY_\boxtimes](v_{2,1},1)\cY_\tens(v_{2,1},x)v_{2,1}\rangle\nonumber\\
 & =\langle v_{2,1},[l\circ(p_1\tens\Id)\circ\cY_\tens](\cY_\tens(v_{2,1},1-x)v_{2,1},x)v_{2,1}\rangle\nonumber\\
 & =\langle v_{2,1}, Y_{\cL_{1,2}}(\cY_{22}^1(v_{2,1},1-x)v_{2,1},x)v_{2,1}\rangle,
\end{align*}
where we may take $x$ to be, for example, any real number in the interval $(\frac{1}{2},1)$. We can rescale $p_1$ (and correspondingly $i_1$) if necessary to ensure the following:
\begin{align}
\langle v_{2,1}, \Omega(Y_{\cL_{2,1}})(v_{2,1},1)\cY^1_{22}(v_{2,1},x)v_{2,1}\rangle &\in x^{-2h_{2,1}}\big( 1+ x\,\CC[[x]]\big),\label{phi2}\\
\langle v_{2,1},\cY^2_{23}(v_{2,1},1)\cY^3_{22}(v_{2,1},x)v_{2,1}\rangle & \in x^{h_{3,1}-2h_{2,1}}\CC[[x]],\label{phi1}\\
 \langle v_{2,1}, Y_{\cL_{1,2}}(\cY_{22}^1(v_{2,1},1-x)v_{2,1},x)v_{2,1}\rangle &\in (1-x)^{-2h_{2,1}}\left(1+\frac{1-x}{x}\CC\left[\left[\frac{1-x}{x}\right]\right]\right),\label{psi}
\end{align}
where $h_{2,1}=-\frac{5}{4}$ and $h_{3,1}=-3$.

Now, as first shown in \cite{BPZ} (see also \cite{Hu_Vir_tens, TW, CMY2}), products and iterates of intertwining operators such as \eqref{phi2}, \eqref{phi1}, and \eqref{psi} are solutions to a differential equation.  For central charge $25=13-6(-1)-6(-1)^{-1}$, with $h_{2,1}=-\frac{5}{4}$, the specific equation satisfied by \eqref{phi2}, \eqref{phi1}, and \eqref{psi} is 
\begin{equation*}
 x(1-x)\phi''(x)-(1-2x)\phi'(x)-\frac{5}{4} x^{-1}(1-x)^{-1}\phi(x)=0;
\end{equation*}
for a detailed derivation, see for example \cite[Proposition 4.1.2]{CMY2}, where one should set $p$ and $h_{1,2}$ (in the notation of that proposition) equal to $-1$ and $-\frac{5}{4}$, respectively. If we write
\begin{equation*}
 \phi(x)=x^{-2h_{2,1}}(1-x)^{-2h_{2,1}}f(x),
\end{equation*}
then $f(x)$ satisfies the hypergeometric differential equation
\begin{equation*}
 x(1-x)f''(x)+4(1-2x)f'(x)-10f(x)=0.
\end{equation*}
One solution to the hypergeometric equation near the singularity $x=0$ is
\begin{equation*}
 f_1(x)=x^{-3}+x^{-2},
\end{equation*}
and then because the differential equation is symmetric under the interchange $x\leftrightarrow 1-x$, the second fundamental solution is
\begin{equation*}
f_2(x) = \frac{1}{2}(1-x)^{-3}+\frac{1}{2}(1-x)^{-2} =\frac{1-\frac{1}{2}x}{(1-x)^3}.
\end{equation*}
Thus the original differential equation has the following basis of solutions near $x=0$:
\begin{align*}
\phi_1(x)& =x^{5/2}(1-x)^{5/2}(x^{-3}+x^{-2}) =x^{-1/2}(1-x)^{5/2}(1+x)\\
\phi_2(x)& =x^{5/2}(1-x)^{5/2}\cdot\frac{1-\frac{1}{2}x}{(1-x)^3} = x^{5/2}(1-x)^{-1/2}\left(1-\frac{1}{2}x\right).
\end{align*}
Then it is clear that \eqref{phi2} is $\phi_2(x)$ and \eqref{phi1} is some linear combination $a\,\phi_1(x)+b\,\phi_2(x)$, so
\begin{equation*}
 \langle v_{2,1}, Y_{\cL_{1,2}}(\cY_{22}^1(v_{2,1},1-x)v_{2,1},x)v_{2,1}\rangle =a\,\phi_1(x)+(\mathfrak{R}+b)\phi_2(x)
\end{equation*}
for $x$ such that both sides converge. Then from \eqref{psi}, the iterate on the left side here is a solution $\psi(x)$ to the differential equation such that $(1-x)^{-5/2}\psi(x)$ is analytic near $x=1$, with constant term $1$ when expanded as a series in $\frac{1-x}{x}$. The only solution with this property is $\frac{1}{2}\phi_1(x)$, so we may conclude that $a=\frac{1}{2}$ and $\mathfrak{R}+b=0$.

Now to show $\mathfrak{R}\neq 0$ and thus prove that $\cL_{2,1}$ is rigid, we need to compute $b$ and show that it is non-zero. We first note that from the $L_0$-conjugation formula,
\begin{align*}
 \frac{1}{2}\phi_1(x)+b\,\phi_2(x) & =\left\langle v_{2,1},\cY_{23}^2(v_{2,1},1)\cY_{22}^3(v_{2,1},x)v_{2,1}\right\rangle\nonumber\\
 & =\left\langle v_{2,1},\cY^2_{23}(v_{2,1},1)x^{L_0-2h_{2,1}}\cY^3_{22}(v_{2,1},1)v_{2,1}\right\rangle\nonumber\\
 & =\sum_{n\geq 0}\left\langle v_{2,1},\cY^2_{23}(v_{2,1},1)\pi_n(\cY^3_{22}(v_{2,1},1)v_{2,1})\right\rangle x^{n-1/2}.
\end{align*}
Denote $c_n=\left\langle v_{2,1},\cY^2_{23}(v_{2,1},1)\pi_n(\cY^3_{22}(v_{2,1},1)v_{2,1})\right\rangle$; from the series expansions of $\phi_1(x)$ and $\phi_2(x)$, we have $c_0=\frac{1}{2}$ and $c_3=\frac{25}{32}+b$. To determine $b$, we will compute $c_3$ from $c_0$ using properties of intertwining operators.

We first compute $\pi_3(\cY_{22}^3(v_{2,1},1)v_{2,1})$ in terms of $v_{3,1}=\pi_0(\cY_{22}^3(v_{2,1},1)v_{2,1})$. Let $\langle\cdot,\cdot\rangle$ now denote the unique invariant bilinear form on $\cL_{3,1}$ such that $\langle v_{3,1},v_{3,1}\rangle =1$; it is nondegenerate on the subspace $(\cL_{3,1})(3)=(\cL_{3,1})_{[0]}$ which has basis 
$$\lbrace L_{-3}v_{3,1},L_{-1}L_{-2}v_{3,1}\rbrace$$ 
since $(L_{-1}^3+4L_{-1}L_{-2}+2L_{-3})v_{3,1}=0$ in $\cL_{3,1}$. Now using the commutator formula \eqref{eqn:Vir_comm_form},
\begin{align*}
 \left\langle L_{-3} v_{3,1}, \cY^3_{22}(v_{2,1},1)v_{2,1}\right\rangle & =\left\langle v_{3,1}, L_3\cY^3_{22}(v_{2,1},1)v_{2,1}\right\rangle\nonumber\\
 & =\left\langle v_{3,1},\cY^3_{22}((L_{-1}+4L_0)v_{2,1},1)v_{2,1}\right\rangle\nonumber\\
 & =\left\langle v_{3,1},\cY^3_{22}((L_{-1}+L_0)v_{2,1},1)v_{2,1}\right\rangle +3h_{2,1}\left\langle v_{3,1},\cY^3_{22}(v_{2,1},1)v_{2,1}\right\rangle\nonumber\\
 & =\left\langle v_{3,1},[L_0,\cY^3_{22}(v_{2,1},1)]v_{2,1}\right\rangle+3h_{2,1}\langle v_{3,1},v_{3,1}\rangle\nonumber\\
 & =h_{3,1}+2h_{2,1} = -\frac{11}{2}
\end{align*}
and similarly,
\begin{align*}
 \left\langle L_{-1}L_{-2}v_{3,1},\cY^3_{22}(v_{2,1},1)v_{2,1}\right\rangle & =\left\langle L_{-2}v_{3,1},\cY^3_{22}((L_{-1}+2L_0)v_{2,1},1)v_{2,1}\right\rangle\nonumber\\
 & =\left\langle L_{-2}v_{3,1},\left[(\mathrm{ad}\, L_0+h_{2,1})\cY^3_{22}(v_{2,1},1)\right]v_{2,1}\right\rangle\nonumber\\
 &=(h_{3,1}+2)\left\langle L_{-2} v_{3,1},\cY^3_{22}(v_{2,1},1)v_{2,1}\right\rangle\nonumber\\
 & = -\left\langle v_{3,1},\cY^3_{22}((L_{-1}+3L_0)v_{2,1},1)v_{2,1}\right\rangle\nonumber\\
 & = -\left\langle v_{3,1},\left[(\mathrm{ad}\,L_0+2h_{2,1})\cY^3_{22}(v_{2,1},1)\right]v_{2,1}\right\rangle\nonumber\\
 & = -(h_{3,1}+h_{2,1}) =\frac{17}{4}.
\end{align*}
These show how to expand $\pi_3(\cY^3_{22}(v_{2,1},1)v_{2,1})$ in the basis dual to $\lbrace L_{-3}v_{3,1},L_{-1}L_{-2}v_{3,1}\rbrace$.

To determine the dual basis vectors, we can use the Virasoro algebra commutation relations to calculate
\begin{align*}
& \langle L_{-3} v_{3,1},L_{-3}v_{3,1}\rangle = 32, & \langle L_{-3} v_{3,1},L_{-1}L_{-2}v_{3,1}\rangle =2,\hspace{3em} & \\
 &\langle L_{-1}L_{-2}v_{3,1},L_{-3}v_{3,1}\rangle =2, & \langle L_{-1}L_{-2}v_{3,1},L_{-1}L_{-2}v_{3,1}\rangle =-55. &
\end{align*}
From these, it is easy to show that the dual basis is
\begin{equation*}
 [L_{-3}v_{3,1}]^* =\frac{55}{1764}L_{-3}v_{3,1}+\frac{1}{882} L_{-1}L_{-2}v_{3,1},\quad[L_{-1}L_{-2}v_{3,1}]^* =\frac{1}{882} L_{-3}v_{3,1}-\frac{8}{441}L_{-1}L_{-2}v_{3,1},
\end{equation*}
and thus
\begin{align*}
 \pi_3(\cY^3_{22}(v_{2,1},1)v_{2,1} & =-\frac{11}{2}\left(\frac{55}{1764}L_{-3}v_{3,1}+\frac{1}{882} L_{-1}L_{-2}v_{3,1}\right)\nonumber\\
 &\hspace{5em}+\frac{17}{4}\left(\frac{1}{882} L_{-3}v_{3,1}-\frac{8}{441}L_{-1}L_{-2}v_{3,1}\right)\nonumber\\
 &= -\frac{1}{6}L_{-3}v_{3,1}-\frac{1}{12} L_{-1}L_{-2}v_{3,1}.
\end{align*}
Now we can compute $c_3$ using the commutator formula for intertwining maps:
\begin{align*}
 c_3 & =\left\langle v_{2,1},\cY^2_{23}(v_{2,1},1)\pi_3(\cY^3_{22}(v_{2,1},1)v_{2,1})\right\rangle\nonumber\\
 & =-\frac{1}{12}\left\langle v_{2,1},\cY^2_{23}(v_{2,1},1)(2L_{-3}+L_{-1}L_{-2})v_{3,1}\right\rangle\nonumber\\
 & =\frac{1}{12}\left\langle v_{2,1},2\,\cY_{23}^2((L_{-1}-2L_0)v_{2,1},1)v_{3,1}+\cY^2_{23}(L_{-1}v_{2,1},1)L_{-2}v_{3,1} \right\rangle\nonumber\\
 & =\frac{1}{12}\left\langle v_{2,1}, 2\left[(\mathrm{ad}\,L_0-3h_{2,1})\cY^2_{23}(v_{2,1},1)\right]v_{3,1} +\left[(\mathrm{ad}\,L_0-h_{2,1})\cY^2_{23}(v_{2,1},1)\right]L_{-2}v_{3,1} \right\rangle\nonumber\\
 & =\frac{1}{6}(-2h_{2,1}-h_{3,1})\langle v_{2,1}\cY^2_{23}(v_{2,1},1)v_{3,1}\rangle-\frac{1}{12}(h_{3,1}+2)\left\langle v_{2,1},\cY^2_{23}(v_{2,1},1)L_{-2}v_{3,1}\right\rangle\nonumber\\
 & =\frac{11}{12}c_0-\frac{1}{12}\left\langle v_{2,1},\cY^2_{23}((L_{-1}-L_0)v_{2,1},1)v_{3,1}\right\rangle\nonumber\\
 & =\frac{11}{24}-\frac{1}{12}\left\langle v_{2,1},\left[(\mathrm{ad}\,L_0-2h_{2,1})\cY^2_{23}(v_{2,1},1)\right]v_{3,1}\right\rangle\nonumber\\
 & =\frac{11}{24}-\frac{1}{12}(-h_{2,1}-h_{3,1})c_0\nonumber\\
 & =\frac{11}{24}-\frac{17}{96}=\frac{9}{32}.
\end{align*}
Finally, we can compute $b$ and thus also the rigidity scalar $\mathfrak{R}$:
\begin{equation*}
 \mathfrak{R}=-b=\frac{25}{32}-\frac{9}{32}=\frac{1}{2}.
\end{equation*}
Thus $\cL_{2,1}$ is rigid and self-dual with evaluation $2\,p_1$ and coevaluation $i_1$.
\end{proof}

\section{Tensor products and rigidity}\label{sec:tens_prods_and_rigidity}

In this section we determine tensor products of irreducible modules in $\cO_{25}$, and we prove that $\cO_{25}$ is rigid. To prove the first theorem, we use the exact sequence \eqref{eqn:c25_tens_prod_exact} and the rigidity of $\cL_{2,1}$ proved in Theorem \ref{thm:L21_rigid}:
\begin{thm}
 The irreducible module $\cL_{r,1}$ is rigid in $\cO_{25}$ for all $r\in\ZZ_+$, and we have
 \begin{equation}\label{eqn:sl2_fusion}
  \cL_{r,1}\tens\cL_{r',1}\cong\bigoplus_{\substack{k=\vert r-r'\vert+1\\ k+r+r'\equiv 1\,(\mathrm{mod}\,2)}}^{r+r'-1} \cL_{k,1}
 \end{equation}
for $r,r'\in\ZZ_+$.
\end{thm}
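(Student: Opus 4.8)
The plan is to bootstrap from the two known facts: the $\mathfrak{sl}_2$-type fusion rule $\cL_{2,1}\tens\cL_{2,1}\cong\cL_{1,1}\oplus\cL_{3,1}$ (Theorem \ref{thm:L21_L21_tens}) together with rigidity and self-duality of $\cL_{2,1}$ (Theorem \ref{thm:L21_rigid}), and the exact sequence \eqref{eqn:c25_tens_prod_exact} describing $\cL_{2,1}\tens\cL_{r,1}$ for $r>1$. The first step is to upgrade \eqref{eqn:c25_tens_prod_exact} to a genuine direct sum decomposition $\cL_{2,1}\tens\cL_{r,1}\cong\cL_{r-1,1}\oplus\cL_{r+1,1}$ for all $r\geq 2$. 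I would do this by induction on $r$: applying $\cL_{2,1}\tens(-)$ to the (already established) decomposition $\cL_{2,1}\tens\cL_{r-1,1}\cong\cL_{r-2,1}\oplus\cL_{r,1}$ and using that $\cL_{2,1}\tens(\cL_{2,1}\tens\cL_{r-1,1})\cong(\cL_{2,1}\tens\cL_{2,1})\tens\cL_{r-1,1}\cong\cL_{r-1,1}\oplus(\cL_{2,1}\tens(\cL_{2,1}\tens\cL_{r-1,1}))$... more carefully: associativity gives $(\cL_{2,1}\tens\cL_{2,1})\tens\cL_{r-1,1}\cong \cL_{r-1,1}\oplus(\cL_{2,1}\tens\cL_{2,1}\tens\cL_{r-1,1})$ and on the other side $\cL_{2,1}\tens(\cL_{r-2,1}\oplus\cL_{r,1})\cong(\cL_{2,1}\tens\cL_{r-2,1})\oplus(\cL_{2,1}\tens\cL_{r,1})\cong\cL_{r-3,1}\oplus\cL_{r-1,1}\oplus(\cL_{2,1}\tens\cL_{r,1})$. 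Comparing composition factors (the middle terms of the exact sequences are known up to Verma quotients, and rigidity forces exactness of $\cL_{2,1}\tens(-)$ so it preserves direct sums) pins down $\cL_{2,1}\tens\cL_{r,1}$ as a direct sum of simple modules with the predicted composition factors $\cL_{r-1,1}\oplus\cL_{r+1,1}$; in particular each $\cL_{r,1}$ is a summand of an iterated tensor power of $\cL_{2,1}$, hence rigid.

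With $\cL_{2,1}\tens\cL_{r,1}$ known, I would establish \eqref{eqn:sl2_fusion} in full by a second induction, this time on $\min(r,r')$ (say on $r$, without loss of generality $r\le r'$). The base case $r=1$ is the unit, and $r=2$ is the decomposition just proved. For the inductive step, write $\cL_{r,1}$ as a summand of $\cL_{2,1}\tens\cL_{r-1,1}$ (using the $r-1$ case, $\cL_{2,1}\tens\cL_{r-1,1}\cong\cL_{r-2,1}\oplus\cL_{r,1}$, so $\cL_{r,1}$ is the "new" summand), and compute
\begin{equation*}
\cL_{2,1}\tens\cL_{r-1,1}\tens\cL_{r',1}\cong(\cL_{2,1}\tens\cL_{r-1,1})\tens\cL_{r',1}\cong(\cL_{r-2,1}\tens\cL_{r',1})\oplus(\cL_{r,1}\tens\cL_{r',1})
\end{equation*}
in two ways, the other being $\cL_{2,1}\tens(\cL_{r-1,1}\tens\cL_{r',1})$, which is known by the inductive hypothesis (the $r-1$ case applied to $\cL_{r-1,1}\tens\cL_{r',1}$) followed by the already-established $\cL_{2,1}\tens\cL_{k,1}$ formula. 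Subtracting the known summand $\cL_{r-2,1}\tens\cL_{r',1}$ (legitimate in a semisimple setting — all modules here are semisimple by Theorem \ref{thm:L21_L21_tens} and the first step, and cancellation of direct summands holds by Krull–Schmidt) isolates $\cL_{r,1}\tens\cL_{r',1}$, and the Clebsch–Gordan bookkeeping works out exactly as for $\mathfrak{sl}_2$: the telescoping $\{\,|r-1-r'|+1,\dots,r-1+r'-1\,\}$ twice, minus $\{\,|r-2-r'|+1,\dots,r-2+r'-1\,\}$, equals $\{\,|r-r'|+1,\dots,r+r'-1\,\}$ with the correct parity.

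The main obstacle, and the only place real care is needed, is the first step: promoting the exact sequence \eqref{eqn:c25_tens_prod_exact} to a split direct sum of simples. Two things must be checked. First, that $\cL_{2,1}\tens(-)$ is an exact functor — this follows from rigidity of $\cL_{2,1}$ (Theorem \ref{thm:L21_rigid}), since tensoring with a rigid object has a biadjoint (namely tensoring with the dual, which is again $\cL_{2,1}$), hence is exact and preserves finite direct sums. Second, that the module $\cL_{2,1}\tens\cL_{r,1}$, which a priori could be a nontrivial extension (e.g.\ involving self-extensions of some $\cL_{k,1}$ with $k\ge 2$, which do exist in $\cO_{25}$), is in fact semisimple with exactly the composition factors $\cL_{r-1,1},\cL_{r+1,1}$. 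The cleanest route is to run the induction so that $\cL_{2,1}\tens\cL_{r,1}$ appears as a direct summand of the semisimple module $(\cL_{2,1}\tens\cL_{2,1})\tens\cL_{r-1,1}\cong\cL_{r-1,1}\oplus\big(\cL_{2,1}\tens(\cL_{2,1}\tens\cL_{r-1,1})\big)$ with $\cL_{2,1}\tens\cL_{r-1,1}$ already known semisimple by induction; a summand of a semisimple module is semisimple, and then matching lowest conformal weights (from \eqref{eqn:c25_tens_prod_exact}, the composition factors are among $\{\cL_{r\pm1,1}\}$ with appropriate multiplicities, and the Verma-quotient structure plus surjections onto both $\cL_{r-1,1}$ and $\cL_{r+1,1}$ force multiplicity one each) yields $\cL_{2,1}\tens\cL_{r,1}\cong\cL_{r-1,1}\oplus\cL_{r+1,1}$. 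Once semisimplicity is in hand everywhere, the rest is the purely combinatorial Clebsch–Gordan induction above, with Krull–Schmidt cancellation doing the work.
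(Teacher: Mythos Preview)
Your overall architecture matches the paper's: first determine $\cL_{2,1}\tens\cL_{r,1}$ by induction on $r$ (and deduce rigidity of $\cL_{r,1}$ along the way), then prove the general formula \eqref{eqn:sl2_fusion} by a second induction using associativity and Krull--Schmidt. The second half of your proposal is essentially identical to the paper's argument and is fine.

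The gap is in the first half, precisely at the step you flagged as the main obstacle. Your ``summand of a semisimple module is semisimple'' argument is circular. You write that $\cL_{2,1}\tens\cL_{r,1}$ is a direct summand of $\cL_{2,1}\tens(\cL_{2,1}\tens\cL_{r-1,1})$, which is true; but to conclude this larger module is semisimple you need $\cL_{2,1}\tens\cL_{r,1}$ semisimple, since $\cL_{2,1}\tens(\cL_{2,1}\tens\cL_{r-1,1})\cong(\cL_{2,1}\tens\cL_{r-2,1})\oplus(\cL_{2,1}\tens\cL_{r,1})$ and only the first summand is known semisimple by induction. Your fallback claim that ``the composition factors are among $\{\cL_{r\pm 1,1}\}$'' is also not justified: the exact sequence \eqref{eqn:c25_tens_prod_exact} only tells you the pieces are proper Verma quotients $\cV_{r\pm 1,1}/\cJ_\pm$, and nothing you have written rules out, say, $\cJ_+=\cV_{r-3,1}$, giving extra composition factors (and a genuinely non-split extension).

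The paper closes this gap using \emph{self-duality}. Having established inductively that $\cL_{r,1}$ and $\cL_{r+1,1}$ are rigid (hence self-dual, since all simples are self-contragredient), the tensor product $\cL_{2,1}\tens\cL_{r+1,1}$ is itself rigid and self-dual. Then the known surjection $\cL_{2,1}\tens\cL_{r+1,1}\twoheadrightarrow\cL_{r+2,1}$ dualizes to an injection $\cL_{r+2,1}\hookrightarrow\cL_{2,1}\tens\cL_{r+1,1}$. This forces the lowest-weight submodule $\im\Pi_{r+1}^+$ to be exactly $\cL_{r+2,1}$, i.e.\ $\cJ_+$ is the maximal proper submodule. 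Since $\cL_{r+2,1}$ is now both a submodule and a quotient, the resulting short exact sequence splits. A second application of self-duality shows $\cL_{r,1}$ is also a submodule, and since its generating singular vector must lie in the complementary summand $\cV_{r,1}/\cJ_-$, this forces $\cJ_-$ to be maximal as well. This is the missing ingredient: rigidity is used not only to make $\cL_{2,1}\tens(-)$ exact, but to dualize quotients into submodules and thereby pin down the Verma kernels $\cJ_\pm$.
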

\begin{proof}
 We first show that $\cL_{r,1}$ is rigid and that
 \begin{equation*}
  \cL_{2,1}\tens\cL_{r,1}\cong\left\lbrace\begin{array}{lll}
                                           \cL_{2,1} & \text{if} & r=1\\
                                           \cL_{r-1,1}\oplus\cL_{r+1,1} & \text{if} & r\geq 2\\
                                          \end{array}
\right. 
 \end{equation*}
by induction on $r$. The base case $r=1$ is clear because $\cL_{1,1}$ is the unit object of $\cO_{25}$, and we have proved the case $r=2$ in Theorems \ref{thm:L21_L21_tens} and \ref{thm:L21_rigid}. 

Now for some $r\geq 2$, assume we have proved that $\cL_{r,1}$ is rigid and that $\cL_{2,1}\tens\cL_{r,1}\cong\cL_{r-1,1}\oplus\cL_{r+1,1}$. Then $\cL_{2,1}\tens\cL_{r,1}$ is also rigid, since it is the tensor product of rigid modules, which means that $\cL_{r+1,1}$ is rigid, since it is a direct summand of a rigid module. Now consider the module $\cL_{2,1}\tens\cL_{r+1,1}$, which is rigid and self-dual because $\cL_{2,1}$ and $\cL_{r+1,1}$ are rigid and self-dual. Self-duality implies that the surjection $\cL_{2,1}\tens\cL_{r+1,1}\rightarrow\cL_{r+2,1}$ (guaranteed by \cite{OH}) dualizes to an injection $\cL_{r+2,1}\rightarrow\cL_{2,1}\tens\cL_{r+1,1}$. Thus the one-dimensional lowest conformal weight space $(\cL_{2,1}\tens\cL_{r+1,1})(0)=(\cL_{2,1}\tens\cL_{r+1,1})_{[h_{r+2,1}]}$ generates a submodule isomorphic to $\cL_{r+2,1}$, showing that in the exact sequence \eqref{eqn:c25_tens_prod_exact}, the submodule $\cJ_+=\ker\Pi_{r+1}^{+}$ is the maximal proper submodule of $\cV_{r+2,1}$.

We now have an exact sequence
\begin{equation*}
 0\longrightarrow\cL_{r+2,1}\longrightarrow\cL_{2,1}\tens\cL_{r+1,1}\longrightarrow\cV_{r,1}/\cJ_-\longrightarrow 0,
\end{equation*}
which splits because $\cL_{r+2,1}$ is both a submodule and quotient of $\cL_{2,1}\tens\cL_{r+1,1}$. Now self-duality of $\cL_{2,1}\tens\cL_{r+1,1}$ again implies that $\cL_{r,1}$ is a submodule as well as a quotient of $\cL_{2,1}\tens\cL_{r+1,1}$. As any singular vector generating the submodule $\cL_{r,1}$ must be contained in the summand $\cV_{r,1}/\cJ_-$, this shows that $\cJ_-$ is the maximal proper submodule of $\cV_{r,1}$ and therefore $\cL_{2,1}\tens\cL_{r+1,1}\cong\cL_{r,1}\oplus\cL_{r+2,1}$. This completes the inductive step.

So far, we have proved that $\cL_{r,1}$ is rigid for all $r\in\ZZ_+$, as well as the $r=2$ case of \eqref{eqn:sl2_fusion}. (The $r=1$ case is clear because $\cL_{1,1}$ is the unit object of $\cO_{25}$.) We now prove the remaining cases of \eqref{eqn:sl2_fusion} by induction on $r$. Assuming the formula for some $r\geq 2$ and all $r'\in\ZZ_+$,
\begin{align*}
 (\cL_{r-1,1}\tens\cL_{r',1})\oplus  ( & \cL_{r+1,1}\tens\cL_{r',1})  \cong (\cL_{2,1}\tens\cL_{r,1})\tens\cL_{r',1}\nonumber\\
 &\cong\cL_{2,1}\tens(\cL_{r,1}\tens\cL_{r',1})\cong\bigoplus_{\substack{k=\vert r-r'\vert+1\\ k+r+r'\equiv 1\,\mathrm{mod}\, 2)}}^{r+r'-1} (\cL_{2,1}\tens\cL_{k,1}).
\end{align*}
Since all tensor product modules here have finite length, the Krull-Schmidt Theorem guarantees that we can find the indecomposable summands of $\cL_{r+1,1}\tens\cL_{r',1}$ by subtracting the indecomposable (and irreducible) summands of $\cL_{r-1,1}\tens\cL_{r',1}$ from those on the right side. Analyzing the three possibilities $r>r'$, $r=r'$, and $r<r'$, we then find that
\begin{align*}
\cL_{r+1,1}\tens\cL_{r',1}\cong \bigoplus_{\substack{k=\vert r+1-r'\vert+1\\ k+r+r'\equiv 0\,(\mathrm{mod}\,2)}}^{r+r'} \cL_{k,1}
\end{align*}
as required.
\end{proof}

By the preceding theorem, all simple modules in $\cO_{25}$ are rigid. It then follows from \cite[Theorem 4.4.1]{CMY2} that the entire category $\cO_{25}$ of finite-length modules is rigid:
\begin{thm}
 The tensor category $\cO_{25}$ of $C_1$-cofinite grading-restricted generalized $V_{25}$-modules is rigid. Moreover, $\cO_{25}$ is a braided ribbon tensor category with natural twist isomorphism $e^{2\pi i L_0}$.
\end{thm}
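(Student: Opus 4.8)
The plan is to deduce rigidity of all of $\cO_{25}$ from the rigidity of its simple objects, established in the preceding theorem, and then to produce the ribbon structure with twist $e^{2\pi i L_0}$. Recall from Section \ref{sec:prelim} (following \cite{CJORY}) that $\cO_{25}$ is a braided tensor category of grading-restricted generalized $V_{25}$-modules in the sense of \cite{HLZ8}, that every object of $\cO_{25}$ has finite length, and that $\cO_{25}$ is closed under the contragredient functor $W\mapsto W'$ of \cite{FHL}.

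For rigidity, since by the preceding theorem every simple module $\cL_{r,1}$ is rigid and self-dual, I would invoke \cite[Theorem 4.4.1]{CMY2}: for a braided tensor category of this type in which every object has finite length, rigidity of all simple objects forces rigidity of the whole category, the dual of an arbitrary module $W$ being its contragredient $W'$, with evaluation and coevaluation assembled from the canonical pairing $W'\times W\to\CC$ and the rigidity identities verified by d\'evissage along a composition series (this argument also yields exactness of $W\tens-$ for every $W$). Thus $\cO_{25}$ is rigid.

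To promote this to a ribbon structure, I would take $\theta_W:=e^{2\pi i L_0}$ on each object $W$; this is well defined because $L_0$ acts locally finitely (on a generalized $L_0$-eigenspace, $e^{2\pi i L_0}=e^{2\pi i h}e^{2\pi i(L_0-h)}$ with $e^{2\pi i(L_0-h)}$ a finite unipotent sum), it commutes with the $V_{25}$-action since $V_{25}$ is $\ZZ$-graded, and it commutes with all morphisms of $\cO_{25}$, so $\theta=\{\theta_W\}$ is a natural automorphism of the identity functor with $\theta_{V_{25}}=\Id$. The balancing axiom $\theta_{W_1\tens W_2}=(\theta_{W_1}\tens\theta_{W_2})\circ\cR_{W_2,W_1}\circ\cR_{W_1,W_2}$ is a general feature of the HLZ braided tensor category structure, arising from the description of the braiding via analytic continuation of compositions of intertwining maps along a path rotating one insertion point about the other, so that the double braiding realizes the monodromy $e^{2\pi i L_0}$ relative to the twists of the tensor factors; here I would cite \cite{HLZ8} (see also \cite[Section 3]{CKM1}). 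Finally, compatibility with duality, namely that the transpose of $\theta_W$ under the contragredient pairing equals $\theta_{W'}$, follows from $\langle L_0 w',w\rangle=\langle w',L_0 w\rangle$, hence $\langle e^{2\pi i L_0}w',w\rangle=\langle w',e^{2\pi i L_0}w\rangle$; combined with rigidity this gives $\theta_{W^\vee}=(\theta_W)^\vee$. These are precisely the axioms of a ribbon category, so $(\cO_{25},\cR,\theta)$ is a braided ribbon tensor category with twist $e^{2\pi i L_0}$.

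The substantive content of the theorem lies in the earlier sections — the rigidity of $\cL_{2,1}$ via the computation with BPZ equations and its inductive propagation to all $\cL_{r,1}$ — so the present step needs no new hard computation. The only points requiring care are checking that $\cO_{25}$ literally meets the hypotheses of \cite[Theorem 4.4.1]{CMY2} (all automatic from Section \ref{sec:prelim} and the preceding theorem) and recording the balancing axiom for $e^{2\pi i L_0}$, which is quoted from the general HLZ theory rather than reproved; everything else is immediate.
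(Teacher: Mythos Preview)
Your proposal is correct and follows essentially the same approach as the paper: invoke \cite[Theorem 4.4.1]{CMY2} to pass from rigidity of all simple objects (established in the preceding theorem) to rigidity of the entire finite-length category, and then record that the HLZ braided tensor structure together with the twist $e^{2\pi i L_0}$ gives a ribbon category. The paper's own proof is in fact terser than yours, stating only the first sentence and leaving the ribbon assertion as an immediate consequence of the general theory.
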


\section{The semisimple subcategory \texorpdfstring{$\cO_{25}^0$}{O25-0}}\label{sec:O_25_0}

The category $\cO_{25}$ is not semisimple: besides containing arbitrary-length indecomposable quotients of Verma modules and their contragredients, it contains logarithmic self-extensions of all irreducible modules $\cL_{r,1}$ for $r\geq 2$ (see \cite[Section 5.4]{GK}). As a consequence, $\cO_{25}$ does not have enough projectives, and we expect that indecomposable objects in $\cO_{25}$ are essentially unclassifiable. However, the tensor product formula \eqref{eqn:sl2_fusion} shows that the irreducible modules of $\cO_{25}$ form the simple objects of a semisimple tensor subcategory, which we label $\cO_{25}^0$. Moreover, the fusion ring of $\cO_{25}^0$ is isomorphic to that of the category $\rep\mathfrak{sl}_2$ of finite-dimensional $\mathfrak{sl}_2$-modules under the map that sends $\cL_{r,1}$, $r\in\ZZ_+$, to the $r$-dimensional irreducible $\mathfrak{sl}_2$-module $V(r-1)$. In the rest of this section, we determine how $\cO_{25}^0$ and $\rep\mathfrak{sl}_2$ are related as braided tensor categories.

For $q \in \mathbb{C}^{\times}$, let $U_q(\mathfrak{sl}_2)$ be Lusztig's modified form of the Drinfeld-Jimbo quantum group specialized at $q$, and
 let $\cC(q, \mathfrak{sl}_2)$ be the category of finite-dimensional weight  $U_q(\mathfrak{sl}_2)$-modules. When $q = \pm 1$ or $q$ is not a root of unity, $\cC(q, \mathfrak{sl}_2)$ is semisimple with a fusion ring isomorphic to that of $\rep\mathfrak{sl}_2$ \cite{L}. Moreover, the tensor category $\cC(q, \mathfrak{sl}_2)$ is generated by a $2$-dimensional simple representation $L_q(1)$, called the standard representation, in the sense that every simple object of $\cC(q, \mathfrak{sl}_2)$ is a subquotient of a tensor power of $L_q(1)$.
 
 Conversely, if $\cC$ is a rigid semisimple tensor category with a simple self-dual generating object $X$, and if tensor products in $\cC$ agree with those of $\rep\mathfrak{sl}_2$ under the identification $X \mapsto V(1)$, then \cite[Theorem~$A_{\infty}$]{KW} says that there exists $q_{\cC}\in\CC$, either $q_{\cC} = \pm 1$ or $q_{\cC}$ is not a root of unity, and with $q_\cC^2$ uniquely determined up to inversion, such that $\cC$ is tensor equivalent to $\cC(q_{\cC}, \mathfrak{sl}_2)^\tau$ under a functor that sends $X$ to $L_{q_{\cC}}(1)$. Here $\tau$ denotes modification of the associativity isomorphisms in $\cC(q_{\cC}, \mathfrak{sl}_2)$ by a $3$-cocycle on $\ZZ/2\ZZ$. Up to coboundaries, there is only one non-trivial $3$-cocycle $\tau$ on $\ZZ/2\ZZ$: it changes the usual associativity isomorphism $L_{q_\cC}(1)\otimes (L_{q_\cC}(1)\otimes L_{q_\cC}(1)) \rightarrow (L_{q_\cC}(1)\otimes L_{q_\cC}(1))\otimes L_{q_\cC}(1)$ in $\cC(q_{\cC}, \mathfrak{sl}_2)$ by a sign. Note that the number denoted $q_\cC$ in \cite{KW} is actually the square of the number we have denoted $q_\cC$ here.
 
 To determine $q_{\cC}$, we look at the evaluation $e_X$ and the coevaluation $i_X$ such that the rigidity compositions
\begin{equation}\label{eq:rigidity1}
 X \xrightarrow{r_X^{-1}} X\tens{\bf 1} \xrightarrow{\Id_X\tens i_X} X \tens(X\tens X)\xrightarrow{\cA_{X,X,X}}(X \tens X)\tens X \xrightarrow{e_X\tens\Id_X} {\bf 1}\tens X \xrightarrow{l_X} X
\end{equation}
and
\begin{equation}\label{eq:rigidity2}
 X \xrightarrow{l_X^{-1}} {\bf 1}\tens X \xrightarrow{i_X \tens \Id_X} (X\tens X) \tens X \xrightarrow{\cA_{X,X,X}^{-1}} X\tens (X \tens X) \xrightarrow{\Id_X \tens e_X} X \tens {\bf 1} \xrightarrow{r_X} X
\end{equation}
equal $\Id_X$. The intrinsic dimension $d(X) = e_X \circ i_X \in \mathbb{C}$ is an invariant of the tensor category structure on $\cC$. So because the intrinsic dimension of $L_{q_{\cC}}(1)$ in $\cC(q_{\cC}, \mathfrak{sl}_2)$ is $-q_{\cC} - q_{\cC}^{-1}$ (see \cite[Exercise 8.18.8]{EGNO}), we can determine $q_{\cC}$ by comparing with $d(X)$ in $\cC$. Moreover, since $\pm q_{\cC}$ square to the same number, \cite[Theorem~$A_{\infty}$]{KW} implies that $\cC(q_{\cC}, \mathfrak{sl}_2)$ is tensor equivalent to a $3$-cocycle twist of $\cC(-q_{\cC}, \mathfrak{sl}_2)$, and this cocycle has to be non-trivial because the intrinsic dimensions of $L_{q_{\cC}}(1)$ and $L_{-q_{\cC}}(1)$ differ by a sign. Therefore $\cC(q_{\cC}, \mathfrak{sl}_2) \cong \cC(-q_{\cC}, \mathfrak{sl}_2)^{\tau}$ as tensor categories.

Now we turn to $\cO_{25}^0$, which is tensor generated by the simple self-dual object $\cL_{2,1}$. By the fusion rules \eqref{eqn:sl2_fusion} and \cite[Theorem~$A_{\infty}$]{KW}, the category $\cO_{25}^0$ is tensor equivalent to either $\cC(q, \mathfrak{sl}_2)$ or $\cC(q', \mathfrak{sl}_2)^{\tau}$ for some $q, q'\in\CC^\times$. The conclusion of the proof of Theorem \ref{thm:L21_rigid} shows that $d(\cL_{2,1}) = 2$. So if $\cO_{25}^0 \cong \cC(q, \mathfrak{sl}_2)$, we have
\[
d(L_q(1)) = -q - q^{-1} = 2,
\]
and thus $q= -1$, while if $\cO_{25}^0 \cong \cC(q', \mathfrak{sl}_2)^{\tau}$, a similar calculation shows $q' = 1$. Since the category $\cC(1, \mathfrak{sl}_2)^{\tau}$ is tensor equivalent to $\cC(-1, \mathfrak{sl}_2)$, we have:
\begin{prop}\label{prop: tens-equiv}
There are tensor equivalences $\cO_{25}^0 \cong \cC(-1, \mathfrak{sl}_2) \cong \cC(1, \mathfrak{sl}_2)^{\tau} \cong (\rep \mathfrak{sl}_2)^\tau$. 
\end{prop}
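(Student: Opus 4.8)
The plan is to recognize $\cO_{25}^0$ as an instance of the Kazhdan--Wenzl classification and then use the categorical dimension of $\cL_{2,1}$ to pin down which instance it is. First I would check the hypotheses of \cite[Theorem~$A_\infty$]{KW}: the category $\cO_{25}^0$ is semisimple and closed under tensor products by the fusion rules \eqref{eqn:sl2_fusion}, it is closed under duals since each $\cL_{r,1}$ is self-contragredient, it is rigid by the results of Section \ref{sec:tens_prods_and_rigidity}, and it is tensor generated by the simple self-dual object $\cL_{2,1}$, with fusion rules matching those of $\rep\mathfrak{sl}_2$ under $\cL_{r,1}\mapsto V(r-1)$. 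Hence \cite[Theorem~$A_\infty$]{KW} applies and gives that $\cO_{25}^0$ is tensor equivalent either to $\cC(q,\mathfrak{sl}_2)$ for some $q$ that is $\pm 1$ or not a root of unity, via a functor sending $\cL_{2,1}$ to $L_q(1)$, or to a nontrivial $3$-cocycle twist $\cC(q',\mathfrak{sl}_2)^\tau$ of such a category.

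Next I would compute the intrinsic dimension $d(\cL_{2,1})=e_{\cL_{2,1}}\circ i_{\cL_{2,1}}$. By Theorem \ref{thm:L21_rigid}, $\cL_{2,1}$ is self-dual with coevaluation $i_1\colon\cL_{1,1}\to\cL_{2,1}\tens\cL_{2,1}$ and evaluation $2p_1\colon\cL_{2,1}\tens\cL_{2,1}\to\cL_{1,1}$, and $p_1\circ i_1=\Id_{\cL_{1,1}}$ by \eqref{eqn:c25_coev_ev}. Therefore $d(\cL_{2,1})=(2p_1)\circ i_1=2\,\Id_{\cL_{1,1}}$, i.e.\ $d(\cL_{2,1})=2$; this is the same information as the rigidity scalar $\mathfrak{R}=\tfrac12$ obtained in Theorem \ref{thm:L21_rigid}.

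Then I would match dimensions. Since the intrinsic dimension of $L_q(1)$ in $\cC(q,\mathfrak{sl}_2)$ is $-q-q^{-1}$, the untwisted case forces $-q-q^{-1}=2$, hence $q=-1$. In the twisted case, the nontrivial $\ZZ/2\ZZ$ $3$-cocycle negates the dimension of the odd generator --- equivalently $\cC(q',\mathfrak{sl}_2)^\tau\cong\cC(-q',\mathfrak{sl}_2)$, the same equivalence (at $q_\cC=-q'$) discussed just before the statement --- so matching dimensions now gives $q'+q'^{-1}=2$, hence $q'=1$. Thus $\cO_{25}^0$ is tensor equivalent to $\cC(-1,\mathfrak{sl}_2)$ or to $\cC(1,\mathfrak{sl}_2)^\tau$; but the $q_\cC=-1$ instance of the equivalence $\cC(q_\cC,\mathfrak{sl}_2)\cong\cC(-q_\cC,\mathfrak{sl}_2)^\tau$ identifies these two, and $\cC(1,\mathfrak{sl}_2)\cong\rep\mathfrak{sl}_2$, so all four categories in the statement are tensor equivalent.

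I do not expect a genuine obstacle: the one computationally hard ingredient, the value $\mathfrak{R}=\tfrac12$, is already established in Theorem \ref{thm:L21_rigid}, and the rest is dimension bookkeeping. The only conceptually delicate point --- and the reason both a twisted and an untwisted candidate appear --- is that \cite[Theorem~$A_\infty$]{KW} determines the Kazhdan--Wenzl parameter only up to sign and inversion, so the sign of the dimension (equivalently, the presence or absence of the $\ZZ/2\ZZ$ $3$-cocycle) is exactly the extra datum that must be supplied; it is fixed by $d(\cL_{2,1})=+2$ rather than $-2$.
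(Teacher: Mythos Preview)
Your proposal is correct and follows essentially the same approach as the paper: apply \cite[Theorem~$A_\infty$]{KW} using the fusion rules \eqref{eqn:sl2_fusion}, then distinguish the possible targets by computing $d(\cL_{2,1})=2$ from the rigidity data in Theorem \ref{thm:L21_rigid}, obtaining $q=-1$ in the untwisted case and $q'=1$ in the twisted case, which coincide via $\cC(q_\cC,\mathfrak{sl}_2)\cong\cC(-q_\cC,\mathfrak{sl}_2)^\tau$. Your explicit verification that $d(\cL_{2,1})=(2p_1)\circ i_1=2$ from \eqref{eqn:c25_coev_ev} is exactly what the paper means by ``the conclusion of the proof of Theorem \ref{thm:L21_rigid}.''
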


We next wish to relate the braidings on $\cO_{25}^0$ and $\cC(-1,\mathfrak{sl}_2)$. Thus we determine all possible braidings on $\cC(-1, \mathfrak{sl}_2)$ as well as the braiding on $\cO_{25}^0$, and then we find the unique braiding on $\cC(-1, \mathfrak{sl}_2)$ such that the tensor equivalence $\cC(-1, \mathfrak{sl}_2) \cong \cO_{25}^0$ of Proposition \ref{prop: tens-equiv} is a braided equivalence. Here we follow the discussion and methods of \cite[Sections 6 and 8]{GN}.

We first consider braidings on $\cC(q, \mathfrak{sl}_2)$. By naturality of the braiding isomorphisms and the hexagon axioms, a braiding $\cR$ is determined uniquely by its value $\cR_{X,X}$ on the generator $X = L_q(1)$. 
Set $$f_X : = i_X \circ e_X.$$
We claim that $f_X$ does not depend on the choice of $i_X$ and $e_X$: Since $X$ is simple, the spaces
\begin{equation*}
\mathrm{Hom}_{\cC(q,\mathfrak{sl}_2)}(X\otimes X,\vac)\cong\mathrm{End}_{\cC(q,\mathfrak{sl}_2)}(X) \cong\mathrm{Hom}_{\cC(q,\mathfrak{sl}_2)}(\vac, X\otimes X)
\end{equation*}
are one-dimensional. Thus any two choices of $e_X$ differ by a non-zero scalar multiple $c$, and the corresponding choices of $i_X$ differ by $c^{-1}$ in order to preserve \eqref{eq:rigidity1} and \eqref{eq:rigidity2}. So $f_X=i_X\circ e_X$ is well defined. 

Now since $\Id_{X\otimes X}$ and $f_X$ form a basis for the endomorphisms of $X\otimes X$, for any braiding $\cR$ on $\cC(q,\mathfrak{sl}_2)$, there exist $a, b \in \mathbb{C}$ such that
\[
\cR_{X,X} = a\cdot f_X + b \cdot \Id_{X\otimes X}.
\]
Conversely, if such an $\cR_{X,X}$ determines a braiding on $\cC(q,\mathfrak{sl}_2)$, then unital properties of the braiding, naturality, and the hexagon axioms imply the following:
\begin{align}\label{eq:braidings}
\cA_{X,X,X} & \circ(\Id_X \otimes i_X) \circ r_X^{-1}\nonumber\\
 & =\cA_{X,X,X}\circ (\Id_X\otimes i_X)\circ\cR_{\vac, X}\circ l_X^{-1}\nonumber\\
& =\cA_{X,X,X}\circ\cR_{X\otimes X,X}\circ(i_X\otimes\Id_X)\circ l_X^{-1}\nonumber\\
& =(\cR_{X,X}\otimes\Id_X)\circ\cA_{X,X,X}\circ(\Id_X\otimes\cR_{X,X})\circ\cA_{X,X,X}^{-1}\circ(i_X\otimes\Id_X)\circ l_X^{-1}.
\end{align}
As in \cite[Lemma~6.1]{GN}, this relation does not have too many solutions when $q=-1$:
\begin{lem}\label{lem:braidings}
When $q=-1$, there are precisely two solutions $\cR_{X, X}: X \otimes X \rightarrow X \otimes X$ to the equation \eqref{eq:braidings}, namely
\[
\cR_{X,X} = \pm i(f_X -\Id_{X\otimes X}).
\]
\end{lem}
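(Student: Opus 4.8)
\emph{Proof proposal.} The plan is to adapt the argument of \cite[Lemma~6.1]{GN}, exploiting that $\mathrm{End}_{\cC(-1,\mathfrak{sl}_2)}(X\otimes X)$ is two-dimensional, spanned by $\Id_{X\otimes X}$ and $f_X$, together with the zigzag identities packaged in \eqref{eq:rigidity1} and \eqref{eq:rigidity2}. First I would record the elementary consequences of $f_X = i_X\circ e_X$ and $e_X\circ i_X = d(X)\,\Id_{\vac}$: since the intrinsic dimension of $X = L_{-1}(1)$ equals $d(X) = -q-q^{-1} = 2$ at $q=-1$ (see \cite[Exercise~8.18.8]{EGNO}), one gets $f_X\circ f_X = 2 f_X$, $f_X\circ i_X = 2 i_X$, and $e_X\circ f_X = 2 e_X$.

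Next I would write a general endomorphism of $X\otimes X$ as $\cR_{X,X} = a\cdot f_X + b\cdot\Id_{X\otimes X}$ with $a,b\in\CC$ (this is fully general, since $\dim\mathrm{End}(X\otimes X)=2$) and substitute it into the right-hand side of \eqref{eq:braidings}. Expanding $\cR_{X,X}\otimes\Id_X = a(f_X\otimes\Id_X)+b\Id$ and $\Id_X\otimes\cR_{X,X} = a(\Id_X\otimes f_X)+b\Id$ produces four terms. The pure $b^2$ term is $(i_X\otimes\Id_X)\circ l_X^{-1}$. The $ab$ term involving $(f_X\otimes\Id_X)\circ\cA_{X,X,X}\circ\cA_{X,X,X}^{-1}$ collapses, using $f_X\circ i_X = 2 i_X$, to $2\,(i_X\otimes\Id_X)\circ l_X^{-1}$. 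For the $ab$ term $\cA_{X,X,X}\circ(\Id_X\otimes f_X)\circ\cA_{X,X,X}^{-1}\circ(i_X\otimes\Id_X)\circ l_X^{-1}$, I would factor $\Id_X\otimes f_X = (\Id_X\otimes i_X)\circ(\Id_X\otimes e_X)$ and use \eqref{eq:rigidity2} in the form $(\Id_X\otimes e_X)\circ\cA_{X,X,X}^{-1}\circ(i_X\otimes\Id_X)\circ l_X^{-1} = r_X^{-1}$, which collapses it to $\cA_{X,X,X}\circ(\Id_X\otimes i_X)\circ r_X^{-1}$, that is, exactly the left-hand side of \eqref{eq:braidings}. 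Finally the $a^2$ term reduces by the same step to $(f_X\otimes\Id_X)\circ\cA_{X,X,X}\circ(\Id_X\otimes i_X)\circ r_X^{-1}$, and then factoring $f_X\otimes\Id_X = (i_X\otimes\Id_X)\circ(e_X\otimes\Id_X)$ and applying \eqref{eq:rigidity1} in the form $(e_X\otimes\Id_X)\circ\cA_{X,X,X}\circ(\Id_X\otimes i_X)\circ r_X^{-1} = l_X^{-1}$ turns it into $(i_X\otimes\Id_X)\circ l_X^{-1}$. Collecting the four contributions, \eqref{eq:braidings} becomes the single equation
\begin{equation*}
(1-ab)\cdot\bigl[\cA_{X,X,X}\circ(\Id_X\otimes i_X)\circ r_X^{-1}\bigr] = (a+b)^2\cdot\bigl[(i_X\otimes\Id_X)\circ l_X^{-1}\bigr]
\end{equation*}
in $\hom(X,(X\otimes X)\otimes X)$.

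The last step is to verify that $u_1 := \cA_{X,X,X}\circ(\Id_X\otimes i_X)\circ r_X^{-1}$ and $u_2 := (i_X\otimes\Id_X)\circ l_X^{-1}$ are linearly independent. By the $\mathfrak{sl}_2$-type fusion rules, $(X\otimes X)\otimes X\cong X^{\oplus 2}\oplus L_{-1}(3)$, so $\hom(X,(X\otimes X)\otimes X)$ is two-dimensional; and post-composing with $(\Id_X\otimes e_X)\circ\cA_{X,X,X}^{-1}$ (landing in $X\otimes\vac$) and with $e_X\otimes\Id_X$ (landing in $\vac\otimes X$) sends $u_1\mapsto(2 r_X^{-1},\, l_X^{-1})$ and $u_2\mapsto(r_X^{-1},\, 2 l_X^{-1})$, which are linearly independent. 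Hence the displayed equation forces $ab = 1$ and $(a+b)^2 = 0$, equivalently $b = -a$ and $a^2 = -1$, so $\cR_{X,X} = \pm i(f_X - \Id_{X\otimes X})$; conversely, for these two values both sides of the display vanish, so they indeed solve \eqref{eq:braidings}, giving exactly the two solutions claimed.

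The main obstacle I anticipate is the bookkeeping in the second step: correctly tracking the unit and associativity isomorphisms, and recognizing that each of the four terms in the expansion collapses, via the zigzag identities, into a scalar multiple of $u_1$ or $u_2$. Once that reduction is carried out, the remaining scalar equations $1-ab=0$ and $(a+b)^2=0$ are immediate.
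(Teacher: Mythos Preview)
Your proof is correct and follows essentially the same approach as the paper: write $\cR_{X,X}=a f_X+b\,\Id$, expand the right side of \eqref{eq:braidings} using the zigzag identities to obtain $(a+b)^2\,u_2+ab\,u_1$ (which is the paper's $(a^2+d(X)ab+b^2)\,u_2+ab\,u_1$ at $d(X)=2$), and then verify linear independence of $u_1,u_2$ to force $ab=1$ and $a+b=0$. The only cosmetic difference is your linear-independence check, which uses the two functionals $(\Id_X\otimes e_X)\circ\cA^{-1}$ and $e_X\otimes\Id_X$ separately to get an invertible $2\times 2$ matrix, whereas the paper combines them into a single functional $d(X)\,l_X\circ(e_X\otimes\Id_X)-r_X\circ(\Id_X\otimes e_X)\circ\cA^{-1}$ that annihilates $u_1$ but not $u_2$.
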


\begin{proof}
Setting $\cR_{X,X}=a\cdot f_X+b\cdot\Id_{X\otimes X}$ and using the definition of $f_X$ and rigidity \eqref{eq:rigidity1} and \eqref{eq:rigidity2}, the right side of \eqref{eq:braidings} is
\begin{align*}
& a^2\cdot(f_X \otimes \Id_{X}) \circ \cA_{X,X,X} \circ (\Id_X \otimes f_{X}) \circ \cA_{X,X,X}^{-1} \circ (i_X \otimes \Id_X) \circ l_X^{-1}\\
&\quad \quad +ab\cdot\cA_{X,X,X}\circ(\Id_X\otimes f_X)\circ\cA_{X,X,X}^{-1}\circ(i_X\otimes\Id_X)\circ l_X^{-1}\nonumber\\
&\quad\quad + ab\cdot(f_{X} \otimes \Id_{X})  \circ (i_X \otimes \Id_X) \circ l_X^{-1}+b^2\cdot(i_X\otimes\Id_X)\circ l_X^{-1}\\
&\quad = a^2\cdot(i_X\otimes\Id_X)\circ(e_X\otimes\Id_X)\circ\cA_{X,X,X}\circ(\Id_X\otimes i_X)\circ r_X^{-1}\nonumber\\
&\quad\quad +ab\cdot\cA_{X,X,X}\circ(\Id_X\otimes i_X)\circ r_X^{-1}+(d(X)ab+b^2)\cdot(i_X\otimes\Id_X)\circ l_X^{-1}\nonumber\\
& \quad=(a^2+d(X)ab+b^2)\cdot(i_X\otimes\Id_X)\circ l_X^{-1}+ab\cdot\cA_{X,X,X}\circ(\Id_X\otimes i_X)\circ r_X^{-1}.
\end{align*}
Recall that $d(X)=2$ when $q=-1$, and also note that $(i_X\otimes\Id_X)\circ l_X^{-1}$ and $\cA_{X,X,X}\circ(\Id_X\otimes i_X)\circ r_X^{-1}$ are linearly independent; this can be seen by composing both morphisms with
\begin{equation*}
d(X)\cdot l_X\circ(e_X\otimes\Id_X)-r_X\circ(\Id_X\otimes e_X)\circ\cA_{X,X,X}^{-1}.
\end{equation*}
Thus $\cR_{X,X}=a\cdot f_X+b\cdot\Id_{X\otimes X}$ solves \eqref{eq:braidings} if and only if
\begin{equation*}
b=-a\quad\text{and}\quad b=a^{-1}.
\end{equation*}
That is, $a=\pm i$ and $b=\mp i$.
\end{proof}

With $q=-1$, it is easy to check that 
\[
(f_X -  \Id_{X\otimes X})^2 = (d(X) - 2)\circ f_X + \Id_{X\otimes X} = \Id_{X\otimes X}.
\]
Thus the two possible braidings $\cR_{X,X}$ in Lemma \ref{lem:braidings} are mutual inverses. Since the tensor category $\cC(-1, \mathfrak{sl}_2)$ is indeed braided, both possibilities for $\cR_{X,X}$ extend to braidings on $\cC(-1,\mathfrak{sl}_2)$, and thus $\cC(-1,\mathfrak{sl}_2)$ admits exactly two braidings. We need to determine which of these two braidings makes the tensor equivalence $\cO_{25}^0 \cong \cC(-1, \mathfrak{sl}_2)$ of Proposition \ref{prop: tens-equiv} a braided equivalence, so we determine the braiding on $\cO_{25}^0$ next.

Just as in Lemma \ref{lem:braidings}, there are exactly two braidings on $\cO_{25}^0$, completely determined by
\begin{equation}\label{eq:braiding virasoro}
\cR_{\cL_{2,1}, \cL_{2,1}} = i(f_{\cL_{2,1}} - \Id_{\cL_{2,1}\tens\cL_{2,1}})\;\;\; {\rm or}\;\;\; \cR_{\cL_{2,1}, \cL_{2,1}} = -i(f_{\cL_{2,1}} - \Id_{\cL_{2,1}\tens\cL_{2,1}}).
\end{equation}
One of them is the braiding specified in \cite{HLZ8}, and the other is its inverse; we would like to determine which is the braiding of \cite{HLZ8}. In fact, the construction in \cite{HLZ8} (see also \cite[Section 3.3.4]{CKM1}) shows that $\cR_{\cL_{2,1},\cL_{2,1}}$ satisfies
\begin{equation}\label{eq:skew}
\cR_{\cL_{2,1},\cL_{2,1}}(\cY_\tens(v_{2,1}, x)v_{2,1}) = e^{xL(-1)}\cY_\tens(v_{2,1}, e^{\pi i}x)v_{2,1},
\end{equation}
where $v_{2,1}\in\cL_{2,1}$ is a generating vector of conformal weight $h_{2,1}=-\frac{5}{4}$ and $\cY_\tens$ is the tensor product intertwining operator of type $\binom{\cL_{2,1}\tens\cL_{2,1}}{\cL_{2,1}\,\cL_{2,1}}$. 

The method of \cite[Sections~7 and 8]{GN} shows how to determine which possibility in \eqref{eq:braiding virasoro} corresponds to \eqref{eq:skew}, by comparing $e_{\cL_{2,1}} \circ \cR_{\cL_{2,1},\cL_{2,1}}$ with $e_{\cL_{2,1}}$. Similar to equation \eqref{phi2} in the proof of Theorem \ref{thm:L21_rigid}, we can choose $e_{\cL_{2,1}}$ so that the intertwining operator $\cE:=e_{\cL_{2,1}}\circ\cY_\tens$ of type $\binom{\cL_{1,1}}{\cL_{2,1}\,\cL_{2,1}}$ satisfies
\begin{equation}\label{eqn:eval_intw_op}
\cE(v_{2,1},x)v_{2,1}\in x^{-2h_{2,1}}(\vac +x\cL_{1,1}[[x]])=x^{5/2}(\vac+x\cL_{1,1}[[x]]).
\end{equation}
So by \eqref{eq:skew},
\begin{align*}
(e_{\cL_{2,1}}\circ\cR_{\cL_{2,1},\cL_{2,1}}\circ\cY_\tens)(v_{2,1},x)v_{2,1} & = e^{xL(-1)}\cE(v_{2,1},e^{\pi i}x)v_{2,1}\nonumber\\
& \in(e^{\pi i} x)^{5/2}(\vac+x\cL_{1,1}[[x]]) = x^{5/2}(i\cdot\vac+x\cL_{1,1}[[x]]).
\end{align*}
Since $\cE$ spans the space of intertwining operators of type $\binom{\cL_{1,1}}{\cL_{2,1}\,\cL_{2,1}}$, it follows that
\begin{equation}\label{eqn:braid_and_eval}
e_{\cL_{2,1}}\circ\cR_{\cL_{2,1},\cL_{2,1}}\circ\cY_\tens=i\cdot\cE,
\end{equation}
equivalently $e_{\cL_{2,1}} \circ \cR_{\cL_{2,1}, \cL_{2,1}} = i \cdot e_{\cL_{2,1}}$. On the other hand,
\[
e_{\cL_{2,1}} \circ \pm i(f_{\cL_{2,1}} - \Id_{\cL_{2,1}\tens\cL_{2,1}}) = \pm i(d(\cL_{2,1})-1)e_{\cL_{2,1}} =\pm i\cdot e_{\cL_{2,1}}.
\]
As a result, the braiding on $\cO_{25}^0$ given by \cite{HLZ8} satisfies
\begin{equation}\label{eq:correct braiding virasoro}
\cR_{\cL_{2,1}, \cL_{2,1}} = i(f_{\cL_{2,1}} - \Id_{\cL_{2,1}\tens\cL_{2,1}}).
\end{equation}
To conclude,
\begin{thm}\label{prop:braiding1}
The tensor category $\cC(-1, \mathfrak{sl}_2)$ equipped with the braiding determined by $\cR_{L_{-1}(1), L_{-1}(1)} = i(f_{L_{-1}(1)} -  \Id_{L_{-1}(1)\otimes L_{-1}(1)})$ is braided tensor equivalent to $\cO_{25}^0$.
\end{thm}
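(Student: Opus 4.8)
The plan is to observe that essentially all the work has already been done: it remains only to transport the braiding of \cite{HLZ8} on $\cO_{25}^0$ across the tensor equivalence of Proposition \ref{prop: tens-equiv} and to recognize the result using Lemma \ref{lem:braidings} together with \eqref{eq:correct braiding virasoro}. By the discussion preceding Proposition \ref{prop: tens-equiv} (and \cite[Theorem~$A_\infty$]{KW}), that equivalence can be realized by a tensor functor $F\colon\cO_{25}^0\to\cC(-1,\mathfrak{sl}_2)$ with $F(\cL_{2,1})\cong L_{-1}(1)$; fix a quasi-inverse $G$ of $F$, which is again a strong monoidal functor. First I would recall the standard fact that a braiding transports along a tensor equivalence: starting from the braiding $\cR^{\cO}$ of \cite{HLZ8} on $\cO_{25}^0$, one obtains a braiding $\cR$ on $\cC(-1,\mathfrak{sl}_2)$ for which $F$, and hence $G$, is a braided tensor equivalence, because naturality and the two hexagon axioms are preserved under conjugation by the monoidal functor and its structure isomorphisms.

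It then remains to compute $\cR_{L_{-1}(1),L_{-1}(1)}$, which by naturality and the hexagons determines $\cR$ entirely. Here I would use that a strong monoidal functor preserves duality data: if $(i_X,e_X)$ are coevaluation and evaluation for $X=L_{-1}(1)$, then composing $G(i_X)$ and $G(e_X)$ with the unit and multiplication constraints of $G$ produces coevaluation and evaluation for $G(X)=\cL_{2,1}$, so that under the canonical identification of $\Endo(X\otimes X)$ with $\Endo(\cL_{2,1}\tens\cL_{2,1})$ induced by the multiplication constraint $G_{X,X}$, the functor $G$ carries $\Id_{X\otimes X}$ to $\Id_{\cL_{2,1}\tens\cL_{2,1}}$ and $f_X=i_X\circ e_X$ to $f_{\cL_{2,1}}=i_{\cL_{2,1}}\circ e_{\cL_{2,1}}$. (As observed in the excerpt for $\cC(q,\mathfrak{sl}_2)$, $f_X$ is independent of the chosen rigidity data because $X$ is simple; the same argument applies verbatim to $\cL_{2,1}$ in $\cO_{25}^0$, and the scalar ambiguity in $(i_X,e_X)$ cancels in the composite $i_X\circ e_X$.) Consequently $\cR_{L_{-1}(1),L_{-1}(1)}$ — the morphism corresponding under this identification to $\cR^{\cO}_{\cL_{2,1},\cL_{2,1}}$ — equals $i(f_{L_{-1}(1)}-\Id_{L_{-1}(1)\otimes L_{-1}(1)})$, since $\cR^{\cO}_{\cL_{2,1},\cL_{2,1}}=i(f_{\cL_{2,1}}-\Id_{\cL_{2,1}\tens\cL_{2,1}})$ by \eqref{eq:correct braiding virasoro}. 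By Lemma \ref{lem:braidings} this is precisely one of the two braidings on $\cC(-1,\mathfrak{sl}_2)$, namely the one in the statement, so $G$ is a braided tensor equivalence from $\cC(-1,\mathfrak{sl}_2)$ with this braiding to $\cO_{25}^0$.

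The conceptual content is entirely contained in Proposition \ref{prop: tens-equiv}, Lemma \ref{lem:braidings}, and \eqref{eq:correct braiding virasoro}; the only point that needs care is the diagram chase with the monoidal structure isomorphisms of $F$ (equivalently $G$) used to check that $G$ intertwines $f_{L_{-1}(1)}$ with $f_{\cL_{2,1}}$ and the identity with the identity, so that it sends the distinguished braiding morphism on one side to the distinguished one on the other. I expect this routine bookkeeping — not any genuine difficulty — to be the main (minor) obstacle.
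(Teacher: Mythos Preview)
Your proposal is correct and follows essentially the same approach as the paper: both transport the braiding of $\cO_{25}^0$ along the tensor equivalence of Proposition \ref{prop: tens-equiv}, then use that a strong monoidal functor preserves duality data to match $f_{\cL_{2,1}}$ with $f_{L_{-1}(1)}$ and invoke \eqref{eq:correct braiding virasoro}. The only cosmetic difference is that the paper works directly with $\cF$ and its monoidal structure isomorphism $F$ rather than a quasi-inverse $G$, but the content is identical.
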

\begin{proof}
Proposition \ref{prop: tens-equiv} gives a tensor equivalence $\cF: \cO_{25}^0\rightarrow\cC(-1, \mathfrak{sl}_2)$ such that $\cF(\cL_{2,1})=L_{-1}(1)$. Moreover, $\cF$ becomes a braided tensor equivalence when $\cC(-1,\mathfrak{sl}_2)$ is equipped with the braiding induced by the braiding on $\cO_{25}^0$ via $\cF$. This is the braiding $\cR$ on $\cC(-1,\mathfrak{sl}_2)$ uniquely determined by the commutativity of the diagram
\begin{equation*}
\xymatrixcolsep{5pc}
\xymatrix{
L_{-1}(1)\otimes L_{-1}(1) \ar[r]^{\cR_{L_{-1}(1),L_{-1}(1)}} \ar[d]^F & L_{-1}(1)\otimes L_{-1}(1) \ar[d]^F\\
\cF(\cL_{2,1}\tens\cL_{2,1}) \ar[r]^{\cF(\cR_{\cL_{2,1},\cL_{2,1}})} & \cF(\cL_{2,1}\tens\cL_{2,1})\\
}
\end{equation*}
where $F: L_{-1}(1)\otimes L_{-1}(1)=\cF(\cL_{2,1})\otimes\cF(\cL_{2,1})\rightarrow\cF(\cL_{2,1}\tens\cL_{2,1})$ is the natural isomorphism that is part of the data of the tensor functor $\cF$. So using \eqref{eq:correct braiding virasoro}, 
\begin{align*}
\cR_{L_{-1}(1),L_{-1}(1)} & = F^{-1}\circ i\cF( f_{\cL_{2,1}}-\Id_{\cL_{2,1}\tens\cL_{2,1}})\circ F =i(F^{-1}\circ \cF(f_{\cL_{2,1}})\circ F-\Id_{L_{-1}(1)\otimes L_{-1}(1)}).
\end{align*}
It remains to show that $F^{-1}\circ\cF(f_{\cL_{2,1}})\circ F=f_{L_{-1}(1)}$. Indeed, properties of tensor functors (see for example \cite[Exercise 2.10.6]{EGNO} or \cite[Proposition 2.77]{CKM1}) show we can take 
\begin{equation*}
e_{L_{-1}(1)}=\varphi\circ\cF(e_{\cL_{2,1}})\circ F\quad\text{and}\quad i_{L_{-1}(1)}=F^{-1}\circ\cF(i_{\cL_{2,1}})\circ \varphi^{-1},
\end{equation*}
where $\varphi: \cF(\cL_{1,1})\rightarrow \vac$ is the isomorphism between units that is part of the data of the tensor functor $\cF$. Thus $F^{-1}\circ\cF(f_{\cL_{2,1}})\circ F= i_{L_{-1}(1)} \circ e_{L_{-1}(1)} = f_{L_{-1}(1)}$, as desired. 
\end{proof}

In the next sections, we will also work with the category $\cO_1$ of $C_1$-cofinite grading-restricted generalized modules for the (simple) $c=1$ Virasoro vertex operator algebra $V_1$. By \cite{CJORY}, $\cO_1$ equals the category of finite-length $\cV ir$-modules of central charge $1$ whose composition factors are irreducible quotients of reducible Verma modules, and $\cO_1$ admits the vertex algebraic braided tensor category structure of \cite{HLZ1}-\cite{HLZ8}. By \cite[Example 4.12]{McR} and \cite[Remark 4.4.6]{CMY2}, $\cO_1$ is also rigid. As at $c=25$, $\lbrace\cL_{r,1}\,\vert\,r\in\ZZ_+\rbrace$ are all the simple objects of $\cO_1$, up to isomorphism; at $c=1$, $\cL_{r,1}$ is the irreducible $\cV ir$-module with lowest conformal weight $\frac{1}{4}(r-1)^2$ (by the $t=1$ case of \eqref{eqn:h_rs_def}).

Although $\cO_1$ is not semisimple, \cite[Example 4.12]{McR} shows that the $V_1$-modules $\cL_{r,1}$, $r\in\ZZ_+$, form the simple objects of a semisimple tensor subcategory $\cO_1^0$ that is tensor equivalent to $\cC(1,\mathfrak{sl}_2)^\tau\cong\cC(-1,\mathfrak{sl}_2)$. The braiding on $\cO_1^0$ was determined in \cite{McR}, but we can also use the method of this section:

\begin{prop}\label{prop:braiding2}
The tensor category $\cC(-1, \mathfrak{sl}_2)$ equipped with the braiding determined by $\cR_{L_{-1}(1), L_{-1}(1)} = -i(f_{L_{-1}(1)} -  \Id_{L_{-1}(1)\otimes L_{-1}(1)})$ is braided tensor equivalent to $\cO_{1}^0$. In particular, there is a braid-reversed tensor equivalence $\cO_1^0\rightarrow\cO_{25}^0$ that identifies the simple modules $\cL_{r,1}$ in both categories.
\end{prop}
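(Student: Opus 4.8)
The plan is to mirror the argument already carried out for $\cO_{25}^0$ in Theorem \ref{prop:braiding1}, applying the method of \cite[Sections~7 and 8]{GN} to $\cO_1^0$ in place of $\cO_{25}^0$. First I would recall from \cite[Example~4.12]{McR} that $\cO_1^0$ is tensor equivalent to $\cC(-1,\mathfrak{sl}_2)$ via a functor sending $\cL_{2,1}$ to $L_{-1}(1)$, and that $\cL_{2,1}$ is rigid and self-dual with $d(\cL_{2,1})=2$ (the intrinsic dimension is a tensor-categorical invariant, so it must agree with that of $L_{-1}(1)$ in $\cC(-1,\mathfrak{sl}_2)$, which is $-(-1)-(-1)^{-1}=2$; alternatively it can be read off the rigidity computation in \cite{McR}). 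By the exact analogue of Lemma \ref{lem:braidings}, $\cO_1^0$ has precisely two braidings, determined by $\cR_{\cL_{2,1},\cL_{2,1}}=\pm i(f_{\cL_{2,1}}-\Id_{\cL_{2,1}\tens\cL_{2,1}})$, and exactly one of these is the Huang--Lepowsky--Zhang braiding of \cite{HLZ8}.

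The key step is to identify which sign occurs, and this is where the central charges $1$ and $25$ differ. As in the proof of Theorem \ref{prop:braiding1}, I would choose an evaluation $e_{\cL_{2,1}}$ so that the induced intertwining operator $\cE=e_{\cL_{2,1}}\circ\cY_\tens$ of type $\binom{\cL_{1,1}}{\cL_{2,1}\,\cL_{2,1}}$ satisfies $\cE(v_{2,1},x)v_{2,1}\in x^{-2h_{2,1}}(\vac+x\cL_{1,1}[[x]])$, where now $h_{2,1}=\frac{1}{4}(2-1)^2=\frac14$ at central charge $1$ (from the $t=1$ case of \eqref{eqn:h_rs_def}). Then the skew-symmetry formula \eqref{eq:skew} for the HLZ braiding gives
\begin{equation*}
(e_{\cL_{2,1}}\circ\cR_{\cL_{2,1},\cL_{2,1}}\circ\cY_\tens)(v_{2,1},x)v_{2,1}=e^{xL(-1)}\cE(v_{2,1},e^{\pi i}x)v_{2,1}\in (e^{\pi i}x)^{-2h_{2,1}}(\vac+x\cL_{1,1}[[x]]),
\end{equation*}
and since $-2h_{2,1}=-\frac12$ at $c=1$, the leading scalar is $e^{-\pi i/2}=-i$ rather than the $e^{5\pi i/2}=i$ obtained at $c=25$. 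As $\cE$ spans the one-dimensional space of intertwining operators of this type, this forces $e_{\cL_{2,1}}\circ\cR_{\cL_{2,1},\cL_{2,1}}=-i\cdot e_{\cL_{2,1}}$, and comparing with $e_{\cL_{2,1}}\circ(\pm i)(f_{\cL_{2,1}}-\Id)=\pm i(d(\cL_{2,1})-1)e_{\cL_{2,1}}=\pm i\cdot e_{\cL_{2,1}}$ picks out the minus sign. The transport-of-structure argument at the end of Theorem \ref{prop:braiding1} (using properties of tensor functors to compatibly transport $e_{\cL_{2,1}}$, $i_{\cL_{2,1}}$, hence $f_{\cL_{2,1}}$) then shows $\cC(-1,\mathfrak{sl}_2)$ with $\cR_{L_{-1}(1),L_{-1}(1)}=-i(f_{L_{-1}(1)}-\Id_{L_{-1}(1)\otimes L_{-1}(1)})$ is braided tensor equivalent to $\cO_1^0$.

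Finally, combining this with Theorem \ref{prop:braiding1}: $\cO_{25}^0$ is braided equivalent to $\cC(-1,\mathfrak{sl}_2)$ with braiding $\cR_{X,X}=i(f_X-\Id)$, while $\cO_1^0$ is braided equivalent to the same category with the opposite braiding $\cR_{X,X}^{-1}=-i(f_X-\Id)$ (these are mutual inverses by the identity $(f_X-\Id)^2=\Id$ noted in the excerpt). Hence the composite tensor equivalence $\cO_1^0\cong\cC(-1,\mathfrak{sl}_2)\cong\cO_{25}^0$, which identifies $\cL_{r,1}$ with $\cL_{r,1}$ in both categories, is braid-reversed. The main obstacle is the bookkeeping in the skew-symmetry/leading-power computation — getting the branch of $e^{\pi i}$ and the sign of $2h_{2,1}$ exactly right so that the $c=1$ case genuinely produces the opposite sign from $c=25$; once that scalar is pinned down, everything else is a direct translation of the $c=25$ argument, together with the elementary observation that the two braidings of $\cC(-1,\mathfrak{sl}_2)$ are inverse to each other.
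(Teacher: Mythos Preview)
Your proposal is correct and follows essentially the same approach as the paper's proof: the paper simply says the argument is the same as for $\cO_{25}^0$, with the single change that $h_{2,1}=\frac{1}{4}$ at $c=1$ so the leading power in \eqref{eqn:eval_intw_op} is $x^{-1/2}$, giving $e^{-\pi i/2}=-i$ in place of $i$, and then deduces the braid-reversed equivalence with $\cO_{25}^0$ as a corollary of Theorem \ref{prop:braiding1}. Your exposition matches this exactly.
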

\begin{proof}
The proof is the same as for $\cO_{25}^0$, except that the lowest power of $x$ in \eqref{eqn:eval_intw_op} is $x^{-1/2}$ since now $h_{2,1}=\frac{1}{4}$. With this change, \eqref{eqn:braid_and_eval} becomes
\begin{equation*}
e_{\cL_{2,1}}\circ\cR_{\cL_{2,1},\cL_{2,1}}\circ\cY_\tens=e^{-\pi i/2}\cdot\cE,
\end{equation*}
and then \eqref{eq:correct braiding virasoro} changes to
\begin{equation}\label{eq:correct braiding virasoro_1}
\cR_{\cL_{2,1}, \cL_{2,1}} = -i(f_{\cL_{2,1}} - \Id_{\cL_{2,1}\tens\cL_{2,1}}).
\end{equation}
The braided tensor equivalence of $\cO_1^0$ with $\cC(-1,\mathfrak{sl}_2)$ (equipped with the indicated braiding) follows exactly as in the proof of Theorem \ref{prop:braiding1}, and then the braid-reversed equivalence with $\cO_{25}^0$ is a corollary of Theorem \ref{prop:braiding1}.
\end{proof}

\section{The Virasoro vertex operator algebra \texorpdfstring{$V_{25}$}{V(25)} as a \texorpdfstring{$PSL_2(\CC)$}{PSL(2,C)}-orbifold}\label{sec:W(-1)}

In this section, we construct and explore the representation theory of a vertex algebra $\cW(-1)$ that contains the $c=25$ Virasoro vertex operator algebra $V_{25}$ as a $PSL_2(\CC)$-orbifold, that is, as the fixed-point subalgebra of an action of $PSL_2(\CC)$ by automorphisms. The algebra $\cW(-1)$ is a conformal vertex algebra in the sense of \cite{HLZ1}, since it has a conformal vector but infinite-dimensional conformal weight spaces and no lower bound on conformal weights. For motivation, recall that for $p\in\ZZ_+$, the Virasoro vertex operator algebra $V_{13-6p-6p^{-1}}$ is the $PSL_2(\CC)$-orbifold of a simple $C_2$-cofinite vertex operator algebra $\cW(p)$: for $p=1$, this is the $\mathfrak{sl}_2$-root lattice vertex operator algebra \cite{DG, Mi}, while for $p\geq 2$, this is the triplet $W$-algebra \cite{ALM}. Thus we are now constructing the analogous vertex algebra $\cW(-1)$ for $p=-1$, equivalently, $c=25$.

The idea for obtaining $\cW(-1)$ is to use the tensor equivalence $\cO_1^0\cong\cO_{25}^0$ of the previous section to transfer the vertex operator algebra structure on the simple affine vertex operator algebra $L_1(\mathfrak{sl}_2)$ (which is an extension of $V_1$) to a vertex operator structure on an extension of $V_{25}$. To do so, we need some general results on commutative algebras in braided tensor categories, which we place Appendix \ref{app:alg_in_tens_cat}. See also for example \cite{KO, CKM1} for more notation and results related to commutative algebras in braided tensor categories.

 We now take the Ind-categories, or direct limit completions, $\ind(\cO_{1}^0)$ and $\ind(\cO_{25}^0)$, as defined in \cite{CMY1}. Specifically, denoting the simple objects of $\cO_i^0$ for $i=1,25$ by $\cL_{r,1}^{(i)}$, $\ind(\cO_i^0)$ is the semisimple braided tensor category of $V_i$-modules which are isomorphic to possibly infinite direct sums of modules $\cL_{r,1}^{(i)}$, $r\in\ZZ_+$. The braid-reversed tensor equivalence $\cO_{1}^0\rightarrow\cO_{25}^0$ of Proposition \ref{prop:braiding2} extends to a braid-reversed tensor equivalence
\begin{equation*}
\cF: \ind(\cO_{1}^0)\longrightarrow\ind(\cO_{25}^0)
\end{equation*}
such that $\cF(\cL_{r,1}^{(1)})\cong\cL_{r,1}^{(25)}$ for all $r\in\ZZ_+$. 

\begin{thm}\label{thm:W_minus_one}
There is a unique (up to isomorphism) conformal vertex algebra extension $\cW(-1)$ of $V_{25}$ such that
\begin{equation}\label{eqn:W_minus_1_decomp}
\cW(-1)\cong\bigoplus_{n= 0}^\infty V(2n)\otimes\cL_{2n+1,1}^{(25)}
\end{equation}
as $V_{25}$-modules, where $V(2n)$ denotes the irreducible $(2n+1)$-dimensional $\mathfrak{sl}_2$-module. Moreover, $PSL_2(\CC)$ acts by conformal vertex algebra automorphisms on $\cW(-1)$ with fixed-point subalgebra $V_{25}$ such that \eqref{eqn:W_minus_1_decomp} also gives the decomposition of $\cW(-1)$ as a $PSL_2(\CC)$-module.
\end{thm}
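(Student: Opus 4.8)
The plan is to build $\cW(-1)$ by transporting the vertex operator algebra $L_1(\mathfrak{sl}_2)$ along the braid-reversed tensor equivalence $\cF\colon\ind(\cO_1^0)\to\ind(\cO_{25}^0)$ of Proposition \ref{prop:braiding2}, and to recover the $PSL_2(\CC)$-action from the functoriality of $\cF$.

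First I would recall the $c=1$ picture. The rank-one lattice vertex operator algebra $V_{\sqrt{2}\,\ZZ}\cong L_1(\mathfrak{sl}_2)$ carries a faithful action of $PSL_2(\CC)\cong SO_3(\CC)$ by automorphisms with fixed-point subalgebra $V_1$, and the classical branching rule (see, e.g., \cite{DG, Mi}) gives a $PSL_2(\CC)\times V_1$-module isomorphism
\[
L_1(\mathfrak{sl}_2)\cong\bigoplus_{n\geq 0}V(2n)\otimes\cL_{2n+1,1}^{(1)},
\]
where the summand $V(2n)\otimes\cL_{2n+1,1}^{(1)}$ first appears at conformal weight $n^2=\tfrac{1}{4}(2n)^2=h_{2n+1,1}^{(1)}$. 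In the dictionary between conformal vertex algebra extensions of $V_1$ and commutative algebras in $\ind(\cO_1^0)$ (see \cite{HKL,CKM1,CMY1} and Appendix \ref{app:alg_in_tens_cat}), $L_1(\mathfrak{sl}_2)$ becomes a haploid commutative algebra with trivial twist in $\rep PSL_2(\CC)\boxtimes\ind(\cO_1^0)$, the tensor factor $\rep PSL_2(\CC)$ carrying the multiplicity spaces $V(2n)$.

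Next I would upgrade $\cF$ to a ribbon equivalence. Since $\cF$ identifies $\cL_{r,1}^{(1)}$ with $\cL_{r,1}^{(25)}$, the ribbon twist in both categories is $e^{2\pi iL_0}$, and $h_{r,1}^{(25)}=1-\tfrac{1}{4}(r+1)^2\equiv-\tfrac{1}{4}(r-1)^2=-h_{r,1}^{(1)}\pmod{\ZZ}$, the functor $\cF$ is in fact a braid-reversed \emph{ribbon} equivalence, and hence so is $\Id_{\rep PSL_2(\CC)}\boxtimes\cF$. A braid-reversed ribbon functor preserves commutativity of algebras and sends a trivial twist $\theta_A=\Id$ to $\theta_{\cF(A)}=\Id$, so
\[
\cW(-1):=(\Id\boxtimes\cF)(L_1(\mathfrak{sl}_2))\cong\bigoplus_{n\geq 0}V(2n)\otimes\cL_{2n+1,1}^{(25)}
\]
is again a haploid commutative algebra with trivial twist, hence by the extension theorem carries a (unique) structure of simple conformal vertex algebra extending $V_{25}$ with underlying $V_{25}$-module \eqref{eqn:W_minus_1_decomp}; simplicity is inherited from that of $L_1(\mathfrak{sl}_2)$. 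For the uniqueness assertion, any conformal vertex algebra extension of $V_{25}$ realizing \eqref{eqn:W_minus_1_decomp} is such a haploid commutative algebra in $\rep PSL_2(\CC)\boxtimes\ind(\cO_{25}^0)$; applying $\Id\boxtimes\cF^{-1}$ transports it to a commutative algebra structure with trivial twist on the object underlying $L_1(\mathfrak{sl}_2)$, and uniqueness (up to isomorphism) of the vertex operator algebra structure on this module---proved in the appendix---gives uniqueness of $\cW(-1)$.

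Finally, being a tensor functor, $\cF$ carries algebra-object automorphisms of $L_1(\mathfrak{sl}_2)$ to algebra-object automorphisms of $\cW(-1)$, and under the extension dictionary these are precisely the conformal vertex algebra automorphisms fixing the Virasoro subalgebra pointwise; hence the $PSL_2(\CC)$-action transports to an action on $\cW(-1)$ by conformal vertex algebra automorphisms, acting on the summand $V(2n)\otimes\cL_{2n+1,1}^{(25)}$ of \eqref{eqn:W_minus_1_decomp} through its representation on $V(2n)$. This action is faithful because $V(2)$ is the faithful adjoint representation of $PSL_2(\CC)$, and its fixed-point subalgebra is $\bigoplus_{n\geq 0}V(2n)^{PSL_2(\CC)}\otimes\cL_{2n+1,1}^{(25)}=\cL_{1,1}^{(25)}=V_{25}$, since $V(2n)$ has no nonzero $PSL_2(\CC)$-invariants for $n\geq 1$. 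I expect the main obstacle to be the reconstruction underlying uniqueness---that the vertex operator algebra structure on $L_1(\mathfrak{sl}_2)$, equivalently the commutative algebra structure on the associated object of $\ind(\cO_1^0)$, is determined by the module decomposition---together with checking that the extension machinery of \cite{HKL,CKM1,CMY1} applies in the present $\ind$-category setting, where conformal weight spaces are infinite-dimensional and unbounded below; this is why those points are placed in the appendices.
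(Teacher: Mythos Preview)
Your approach is essentially the paper's: transport $L_1(\mathfrak{sl}_2)$ along the braid-reversed equivalence $\cF$, push the $PSL_2(\CC)$-automorphisms through $\cF$, and reduce uniqueness via $\cF^{-1}$ to the appendix result on $L_1(\mathfrak{sl}_2)$. Two points are worth flagging.

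First, a small but genuine gap in the uniqueness step. You claim that an arbitrary conformal vertex algebra extension of $V_{25}$ with the prescribed $V_{25}$-module decomposition is a commutative algebra in $\rep PSL_2(\CC)\boxtimes\ind(\cO_{25}^0)$. But the hypothesis gives only the $V_{25}$-module structure; there is no $PSL_2(\CC)$-action on such an extension a priori, and even if you impose the obvious one on the multiplicity spaces $V(2n)$, you have not shown the vertex operator is $PSL_2(\CC)$-equivariant. The paper avoids this by working in $\ind(\cO_{25}^0)$ alone for the uniqueness argument: it views the extension as a commutative algebra in $\ind(\cO_{25}^0)$, transports via $\cF^{-1}$ to a simple commutative algebra in $\ind(\cO_1^0)$ on the object $\bigoplus_n V(2n)\otimes\cL_{2n+1,1}^{(1)}$, and then invokes the appendix theorem, which only assumes the $V_1$-module decomposition (no equivariance). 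Your fix is easy: drop the $\rep PSL_2(\CC)$ factor in the uniqueness paragraph and apply $\cF^{-1}$ rather than $\Id\boxtimes\cF^{-1}$.

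Second, a difference in how the $PSL_2(\CC)$-module decomposition is established. By working in the Deligne product you make the action on each summand $V(2n)\otimes\cL_{2n+1,1}^{(25)}$ tautological. The paper instead transports each $g\in PSL_2(\CC)$ to $\cF(g)$ and must then argue that $\cF(V(2n)\otimes\cL_{2n+1,1}^{(1)})\cong V(2n)\otimes\cL_{2n+1,1}^{(25)}$ as a $PSL_2(\CC)\times V_{25}$-module; it does this by restricting to a maximal torus $T$ and comparing characters on the $(2n+1)$-dimensional lowest-weight space. Your packaging is cleaner here, provided you verify (as you implicitly assume) that the $PSL_2(\CC)$-action on $L_1(\mathfrak{sl}_2)$ really makes its multiplication a morphism in $\rep PSL_2(\CC)\boxtimes\ind(\cO_1^0)$; this is immediate since $PSL_2(\CC)$ acts by automorphisms commuting with $V_1$.
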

\begin{proof}
For the existence of $\cW(-1)$, recall that the simple affine vertex operator algebra $L_1(\mathfrak{sl}_2)$ of $\mathfrak{sl}_2$ at level $1$ (equivalently, the $\mathfrak{sl}_2$-root lattice vertex operator algebra) has automorphism group $PSL_2(\CC)$ with fixed-point subalgebra $V_1$ such that
\begin{equation*}
L_1(\mathfrak{sl}_2)\cong\bigoplus_{n= 0}^\infty V(2n)\otimes\cL_{2n+1,1}^{(1)}
\end{equation*}
as $PSL_2(\CC)\times V_1$-modules \cite{DG, Mi}. By \cite[Theorem 3.2]{HKL} or \cite[Theorem 7.5]{CMY1}, $L_1(\mathfrak{sl}_2)$ is a commutative $\ind(\cO_1^0)$-algebra, so $\cW(-1)=\cF(L_1(\mathfrak{sl}_2)$ is a simple commutative $\ind(\cO_{25}^0)$-algebra by  Corollary \ref{cor:F_preserve_simple} in the appendix. Thus again by \cite[Theorem 7.5]{CMY1}, $\cW(-1)$ is a simple conformal vertex algebra extension of $V_{25}$ such that $\cW(-1)\cong\bigoplus_{n= 0}^\infty V(2n)\otimes\cL_{2n+1,1}^{(25)}$ as a $V_{25}$-module. 

Moreover, if $g$ is an automorphism of $L_1(\mathfrak{sl}_2)$, then $\cF(g)$ is an automorphism of $\cW(-1)$. Thus $PSL_2(\CC)$ acts on $\cW(-1)$ by conformal vertex algebra automorphisms; since each automorphism of $\cW(-1)$ is in particular a $V_{25}$-module automorphism, we get a decomposition
\begin{equation*}
\cW(-1)\cong\bigoplus_{n = 0}^\infty \cF(V(2n)\otimes\cL_{2n+1,1}^{(1)})
\end{equation*}
as a $PSL_2(\CC)$-module, where each $g\in PSL_2(\CC)$ acts on $\cF(V(2n)\otimes\cL_{2n+1,1}^{(1)})$ by $\cF(g\otimes\Id)$. Let $T\subseteq PSL_2(\CC)$ denote a maximal torus and let $v_i\in V(2n)$ for $i=-n,-n+1,\ldots,n-1,n$ denote weight vectors of distinct weights $\lambda_{2i}: T\rightarrow\CC^\times$. Then for each $i$ and each $t\in T$, we have a commutative diagram
\begin{equation*}
\xymatrixcolsep{6pc}
\xymatrix{
\cF(v_i\otimes\cL_{2n+1,1}^{(1)}) \ar[d]^{\lambda_{2i}(t)\cdot\Id} \ar[r]^(.47){\cF(q_i\otimes\Id)} & \cF(V(2n)\otimes\cL_{2n+1,1}^{(1)}) \ar[d]^t\\
\cF(v_i\otimes\cL_{2n+1,1}^{(1)}) \ar[r]^(.47){\cF(q_i\otimes\Id)} & \cF(V(2n)\otimes\cL_{2n+1,1}^{(1)}) \\
}
\end{equation*}
where $q_i: \CC v_i\rightarrow V(2n)$ is the $T$-module injection. Note that since $q_i\otimes\Id$ is non-zero and $\cF$ is faithful, $\cF(q_i\otimes\Id)$ is also non-zero and thus injective (since its domain is an irreducible $V_{25}$-module). This shows that the lowest conformal weight space of $\cF(V(2n)\otimes\cL_{2n+1,1}^{(1)})$, which is a $(2n+1)$-dimensional $PSL_2(\CC)$-module, has the same character as $V(2n)$, and thus is isomorphic to $V(2n)$. It follows that
\begin{equation*}
\cF(V(2n)\otimes\cL_{2n+1,1}^{(1)})\cong V(2n)\otimes\cL_{2n+1,1}^{(25)}
\end{equation*}
as a $PSL_2(\CC)\times V_{25}$-module, proving that \eqref{eqn:W_minus_1_decomp} gives the decomposition of $\cW(-1)$ as a $PSL_2(\CC)\times V_{25}$-module.

For the uniqueness assertion in the theorem, let $V$ be any simple conformal vertex algebra extension of $V_{25}$ such that $V\cong\bigoplus_{n = 0}^\infty V(2n)\otimes\cL_{2n+1,1}^{(25)}$ as a $\cV ir$-module. Then $V\cong\cF(A)$ for some commutative $\ind(\cO_1^0)$-algebra $A$ (this can be proved similarly to Proposition \ref{prop:F_equiv_on_Rep_A} in Appendix \ref{app:alg_in_tens_cat}), and $A$ is simple by Corollary \ref{cor:F_preserve_simple}. If we can show $A\cong L_1(\mathfrak{sl}_2)$, then we have $V\cong\cW(-1)$. Thus it is sufficient to prove uniqueness of the simple vertex operator algebra structure on $\bigoplus_{n = 0}^\infty V(2n)\otimes\cL_{2n+1,1}^{(1)}$ extending $V_1$. This is proved in Theorem \ref{thm:L1_sl_2_unique} in Appendix \ref{app:L1_sl_2_unique}.
\end{proof}

\begin{rem}
We can also decompose $\cW(-1)$ as a $PSL_2(\CC)$-module using $\mathfrak{sl}_2$. First, $\mathfrak{sl}_2$ acts on $L_1(\mathfrak{sl}_2)$ by vertex operator algebra derivations (equivalently, $\ind(\cO_1^0)$-algebra derivations) and thus by the tensor equivalence $\cF$ acts on $\cW(-1)$ by $\ind(\cO_{25}^0)$-algebra derivations (equivalently, conformal vertex algebra derivations). Then just as in the proof of Theorem \ref{thm:W_minus_one}, the lowest conformal weight space of each $\cF(V(2n)\otimes\cL_{2n+1,1}^{(1)})$ is a $(2n+1)$-dimensional $\mathfrak{sl}_2$-module with the same character as $V(2n)$ and thus is isomorphic to $V(2n)$. It follows that \eqref{eqn:W_minus_1_decomp} gives the decomposition of $\cW(-1)$ as an $\mathfrak{sl}_2\times V_{25}$-module. Then since the action of $\mathfrak{sl}_2$ is locally finite, it integrates to an action of $SL_2(\CC)$ on $\cW(-1)$ by conformal vertex algebra automorphisms which by the decomposition \eqref{eqn:W_minus_1_decomp} descends to a faithful action of $PSL_2(\CC)$.
\end{rem}

\begin{rem}
We expect that the Virasoro vertex operator algebras $V_{13+6p+6p^{-1}}$ for $p\in\ZZ_{\geq 2}$ can also be realized as $PSL_2(\CC)$-orbifolds of simple conformal vertex algebras $\cW(-p)$. It would be interesting to see whether all such vertex algebras $\cW(-p)$ for $p\geq 1$ are $C_2$-cofinite, although in contrast to the $\NN$-graded case, the implications of $C_2$-cofiniteness for general $\ZZ$-graded conformal vertex algebras are not completely understood.
\end{rem}

We can now determine the representation theory of the conformal vertex algebra $\cW(-1)$ using Corollary \ref{cor:rep_0_A_rep_0_F(A)} in Appendix \ref{app:alg_in_tens_cat} and the well-known representation theory of $L_1(\mathfrak{sl}_2)$. In particular, viewing $\cW(-1)$ as a commutative algebra in $\ind(\cO_{25}^0)$, we have its braided tensor category $\rep^0\cW(-1)$ of local modules. As in \cite[Theorem 3.4]{HKL}, $\rep^0\cW(-1)$ is just the category of modules for $\cW(-1)$ (considered as a conformal vertex algebra) which are objects of $\ind(\cO_{25}^0)$ when viewed as $V_{25}$-modules. Since $L_1(\mathfrak{sl}_2)$ is a regular vertex operator algebra with two distinct simple modules, Corollary \ref{cor:rep_0_A_rep_0_F(A)} implies that $\rep^0\cW(-1)$ is semisimple with two distinct simple objects: $\cW(-1)$ itself and another simple $\cW(-1)$-module $\mathcal{X}$ such that
\begin{equation*}
\cX\cong\bigoplus_{n = 1}^\infty V(2n-1)\otimes\cL_{2n,1}^{(25)}
\end{equation*}
as a $V_{25}$-module. The only non-trivial tensor product in $\rep^0\cW(-1)$ is $\cX\tens\cX\cong\cW(-1)$, and moreover:
\begin{thm}\label{thm:W(-1)_rep_theory}
The category of finite-length $\cW(-1)$-modules which restrict to $V_{25}$-modules in $\ind(\cO_{25}^0)$ is a braided tensor category which is braid-reversed tensor equivalent to the modular tensor category of $L_1(\mathfrak{sl}_2)$-modules.
\end{thm}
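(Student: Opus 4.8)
The plan is to transport the well-understood modular tensor category of $L_1(\mathfrak{sl}_2)$-modules across the braid-reversed tensor equivalence $\cF$, using the general correspondence between local modules of a commutative algebra and those of its image under a tensor equivalence. First I would observe that, by the discussion immediately preceding the theorem together with \cite[Theorem~3.4]{HKL} and \cite{CMY1}, the category in the statement is precisely the full subcategory of finite-length objects of $\rep^0\cW(-1)$, where $\cW(-1)$ is regarded as a commutative algebra in $\ind(\cO_{25}^0)$. Since $\rep^0\cW(-1)$ is semisimple with the two simple objects $\cW(-1)$ and $\cX$ (and $\cX\tens\cX\cong\cW(-1)$), its finite-length objects are exactly the finite direct sums of $\cW(-1)$ and $\cX$; this subcategory contains the unit and is closed under $\tens$, so it is a braided tensor subcategory of $\rep^0\cW(-1)$, hence a braided tensor category in its own right. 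On the other side, $L_1(\mathfrak{sl}_2)$ is rational and $C_2$-cofinite, so its category of grading-restricted generalized modules is a semisimple modular tensor category with finitely many simple objects, and there the ``finite-length'' restriction is vacuous.

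Next I would apply Corollary~\ref{cor:rep_0_A_rep_0_F(A)} to the braid-reversed tensor equivalence $\cF:\ind(\cO_1^0)\rightarrow\ind(\cO_{25}^0)$ and the commutative algebra $L_1(\mathfrak{sl}_2)\in\ind(\cO_1^0)$ with $\cF(L_1(\mathfrak{sl}_2))=\cW(-1)$: since $\cF$ reverses the braiding, this yields a braid-reversed tensor equivalence $\rep^0 L_1(\mathfrak{sl}_2)\rightarrow\rep^0\cW(-1)$, where $\rep^0 L_1(\mathfrak{sl}_2)$ is computed inside $\ind(\cO_1^0)$. Because both simple $L_1(\mathfrak{sl}_2)$-modules restrict to $V_1$-modules in $\ind(\cO_1^0)$ (they are direct sums of the $\cL_{r,1}^{(1)}$), and because $L_1(\mathfrak{sl}_2)$ is rational, $\rep^0 L_1(\mathfrak{sl}_2)$ computed in $\ind(\cO_1^0)$ contains and is semisimply generated by all simple $L_1(\mathfrak{sl}_2)$-modules; by the extension-theory comparison results of \cite{HKL,CKM1,CMY1}, its induced braided tensor structure coincides with the intrinsic modular tensor structure on the category of $L_1(\mathfrak{sl}_2)$-modules. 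Combining this identification with Corollary~\ref{cor:rep_0_A_rep_0_F(A)}, and restricting the resulting equivalence to the (preserved) subcategories of finite-length objects, gives the asserted braid-reversed tensor equivalence.

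The only real obstacle is the matching of braided tensor structures, which must be handled with care at two points: first, that the intrinsic modular tensor structure on $L_1(\mathfrak{sl}_2)$-modules agrees with the one inherited from $\rep^0$ inside $\ind(\cO_1^0)$ — this requires the compatibility of the vertex-algebraic tensor product, unit, associativity, and braiding isomorphisms under induction, as in \cite{CKM1,CMY1} — and second, that ``braid-reversed'' is genuinely preserved when passing from the ambient equivalence $\cF$ to the induced equivalence on categories of local modules in Corollary~\ref{cor:rep_0_A_rep_0_F(A)}. Once these compatibilities are in place, everything else — semisimplicity, the two simple objects, the identification of $\cX$, closure under $\tens$ — is immediate from Theorem~\ref{thm:W_minus_one}, the discussion preceding the present theorem, and the known structure of the $L_1(\mathfrak{sl}_2)$ modular tensor category.
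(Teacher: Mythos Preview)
Your proposal is correct and follows essentially the same route as the paper: the theorem is stated without a separate proof, relying on the preceding discussion that invokes Corollary~\ref{cor:rep_0_A_rep_0_F(A)} applied to $\cF:\ind(\cO_1^0)\to\ind(\cO_{25}^0)$ and $A=L_1(\mathfrak{sl}_2)$, together with the known modular tensor category of $L_1(\mathfrak{sl}_2)$-modules. Your added care about matching the intrinsic modular tensor structure on $L_1(\mathfrak{sl}_2)$-modules with the one inherited from $\rep^0$ in $\ind(\cO_1^0)$ via \cite{HKL,CKM1,CMY1} is a reasonable elaboration of what the paper leaves implicit.
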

\begin{rem}
The above discussion and theorem suggest that $\cW(-1)$ may be a regular conformal vertex algebra, in the sense that all its weak modules are direct sums of $\cW(-1)$ and $\cX$. However, our methods here do not allow us to rule out $\cW(-1)$-modules which are not unions of their $C_1$-cofinite $V_{25}$-submodules. Also, it is unlikely that Zhu algebra methods would help with the classification of simple $\cW(-1)$-modules, since $\cW(-1)$-modules need not be $\NN$-gradable.
\end{rem}

We can also view $\cW(-1)$ as a commutative algebra in the larger braided tensor category $\ind(\cO_{25})$, and we then have the tensor category $\rep\cW(-1)$ of not-necessarily-local $\cW(-1)$-modules in $\ind(\cO_{25})$. We now show that $\cW(-1)$ and $\cX$ are the only simple objects of $\rep\cW(-1)$. To do so, recall from \cite{KO, CKM1} that there is a tensor functor of induction
\begin{align*}
\cF_{\cW(-1)}: \cO_{25} & \rightarrow\rep\cW(-1)\nonumber\\
 W &\mapsto \cW(-1)\tens W\nonumber\\
 f & \mapsto \Id_{\cW(-1)}\tens f
\end{align*}
The induction functor satisfies Frobenius reciprocity, in the sense that for any objects $W$ of $\cO_{25}$ and $X$ of $\rep\cW(-1)$, there is a natural isomorphism
\begin{equation*}
\hom_{V_{25}}(W,X)\xrightarrow{\cong}\hom_{\cW(-1)}(\cF_{\cW(-1)}(W),X).
\end{equation*}
The arguments in the proofs of \cite[Lemma 7.3 and Proposition 7.4]{MY} (which use Frobenius reciprocity) show that:
\begin{prop}\label{prop:W_minus_1_inductions}
For $r\in\ZZ_+$,
\begin{equation*}
\cF_{\cW(-1)}(\cL_{r,1})\cong\left\lbrace\begin{array}{ll}
\cW(-1)^{\oplus r} & \text{if $r$ is odd}\\
\cX^{\oplus r} &\text{if $r$ is even}\\
\end{array}
\right. .
\end{equation*}
\end{prop}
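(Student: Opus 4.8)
The plan is to compute $\cF_{\cW(-1)}(\cL_{r,1})=\cW(-1)\tens\cL_{r,1}$ first as a $V_{25}$-module, then recognize it inside the semisimple category $\rep^0\cW(-1)$ and pin down the multiplicities of its two simple summands by Frobenius reciprocity, exactly along the lines of \cite[Lemma 7.3 and Proposition 7.4]{MY}.

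First I would restrict to $V_{25}$: since fusion products distribute over direct sums, the decomposition \eqref{eqn:W_minus_1_decomp} gives
\begin{equation*}
\cF_{\cW(-1)}(\cL_{r,1})\big|_{V_{25}}=\cW(-1)\tens\cL_{r,1}\cong\bigoplus_{n=0}^\infty V(2n)\otimes\big(\cL_{2n+1,1}\tens\cL_{r,1}\big),
\end{equation*}
and by the fusion rules \eqref{eqn:sl2_fusion} each $\cL_{2n+1,1}\tens\cL_{r,1}$ is a finite direct sum of irreducibles $\cL_{k,1}$ with $k\equiv r\pmod 2$, since the parity condition $k+(2n+1)+r\equiv 1\pmod 2$ forces $k\equiv r\pmod 2$. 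Hence $\cF_{\cW(-1)}(\cL_{r,1})$ restricts to an object of $\ind(\cO_{25}^0)$, so by the identification of $\rep^0\cW(-1)$ with the category of $\cW(-1)$-modules that restrict to $\ind(\cO_{25}^0)$ (the analog of \cite[Theorem 3.4]{HKL}), it is a local $\cW(-1)$-module, i.e.\ an object of $\rep^0\cW(-1)$.

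Now $\rep^0\cW(-1)$ is semisimple with exactly two simple objects, $\cW(-1)$ and $\cX$, each with endomorphism algebra $\CC$ (this is established above using Corollary \ref{cor:rep_0_A_rep_0_F(A)} and the representation theory of $L_1(\mathfrak{sl}_2)$). Consequently $\cF_{\cW(-1)}(\cL_{r,1})$ is a direct sum of copies of $\cW(-1)$ and $\cX$, and the multiplicity of $\cW(-1)$, respectively of $\cX$, equals $\dim\hom_{\cW(-1)}(\cF_{\cW(-1)}(\cL_{r,1}),\cW(-1))$, respectively $\dim\hom_{\cW(-1)}(\cF_{\cW(-1)}(\cL_{r,1}),\cX)$; these are finite because, by the previous paragraph, $\cL_{1,1}$ and $\cL_{2,1}$ occur with finite multiplicity in the $V_{25}$-restriction. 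Frobenius reciprocity identifies these Hom spaces with $\hom_{V_{25}}(\cL_{r,1},\cW(-1))$ and $\hom_{V_{25}}(\cL_{r,1},\cX)$. Using \eqref{eqn:W_minus_1_decomp} and $\cX\cong\bigoplus_{n\geq 1}V(2n-1)\otimes\cL_{2n,1}$ together with Schur's lemma, $\cL_{r,1}$ occurs in $\cW(-1)$ with multiplicity space $V(r-1)$ if $r$ is odd and not at all if $r$ is even, and occurs in $\cX$ with multiplicity space $V(r-1)$ if $r$ is even and not at all if $r$ is odd. Since $\dim V(r-1)=r$, we conclude $\cF_{\cW(-1)}(\cL_{r,1})\cong\cW(-1)^{\oplus r}$ for $r$ odd and $\cF_{\cW(-1)}(\cL_{r,1})\cong\cX^{\oplus r}$ for $r$ even.

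The argument is essentially routine once \eqref{eqn:sl2_fusion}, Frobenius reciprocity, and the semisimplicity of $\rep^0\cW(-1)$ are in hand; the only places requiring a little care — and the mild obstacle — are the bookkeeping with the infinite direct sum defining $\cW(-1)$ (checking that $\cW(-1)\tens\cL_{r,1}$ is computed summand-by-summand and yields an honest object of $\ind(\cO_{25}^0)$ to which Frobenius reciprocity and the semisimple decomposition apply) and the fact that in the semisimple Ind-completion the multiplicity of a simple object with scalar endomorphisms equals the dimension of the relevant Hom space. An alternative to Frobenius reciprocity for the multiplicities is to read them off the $V_{25}$-restriction directly: for $r$ odd one knows $\cF_{\cW(-1)}(\cL_{r,1})\cong\cW(-1)^{\oplus m}$, and comparing the multiplicity of $\cL_{1,1}$ on both sides (it appears once in $\cW(-1)$ and, by \eqref{eqn:sl2_fusion}, only in the $n=(r-1)/2$ term of $\cW(-1)\tens\cL_{r,1}$, with total multiplicity $\dim V(r-1)=r$) forces $m=r$; the case $r$ even is analogous.
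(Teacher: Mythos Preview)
Your approach is essentially the same as the one the paper invokes from \cite[Lemma 7.3 and Proposition 7.4]{MY}: decompose $\cW(-1)\tens\cL_{r,1}$ as a $V_{25}$-module using \eqref{eqn:sl2_fusion}, land the induced module in the semisimple category $\rep^0\cW(-1)$, and read off multiplicities by Frobenius reciprocity. The computation of the multiplicities is correct, and your alternative multiplicity count via $\cL_{1,1}$ (or $\cL_{2,1}$) is also fine.

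There is one genuine, if easily reparable, gap in your locality step. You argue that because the $V_{25}$-restriction of $\cF_{\cW(-1)}(\cL_{r,1})$ lies in $\ind(\cO_{25}^0)$, the identification of $\rep^0\cW(-1)$ with conformal-vertex-algebra $\cW(-1)$-modules in $\ind(\cO_{25}^0)$ forces locality. That identification goes the wrong direction for you: it says local categorical $A$-modules are exactly the genuine vertex-algebra modules, but $\cF_{\cW(-1)}(\cL_{r,1})$ is, a priori, only an object of $\rep\cW(-1)$, not yet known to be a genuine $\cW(-1)$-module. Semisimplicity of the restriction alone does not imply locality. The fix is immediate and uses exactly the parity observation you already made: since every $\cL_{k,1}$ occurring in $\cW(-1)\tens\cL_{r,1}$ has $k\equiv r\pmod 2$, one checks $h_{k,1}-h_{2n+1,1}-h_{r,1}\in\ZZ$ for all relevant $k,n$ (each $h_{2n+1,1}\in\ZZ$, and $h_{k,1}-h_{r,1}=\frac{1}{4}(r-k)(r+k+2)\in\ZZ$ when $k\equiv r\pmod 2$). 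By the balancing equation this makes the monodromy $\cM_{\cW(-1),\cL_{r,1}}$ trivial, so $\cF_{\cW(-1)}(\cL_{r,1})\in\rep^0\cW(-1)$ as needed (cf.\ \cite[Lemma 2.65]{CKM1}). With this one line added, your argument is complete.
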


As a consequence:
\begin{prop}
Every simple object of $\rep\cW(-1)$ is isomorphic to either $\cW(-1)$ or $\cX$.
\end{prop}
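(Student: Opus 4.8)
The plan is to use the induction functor $\cF_{\cW(-1)}\colon\cO_{25}\rightarrow\rep\cW(-1)$ together with Frobenius reciprocity to show that any simple $\cW(-1)$-module $M$ in $\rep\cW(-1)$ must be a quotient (hence summand) of an induced module $\cF_{\cW(-1)}(\cL_{r,1})$ for some $r\in\ZZ_+$, and then to read off from Proposition \ref{prop:W_minus_1_inductions} that the only such summands are $\cW(-1)$ and $\cX$.

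First I would observe that every object of $\rep\cW(-1)$ is, in particular, an object of $\ind(\cO_{25})$ when restricted to $V_{25}$, and hence is a (possibly infinite) direct sum of the simple modules $\cL_{r,1}$, $r\in\ZZ_+$; this is just the definition of $\ind(\cO_{25})$ as used in the construction above. So if $M$ is a nonzero object of $\rep\cW(-1)$, then as a $V_{25}$-module $M$ contains some $\cL_{r,1}$ as a submodule, giving a nonzero $V_{25}$-module map $\cL_{r,1}\rightarrow M$. By Frobenius reciprocity, $\hom_{V_{25}}(\cL_{r,1},M)\cong\hom_{\cW(-1)}(\cF_{\cW(-1)}(\cL_{r,1}),M)$, so there is a nonzero $\cW(-1)$-module map $\cF_{\cW(-1)}(\cL_{r,1})\rightarrow M$. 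If moreover $M$ is simple, this map is surjective, so $M$ is a simple quotient of $\cF_{\cW(-1)}(\cL_{r,1})$. But by Proposition \ref{prop:W_minus_1_inductions}, $\cF_{\cW(-1)}(\cL_{r,1})$ is isomorphic to $\cW(-1)^{\oplus r}$ or $\cX^{\oplus r}$ according to the parity of $r$; in either case its only simple quotient (up to isomorphism) is $\cW(-1)$ or $\cX$ respectively. Hence $M\cong\cW(-1)$ or $M\cong\cX$.

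The one point requiring a little care — and the place I would expect the only real friction — is that $\cW(-1)$-modules here are allowed to be infinite-dimensional in each conformal weight and have conformal weights unbounded below, so one must be sure the induction functor, Frobenius reciprocity, and the restriction statement (that $\rep\cW(-1)$-objects restrict to objects of $\ind(\cO_{25})$) all hold in this Ind-category setting. All of these, however, are exactly the framework of \cite{KO, CKM1} as extended to direct limit completions in \cite{CMY1} and recalled in the discussion preceding this proposition, so no new work is needed: one simply invokes that $\cF_{\cW(-1)}$ is a tensor functor satisfying Frobenius reciprocity on $\cO_{25}$, and that (by definition of $\rep\cW(-1)$ inside $\ind(\cO_{25})$) any simple object restricts to a sum of the $\cL_{r,1}$. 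With these in hand the argument above is complete.
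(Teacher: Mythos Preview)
Your overall strategy is exactly the paper's: find a simple $V_{25}$-submodule $\cL_{r,1}$ of $M$, apply Frobenius reciprocity to get a nonzero $\cW(-1)$-module map $\cF_{\cW(-1)}(\cL_{r,1})\rightarrow M$, and then read off the answer from Proposition~\ref{prop:W_minus_1_inductions}.

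There is, however, one incorrect justification. You assert that every object of $\ind(\cO_{25})$ is a (possibly infinite) direct sum of the simples $\cL_{r,1}$. This is false: $\cO_{25}$ is not semisimple (it contains, for instance, nontrivial quotients of Verma modules and logarithmic self-extensions of $\cL_{r,1}$ for $r\geq 2$), so its Ind-completion is not semisimple either. You appear to be conflating $\ind(\cO_{25})$ with $\ind(\cO_{25}^0)$. Fortunately, all that your argument actually uses is the much weaker fact that any nonzero object of $\ind(\cO_{25})$ contains some $\cL_{r,1}$ as a $V_{25}$-submodule, and this does hold: objects of $\ind(\cO_{25})$ are unions of objects of $\cO_{25}$, and every nonzero object of $\cO_{25}$ has finite length and thus contains a simple submodule. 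This is precisely how the paper phrases the step. With that correction, your proof coincides with the paper's.
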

\begin{proof}
Let $X$ be any simple object of $\rep\cW(-1)$. Since $X$ restricts to a $V_{25}$-module in $\cO_{25}$, and since every object of $\cO_{25}$ has finite length, $X$ contains a simple $V_{25}$-submodule $\cL_{r,1}$ for some $r\in\ZZ_+$. Then by Frobenius reciprocity, the $V_{25}$-module inclusion $\cL_{r,1}\hookrightarrow X$ induces a non-zero $\cW(-1)$-module homomorphism $\cF_{\cW(-1)}(\cL_{r,1})\rightarrow X$. Since $X$ is simple, this non-zero map is surjective, so that by Proposition \ref{prop:W_minus_1_inductions}, $X$ is a simple quotient of either $\cW(-1)^{\oplus r}$ or $\cX^{\oplus r}$. Thus $X$ is isomorphic to either $\cW(-1)$ or $\cX$.
\end{proof}

\begin{rem}
We can use the methods of this section to construct another simple conformal vertex algebra extension of $V_{25}$ that is analogous to the rank-one Heisenberg subalgebra $\cM(1)\subseteq L_1(\mathfrak{sl}_2)$. Again using the braid-reversed tensor equivalence $\cF:\ind(\cO_1^0)\rightarrow\ind(\cO_{25}^0)$, we define $\cM(-1):=\cF(\cM(1))$, a simple conformal vertex algebra isomorphic to $\bigoplus_{n=0}^\infty \cL_{2n+1,1}$ as a $V_{25}$-module. Alternatively, we could define $\cM(-1)$ to be the $U(1)$-orbifold of $\cW(-1)$. We can then consider the category of finite-length $\cM(-1)$-modules which restrict to $V_{25}$-modules in $\ind(\cO_{25}^0)$. Using Corollary \ref{cor:rep_0_A_rep_0_F(A)}, this category of $\cM(-1)$-modules is a rigid braided tensor category that is braid-reversed tensor equivalent to the category of finite-length $\cM(1)$-modules which restrict to $V_1$-modules in $\ind(\cO_1^0)$. In particular, the simple $\cM(-1)$-modules in this category are simple currents parametrized by continuous characters of $U(1)$. 
\end{rem}

\section{The chiral universal centralizer of \texorpdfstring{$SL_2$}{SL(2)} at level \texorpdfstring{$-1$}{-1}}\label{sec:chiral_univ_center}

In this section, we determine the representation theory of a simple conformal vertex algebra extension of $V_1\otimes V_{25}$. This algebra was constructed in \cite[Section 7]{Ar}, where it was called the \textit{chiral universal centralizer} algebra of $SL_2$ at level $-1$ and denoted $\mathbf{I}^{-1}_{SL_2}$. For a general level $k=-2+t$, the chiral universal centralizer $\mathbf{I}^k_{SL_2}$ is an extension of the tensor product of Virasoro vertex operator algebras at central charge $13\pm 6t\pm 6t^{-1}$ and can be obtained from the vertex algebra of chiral differential operators on $SL_2$ at level $k$ by a two-step process of quantum Drinfeld-Sokolov reduction. In the case $t\notin\QQ$, this construction first appeared in \cite{FS}, and an alternative explicit construction was given in \cite{FZ2}, where $\mathbf{I}^k_{SL_2}$ was called a \textit{modified regular representation} of the Virasoro algebra.

We take $k=-1,-3$. For this pair of levels, $\mathbf{I}_{SL_2}^{-1}$ is a simple conformal vertex algebra extension of $V_1\otimes V_{25}$ which is semisimple as a $V_1\otimes V_{25}$-module; see \cite[Section 5]{McR-cosets} for a proof which applies to all level pairs $-2\pm\frac{1}{p}$, $p\in\mathbb{Z}_+$, based on results from \cite{Ar} and the semisimplicity of the Kazhdan-Lusztig categories of modules for affine $\mathfrak{sl}_2$ at these levels. More specifically,
\begin{equation}\label{eqn:gluing_structure}
\mathbf{I}^{-1}_{SL_2}\cong\bigoplus_{r\in\ZZ_+} \cL_{r,1}^{(1)}\otimes\cL_{r,1}^{(25)}
\end{equation}
as a $V_1\otimes V_{25}$-module, where we use $\cL_{r,1}^{(i)}$ for $i=1,25$ to denote the simple objects of $\cO_i^0$. For $r\in\ZZ_+$, the simple $V_1\otimes V_{25}$-module $\cL_{r,1}^{(1)}\otimes\cL_{r,1}^{(25)}$ has lowest conformal weight
\begin{equation*}
\frac{1}{4}(r-1)^2-\frac{1}{4}(r+1)^2+1=-r+1\in\ZZ
\end{equation*}
by the $t=\pm 1$ cases of \eqref{eqn:h_rs_def}. Thus $\mathbf{I}^{-1}_{SL_2}$ is $\ZZ$-graded, but there is no lower bound on the conformal weights. Another way to obtain a simple conformal vertex algebra structure on \eqref{eqn:gluing_structure} uses the method of gluing vertex algebras from \cite{CKM2}. Namely, because $\cO_1^0$ and $\cO_{25}^0$ are braid-reversed equivalent rigid semisimple tensor categories by Proposition \ref{prop:braiding2}, \eqref{eqn:gluing_structure} is the canonical algebra in the Ind-category of the Deligne tensor product $\cO_1^0\otimes\cO_{25}^0$, and thus is a simple conformal vertex algebra by \cite[Theorem 7.5]{CMY1} (here we use $\otimes$ to denote the Deligne tensor product of braided tensor categories to avoid confusion with our notation $\tens$ for vertex algebraic tensor products). In particular, the existence assertion of the next theorem follows from either \cite[Section 7]{Ar} or \cite[Main Theorem 3(1)]{CKM2}, and we prove the uniqueness assertion in Appendix \ref{app:chiral_univ_cent_unique}:

\begin{thm}\label{thm:glue}
There is a unique (up to isomorphism) simple conformal vertex algebra extension of $V_1\otimes V_{25}$ with the decomposition \eqref{eqn:gluing_structure} as a $V_1\otimes V_{25}$-module.
\end{thm}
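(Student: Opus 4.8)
The plan is to reduce the uniqueness statement to a statement about commutative algebras in the braided tensor category $\cO_1^0$ (or equivalently in its Ind-completion), and then to invoke the braid-reversed tensor equivalence $\cF:\ind(\cO_1^0)\to\ind(\cO_{25}^0)$ of Proposition \ref{prop:braiding2} together with a rigidity/uniqueness argument for the underlying algebra structure. More precisely, suppose $A$ is any simple conformal vertex algebra extension of $V_1\otimes V_{25}$ with the decomposition \eqref{eqn:gluing_structure}. By the vertex operator algebra extension theory of \cite{HKL, CKM1, CMY1} applied to the Deligne product $\ind(\cO_1^0)\boxtimes\ind(\cO_{25}^0)$ (with $\boxtimes$ here denoting the Deligne tensor product), $A$ corresponds to a commutative, haploid (i.e. $\hom(\mathbf 1,A)$ one-dimensional), associative algebra object $\mathcal A$ in $\ind(\cO_1^0)\boxtimes\ind(\cO_{25}^0)$ whose underlying object is $\bigoplus_{r\in\ZZ_+}\cL_{r,1}^{(1)}\boxtimes\cL_{r,1}^{(25)}$; simplicity of $A$ forces $\mathcal A$ to have no nontrivial ideals. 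The first key step is to show that, because $\cO_1^0$ and $\cO_{25}^0$ are braid-reversed equivalent rigid semisimple tensor categories, such an $\mathcal A$ is necessarily the canonical (Frobenius/gluing) algebra of \cite{CKM2}; this is exactly the kind of statement that \cite[Main Theorem 3]{CKM2} is designed to produce, so the task is to verify its hypotheses and extract uniqueness from it.

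The second, more hands-on step, is to pin down the multiplication on $\mathcal A$ directly. Write $\mathcal A=\bigoplus_r A_r$ with $A_r=\cL_{r,1}^{(1)}\boxtimes\cL_{r,1}^{(25)}$, and let $\mu:\mathcal A\boxtimes\mathcal A\to\mathcal A$ be the multiplication. By semisimplicity and the fusion rules \eqref{eqn:sl2_fusion}, which hold identically in $\cO_1^0$ and $\cO_{25}^0$, the component $\mu_{r,r';k}:A_r\boxtimes A_{r'}\to A_k$ lives in a space that is at most one-dimensional: indeed $\hom(\cL_{r,1}^{(1)}\boxtimes\cL_{r',1}^{(1)},\cL_{k,1}^{(1)})\otimes\hom(\cL_{r,1}^{(25)}\boxtimes\cL_{r',1}^{(25)},\cL_{k,1}^{(25)})$ is one-dimensional when $k$ lies in the admissible range and zero otherwise. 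Haploidity normalizes $\mu_{r,1;r}$ and $\mu_{1,r;r}$ to be the unit maps, and then commutativity together with the braid-reversal (the braiding on one factor is the inverse of that on the other, by Theorem \ref{prop:braiding1} and Proposition \ref{prop:braiding2}) constrains the remaining scalars. The crucial input is that associativity of $\mu$ — the pentagon-type compatibility, which in the semisimple setting reduces to a system of equations on these scalars governed by the $6j$-symbols of $\cC(-1,\mathfrak{sl}_2)$ — together with the nondegeneracy forced by simplicity of $\mathcal A$, admits a unique solution up to rescaling the isomorphisms $A_r\cong A_r$. Because each $A_r$ is simple, any such rescaling is an overall automorphism absorbed into an isomorphism of algebras, so $\mathcal A$ is unique up to isomorphism, hence so is $A$.

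I expect the main obstacle to be the associativity/nondegeneracy analysis in the second step: one must show that the system of scalar equations coming from associativity of $\mu$ (equivalently, that $\mathcal A$ is a well-defined algebra in the Deligne product) together with haploidity and simplicity has, up to the gauge freedom of rescaling the $A_r$, exactly one solution — and in particular that a solution exists, which is where the braid-reversal of the braidings is essential (a matching rather than mismatched pair of braidings is what makes the canonical algebra exist and be associative). Rather than solving the $6j$-system by brute force, the cleanest route is to cite the general structure theorem: in \cite{CKM2} it is shown that for braid-reversed equivalent rigid semisimple braided tensor categories $\cC$ and $\til\cC$ the canonical algebra $\bigoplus_{X\in\mathrm{Irr}(\cC)} X\boxtimes F(X)$ is the \emph{unique} (up to isomorphism) haploid commutative algebra in $\ind(\cC\boxtimes\til\cC)$ with that underlying object, provided it is simple. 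Thus the bulk of the proof is checking that \eqref{eqn:gluing_structure} has this form with $F=\cF^{-1}$ (which it does, since $\cF(\cL_{r,1}^{(1)})\cong\cL_{r,1}^{(25)}$), that $\cO_1^0$ and $\cO_{25}^0$ satisfy the finiteness and rigidity hypotheses of \cite{CKM2} (they are rigid semisimple with finitely many simples up to the needed direct-limit completion), and that the algebra in question is simple (immediate from \eqref{eqn:gluing_structure} and the fact that each summand is a simple $V_1\otimes V_{25}$-module, together with Frobenius reciprocity as in the proofs surrounding Proposition \ref{prop:W_minus_1_inductions}). We record the detailed verification in Appendix \ref{app:chiral_univ_cent_unique}.
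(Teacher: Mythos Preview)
Your proposal has a genuine gap: you are leaning on a uniqueness statement that is not actually in \cite{CKM2}. Main Theorem~3 there gives \emph{existence} of the canonical algebra $\bigoplus_X X\boxtimes F(X)$ and describes its representation theory, but it does not assert that any haploid simple commutative algebra on that underlying object is isomorphic to the canonical one. The paper itself invokes \cite{CKM2} only for existence and then devotes Appendix~\ref{app:chiral_univ_cent_unique} to uniqueness precisely because no such off-the-shelf result is available. Your alternative second step (solving the associativity constraints on the scalars $c_{r,r'}^k$ up to gauge) could in principle be made to work, but you do not carry it out; you acknowledge it as the ``main obstacle'' and then retreat to the unsubstantiated citation. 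As written, the argument is circular: the hard part is deferred to a reference that does not contain it.

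The paper's route is quite different and more concrete. It first uses the $\ZZ/2\ZZ$-involution $(-1)^{r-1}$ to split $V=V^0\oplus V^1$, with $V^0$ a simple subalgebra and $V^1$ a simple $V^0$-module. It then observes that under the tensor equivalence $\cO_{25}^0\cong(\rep\mathfrak{sl}_2)^\tau$, a simple commutative algebra structure on $V^0$ in $\ind(\cO_1^0\otimes\cO_{25}^0)$ becomes a simple $PSL_2(\CC)$-equivariant vertex operator algebra structure on $\bigoplus_n V(2n)\otimes\cL_{2n+1,1}^{(1)}$, which by the lattice-theoretic Theorem~\ref{thm:2nd:L1_sl_2_unique} must be $L_1(\mathfrak{sl}_2)$; similarly $V^1$ is forced by Theorem~\ref{thm:mod_for_L1_sl_2_unique}. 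Finally, the remaining freedom in $Y_V\vert_{V^1\otimes V^1}$ is a single scalar (the relevant intertwining operator space is one-dimensional), and this scalar is absorbed by the simple-current-extension argument of \cite[Proposition~5.3]{DM}. If you want to salvage your approach, you would need to actually prove the general uniqueness of the canonical algebra in braid-reversed Deligne products, which is a nontrivial result in its own right; otherwise, follow the paper's reduction to the concrete uniqueness of $L_1(\mathfrak{sl}_2)$.
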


We now use the theory of vertex algebra extensions developed in \cite{HKL, CKM1, CKM2, CMY1} to describe the representations of $\mathbf{I}_{SL_2}^{-1}$ in $\ind(\cO_1^0\otimes\cO_{25}^0)$. For notational simplicity, we set $A=\mathbf{I}^{-1}_{SL_2}$. Let $\rep A$ denote the category of (possibly non-local) $A$-modules that restrict to $V_1\otimes V_{25}$-modules in $\ind(\cO_1^0\otimes \cO_{25}^0)$. Then $\rep^0 A$ is the braided tensor category of local $A$-modules in $\ind(\cO_1^0\otimes \cO_{25}^0)$. Let $\cO_A$ denote the category of finite-length $A$-modules that restrict to $V_1\otimes V_{25}$-modules in $\ind(\cO_1^0\otimes \cO_{25}^0)$. Obviously, $\cO_A$ is a subcategory of $\rep^0 A$. 

Similar to the previous section, there is an induction functor
$\cF_A: \cO_1^0\otimes \cO_{25}^0 \rightarrow  \rep A$ satisfying Frobenius reciprocity:
\begin{equation}\label{Frob}
\hom_{A}(\cF_A(M), W) \xrightarrow{\cong} \hom_{V_1\otimes V_{25}}(M, W)
\end{equation}
for any object $M$ of $\cO_1^0 \otimes \cO_{25}^0$ and $W$ of $\rep A$. To simplify notation, we denote the simple objects of $\cO_1^0\otimes\cO_{25}^0$ by
\[
M_{r,r'} : = \cL^{(1)}_{r,1} \otimes \cL_{r',1}^{(25)}.
\]
for $r,r' \in \ZZ_+$.
\begin{thm}\label{thm:chiral_univ_center_rep_theory}
Properties of the categories $\repA$ and $\cO_A$ are as follows:
\begin{itemize}
\item[(1)] The category $\repA$ is semisimple, and the induced modules $\cW_r : = \cF_A(M_{r,1})$, $r \in \ZZ_+$, are simple and exhaust all the simple objects in $\rep A$ up to isomorphism.
\item[(2)] The tensor products of simple modules are as follows: for $r, r' \in \ZZ_+$, 
\begin{equation}
\cW_r \tens \cW_{r'} \cong \bigoplus_{\substack{k=\vert r-r'\vert+1\\ k+r+r'\equiv 1\,(\mathrm{mod}\,2)}}^{r+r'-1} \cW_{k}.
\end{equation} 
\item[(3)] Induced modules have the following decompositions: for $r,r' \in \ZZ_+$, 
\begin{equation}\label{eqn:induced}
\cF_A(M_{r,r'}) \cong \bigoplus_{\substack{k=\vert r-r'\vert+1\\ k+r+r'\equiv 1\,(\mathrm{mod}\,2)}}^{r+r'-1} \cW_{k}.
\end{equation}

\item[(4)] The category $\cO_A$ has simple objects $\cW_{2n+1}$, $n \in \NN$, and is braided  tensor equivalent to $\rep PSL_2(\CC)$. In particular, $\cO_A$ is rigid and symmetric.
\end{itemize}
\end{thm}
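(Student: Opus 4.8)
The plan is to exploit that $A:=\mathbf{I}^{-1}_{SL_2}$ is the canonical algebra in the Deligne product $\cO_1^0\otimes\cO_{25}^0$ of the braid-reversed equivalent rigid semisimple categories of Proposition~\ref{prop:braiding2}, so that the induction functor $\cF_A$, Frobenius reciprocity~\eqref{Frob}, and the gluing theory of \cite{CKM2, CMY1} do most of the work.

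\textbf{Parts (1)--(3).} First I would invoke, as in the arguments of \cite[Lemma 7.3 and Proposition 7.4]{MY} together with the general theory of \cite{CKM2, CMY1}, that $\repA$ is semisimple and that every simple object is a direct summand of some $\cF_A(M_{r,r'})$. Restricting $\cW_k=\cF_A(M_{k,1})$ to $V_1\otimes V_{25}$ gives $\cW_k\cong\bigoplus_{s\in\ZZ_+}(\cL_{s,1}^{(1)}\tens\cL_{k,1}^{(1)})\otimes\cL_{s,1}^{(25)}$, and since the external factor $\cL_{s,1}^{(25)}$ matches the unit $\cL_{1,1}^{(25)}$ only for $s=1$, Frobenius reciprocity yields $\hom_A(\cW_r,\cW_k)\cong\hom_{V_1}(\cL_{r,1}^{(1)},\cL_{k,1}^{(1)})\cong\delta_{r,k}\CC$; hence each $\cW_r$ is simple, the $\cW_r$ are mutually non-isomorphic, and they exhaust the simple objects, proving (1). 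The same computation with $M_{1,r'}=\cL_{1,1}^{(1)}\otimes\cL_{r',1}^{(25)}$ shows $\cF_A(M_{1,r'})\cong\cW_{r'}$. Because $\cF_A$ is a tensor functor and $M_{r,1}\otimes M_{r',1}\cong\bigoplus_k M_{k,1}$ in $\cO_1^0\otimes\cO_{25}^0$, where $k$ runs as in \eqref{eqn:sl2_fusion} (the $\mathfrak{sl}_2$-type fusion rules of $\cO_1^0$), we get $\cW_r\tens\cW_{r'}\cong\cF_A\big(\bigoplus_k M_{k,1}\big)\cong\bigoplus_k\cW_k$, which is (2); and since $M_{r,r'}\cong M_{r,1}\otimes M_{1,r'}$, also $\cF_A(M_{r,r'})\cong\cW_r\tens\cW_{r'}\cong\bigoplus_k\cW_k$, which is (3).

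\textbf{Part (4).} For the subcategory $\cO_A$ I would begin by deciding which $\cW_r$ are local. Since $\cL_{r,1}^{(1)}\otimes\cL_{r,1}^{(25)}$ has lowest conformal weight $\tfrac14(r-1)^2+1-\tfrac14(r+1)^2=1-r\in\ZZ$, the algebra $A=\bigoplus_r A_r$ has trivial twist, $\theta_{A_r}=e^{2\pi i(1-r)}=\Id$, so by the locality criterion for induced modules (see \cite{CKM1}) the module $\cF_A(M_{k,1})$ is local iff $\theta$ is constant on the simple summands of $A\tens M_{k,1}$. These summands are $\cL_{j,1}^{(1)}\otimes\cL_{r,1}^{(25)}$ with $|r-k|+1\le j\le r+k-1$ and $j\equiv r+k-1\pmod 2$, each of twist $\exp\big(\tfrac{\pi i}{2}(j-r-2)(j+r)\big)$. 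For $k$ odd we have $j\equiv r\pmod 2$, so $(j-r-2)(j+r)\equiv 0\pmod 4$ and all these twists equal $1$, whereas for $k$ even the summands at $r=1$ (with $j=k$) and $r=k$ (with $j=1$) have twists whose ratio is $\exp(\pi i(k^2-1))=-1$. Hence $\cW_k\in\rep^0 A$ exactly for $k$ odd; and since $\rep^0 A$ is a semisimple full subcategory of $\repA$ (summands of local modules are local), $\cO_A$ is the category of finite direct sums of the $\cW_{2n+1}$, $n\in\NN$, and the fusion rules of (2) restrict to those of $\{\cW_{2n+1}\}$. For $k=2n+1$ the parity observation above shows all conformal weights of $\cW_{2n+1}$ are integers, so $\theta_{\cW_{2n+1}}=e^{2\pi i L_0}=\Id$; hence in the ribbon category $\rep^0 A$ the double braiding $\cR_{W',W}\circ\cR_{W,W'}=\theta_{W\tens W'}\circ(\theta_W\tens\theta_{W'})^{-1}$ is the identity on $\cO_A$, so $\cO_A$ is symmetric, and it is rigid as a full tensor subcategory, closed under duals, of the rigid category $\repA$. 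By parts (1)--(2) and Proposition~\ref{prop: tens-equiv}, $\repA\cong\cO_1^0\cong(\rep\mathfrak{sl}_2)^\tau$ with $\cW_r\leftrightarrow V(r-1)$; under this equivalence $\cO_A$ becomes the full tensor subcategory of $(\rep\mathfrak{sl}_2)^\tau$ on the objects $V(2n)$, $n\in\NN$, and since the abelian $3$-cocycle $\tau$ on $\ZZ/2\ZZ$ is trivial on degree-$0$ objects, this subcategory agrees as a tensor category with the subcategory of $\rep\mathfrak{sl}_2$ on the $V(2n)$, namely $\rep PSL_2(\CC)$. As $PSL_2(\CC)$ has trivial center, $\rep PSL_2(\CC)$ admits a unique symmetric braided structure, so the equivalence $\cO_A\cong\rep PSL_2(\CC)$ is one of symmetric braided tensor categories.

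\textbf{Expected main obstacle.} The conceptual steps are essentially formal once the gluing theory is in place; the one genuinely computational point is the locality determination, which requires the correct induced-module criterion and careful bookkeeping of the conformal weights of all summands $\cL_{j,1}^{(1)}\otimes\cL_{r,1}^{(25)}$ of $A\tens M_{k,1}$. A subtlety worth flagging is that $\repA$ is not braided, so the equivalence $\repA\cong\cO_1^0$ is only monoidal and the braided structure on $\cO_A$ must be pinned down separately, via the vanishing of the twists, rather than transported along that equivalence.
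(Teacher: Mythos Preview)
Your approach to (1)--(3) is correct and essentially parallel to the paper's, with one stylistic difference: you invoke semisimplicity of $\rep A$ up front from the gluing theory of \cite{CKM2, CMY1} and then deduce simplicity of each $\cW_r$ from $\mathrm{End}_A(\cW_r)=\CC$, whereas the paper first proves simplicity of $\cW_r$ by a direct submodule argument and then deduces semisimplicity. Similarly, to identify $\cF_A(M_{1,r'})\cong\cW_{r'}$ the paper quotes \cite[Key Lemma 4.2]{CKM2}, while your Frobenius-reciprocity computation also works (given semisimplicity): one checks $\mathrm{End}_A(\cF_A(M_{1,r'}))=\CC$, so this module is simple, and then $\hom_A(\cF_A(M_{1,r'}),\cW_k)=\delta_{r',k}\CC$. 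Your locality computation in (4) is correct and equivalent to the paper's, which checks the same balancing condition but phrases it as $h_{r,1}^{(1)}+h_{r',1}^{(1)}-h_{k,1}^{(1)}\in\ZZ$.

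The one genuine gap is in your braided-equivalence argument for (4). You correctly observe that $\cO_A$ is symmetric (via the trivial twist) and that $\cO_A\cong\rep PSL_2(\CC)$ as tensor categories, but your inference from ``$PSL_2(\CC)$ has trivial center'' to ``$\rep PSL_2(\CC)$ admits a unique braiding'' is asserted, not proved, and is not standard enough to pass without a reference or argument. (The claim is true---braidings on $\rep G$ for reductive $G$ are governed by $2$-torsion in $Z(G)$---but this needs justification.) The paper sidesteps this entirely: it uses that induction $\cF_A$, restricted to the source subcategory $\langle\cL_{2n+1,1}^{(1)}\,\vert\,n\in\NN\rangle\subseteq\cO_1^0$, is a \emph{braided} tensor functor into $\rep^0 A$ by \cite[Theorem 2.67]{CKM1}, is fully faithful by \cite[Lemma 6.1]{CKM2}, and is essentially surjective onto $\cO_A$; the braided equivalence of the source with $\rep PSL_2(\CC)$ then comes from Proposition~\ref{prop:braiding2}. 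This route is cleaner because it transports the braiding along a functor that is braided by construction, rather than appealing to a uniqueness statement that you would otherwise need to establish separately.
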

\begin{proof}
Since $\cF_A$ is monoidal, the $\mathfrak{sl}_2$-type fusion rules of $V_1$-modules imply
\begin{align*}
\cW_r \tens \cW_{r'} = \cF_{A}(M_{r,1}) \tens \cF_A(M_{r',1}) \cong \cF_A\bigg(\bigoplus_{\substack{k=\vert r-r'\vert+1\\ k+r+r'\equiv 1\,(\mathrm{mod}\,2)}}^{r+r'-1} M_{k,1}\bigg) = \bigoplus_{\substack{k=\vert r-r'\vert+1\\ k+r+r'\equiv 1\,(\mathrm{mod}\,2)}}^{r+r'-1} \cW_{k},
\end{align*}
proving (2). To prove (3), first note $\cF_A(M_{r,r'}) \cong \cF_A(M_{r,1}) \tens \cF_A(M_{1, r'})$ because $\cF_A$ is monoidal. Then because $\cO_1^0$ and $\cO_{25}^0$ are semisimple ribbon tensor categories and the modules $\cL_{r,1}^{(i)}$ for $i=1,25$ are self-dual, \cite[Key Lemma~4.2]{CKM2} (see also \cite[Remark 4.3]{CKM2}) shows that $\cF_A(M_{r,1}) \cong \cF_A(M_{1,r})$. Consequently,
\begin{equation*}
\cF_A(M_{r,r'}) \cong \cF_A(M_{r,1}) \tens \cF_A(M_{1, r'}) \cong \cW_r \tens \cW_{r'} \cong \bigoplus_{\substack{k=\vert r-r'\vert+1\\ k+r+r'\equiv 1\,(\mathrm{mod}\,2)}}^{r+r'-1} \cW_{k}.
\end{equation*}

To prove (1), Frobenius reciprocity \eqref{Frob} and the $\mathfrak{sl}_2$-type fusion rules of $V_1$-modules imply
\begin{align}
\hom_A(\cW_r, \cW_s) &= \hom_A(\cF_A(M_{r,1}), \cF_A(M_{s,1}))\nonumber\\ 
&\cong\hom_{V_1\otimes V_{25}}\bigg(M_{r,1},\bigoplus_{r' \in \ZZ_+}\bigoplus_{\substack{k=\vert s-r'\vert+1\\ k+s+r'\equiv 1\,(\mathrm{mod}\,2)}}^{s+r'-1} M_{k,r'}\bigg)
\cong \delta_{r,s}\cdot\CC. \label{schur} 
\end{align}
Now if $W\subseteq\cW_r$ is a non-zero submodule, then $W$ contains an irreducible $V_1\otimes V_{25}$-submodule, say $M_{r', r''}$ for $r', r'' \in \ZZ_+$. By \eqref{Frob}, there is a non-zero $A$-module map $\cF_A(M_{r', r''}) \rightarrow W \hookrightarrow \cW_r$. So because $\cF_A(M_{r', r''})$ is a direct sum of certain $\cW_k$, $k \in \ZZ_+$, by \eqref{eqn:induced}, and because of \eqref{schur}, $\cF_A(M_{r', r''})$ must contain a copy of $\cW_r$ and furthermore there is a non-zero composition $\cW_r \rightarrow W \hookrightarrow \cW_r$. By \eqref{schur}, this $A$-module map is a multiple of the identity and is in particular surjective. Thus $W=\cW_r$, proving $\cW_r$ is a simple $A$-module.

Now take an arbitrary object $X$ of $\rep A$; it is a direct sum of irreducible $V_1\otimes V_{25}$-modules $M_{r,r'}$, $r,r'\in\ZZ_+$, since it is an object of $\ind(\cO_1^0\otimes\cO_{25}^0)$. By Frobenius reciprocity, $X$ is a sum of quotients of induced modules $\cF_A(M_{r,r'})$, and it then follows from the decomposition \eqref{eqn:induced} that $X$ is the sum, and thus also the direct sum, of submodules $\cW_r$ for various $r \in \ZZ_{+}$. This completes the proof of (1).

To prove (4), \cite[Lemma~2.65]{CKM1} shows that a simple module $\cW_r$ is an object of $\cO_A$ if and only if the monodromy isomorphism $\cM_{A, M_{r,1}}$ is the identity on $A\tens M_{r,1}$. For $r' \in \ZZ_+$,
\[
M_{r,1} \tens M_{r',r'} = \bigoplus_{\substack{k=\vert r-r'\vert+1\\ k+r+r'\equiv 1\,(\mathrm{mod}\,2)}}^{r+r'-1} M_{k,r'}.
\]
Thus by the balancing equation and conformal weights, $\cM_{A, M_{r,1}} = \Id_{A\tens M_{r,1}}$ if and only if
\[
h_{r,1}^{(1)} + h^{(1)}_{r',1} - h^{(1)}_{k,1} \in \ZZ
\]
for all $r' \in \ZZ_+$ and $|r-r'|+1 \leq k \leq r+r'-1$ such that $k+r+r'\equiv 1\,(\mathrm{mod}\,2)$. It turns out that $r \equiv 1\, (\mathrm{mod}\, 2)$.

Finally, by Proposition \ref{prop:braiding2}, the subcategory $\langle\cL_{2n+1,1}^{(1)}\,\vert\,n\in\NN\rangle\subseteq\cO_1^0$ containing the indicated simple objects is braided tensor equivalent to $\rep PSL_2(\CC)$.
Also, the composition
\begin{equation*}
\langle\cL_{2n+1,1}^{(1)}\,\vert\,n\in\NN\rangle \xrightarrow{M\mapsto M\otimes\cL_{1,1}^{(25)}}\cO_1^0\otimes\cO_{25}^0\xrightarrow{\cF_A} \repA
\end{equation*}
is a braided tensor functor (see for example \cite[Theorem 2.67]{CKM1}) which is fully faithful \cite[Lemma 6.1]{CKM2} and essentially surjective onto $\cO_A$. Thus $\cO_A \cong \rep PSL_2(\CC)$ as braided tensor categories.
\end{proof}

\begin{rem}
As in the last paragraph of the above proof, induction and Proposition \ref{prop:braiding2} show that the subcategory of finite-length objects in $\repA$ is tensor equivalent to $\cC(-1,\mathfrak{sl}_2)$. Since the entire tensor category $\repA$ is not naturally braided (although it is braidable since $\cC(-1,\mathfrak{sl}_2)$ is), we do not view induction as a braided tensor functor in this case.
\end{rem}

\begin{rem}\label{rem:generic_ch_univ_cent}
We can similarly determine the representation theory of $\mathbf{I}_{SL_2}^k$ at generic level, that is, $k=-2+t$ where $t\notin\mathbb{Q}$. In this case, $\mathbf{I}_{SL_2}^k$ is an extension of $V_{c(t)}\otimes V_{c(-t)}$, where $c(\pm t)=13\mp 6t\mp 6t^{-1}$, and
\begin{equation*}
\mathbf{I}_{SL_2}^k\cong\bigoplus_{s\in\ZZ_+} \cL_{1,s}^{(c(t))}\otimes\cL_{1,s}^{(c(-t))}
\end{equation*}
as a $V_{c(t)}\otimes V_{c(-t)}$-module. Moreover, $\cO_{c(\pm t)}$ are rigid semsimple braided tensor categories \cite{CJORY} with $\mathfrak{sl}_2$-type fusion rules \cite{FZ2}, and the Deligne product category $\cO_{c(t)}\otimes\cO_{c(-t)}$ has simple objects $M_{(r,s),(r',s')}:=\cL_{r,s}^{c(t)}\otimes\cL_{r',s'}^{c(-t)}$. Then the following results have proofs similar to Theorem \ref{thm:chiral_univ_center_rep_theory}:
\begin{enumerate}
\item The category $\rep \mathbf{I}_{SL_2}^k$ of (possibly non-local) $\mathbf{I}_{SL_2}^k$-modules which restrict to $V_{c(t)}\otimes V_{c(-t)}$-modules in $\ind(\cO_{c(t)}\otimes\cO_{c(-t)})$ is semisimple, and the induced modules $\cW_{r,r'}^{s}:=\mathcal{F}_{\mathbf{I}_{SL_2}^k}(M_{(r,s),(r',1)})$ for $r,r',s\in\ZZ_+$ are simple and exhaust the simple objects of $\rep\mathbf{I}_{SL_2}^k$ up to isomorphism.

\item For $r_1,r_1',s_1,r_2,r_2',s_2\in\ZZ_+$,
\begin{equation*}
\cW_{r_1,r_1'}^{s_1}\tens\cW_{r_2,r_2'}^{s_2}\cong\bigoplus_{\substack{k=\vert r_1-r_2\vert+1\\ k+r_1+r_2\equiv 1\,(\mathrm{mod}\,2)}}^{r_1+r_2-1}\bigoplus_{\substack{k'=\vert r_1'-r_2'\vert+1\\ k'+r_1'+r_2'\equiv 1\,(\mathrm{mod}\,2)}}^{r_1'+r_2'-1}\bigoplus_{\substack{\ell=\vert s_1-s_2\vert+1\\ \ell+s_1+s_2\equiv 1\,(\mathrm{mod}\,2)}}^{s_1+s_2-1} \cW_{k,k'}^{\ell}.
\end{equation*}

\item For $r,s,r',s'\in\ZZ_+$,
\begin{equation*}
\cF_{\mathbf{I}^k_{SL_2}}(M_{(r,s),(r',s')})\cong\bigoplus_{\substack{\ell=\vert s-s'\vert+1\\ \ell+s+s'\equiv 1\,(\mathrm{mod}\,2)}}^{s+s'-1} \cW_{r,r'}^\ell.
\end{equation*}

\item The category of finite-length (local) $\mathbf{I}^k_{SL_2}$-modules which restrict to $V_{c(t)}\otimes V_{c(-t)}$-modules in $\ind(\cO_{c(t)}\otimes\cO_{c(-t)})$ is a rigid semisimple braided tensor category with simple objects $\cW_{r,r'}^1$ such that $r\equiv r'\,(\mathrm{mod}\,2)$.

\end{enumerate}
Note that the subcategory of finite-length objects in $\rep\mathbf{I}_{SL_2}^k$ for $k=-2+t$, $t\notin\QQ$, is the Deligne product of three tensor subcategories with $\mathfrak{sl}_2$-type fusion rules. Via the induction functor, these three subcategories are tensor equivalent to 
\begin{equation*}
\cO_{c(t)}^L:=\langle \cL_{r,1}^{(c(t))}\,\vert\,r\in\ZZ_+\rangle,\quad\cO_{c(-t)}^L:=\langle\cL_{r',1}^{(c(-t))}\,\vert\,r'\in\ZZ_+\rangle,\quad\cO_{c(t)}^R:=\langle\cL_{1,s}^{(c(t))}\,\vert\,s\in\ZZ_+\rangle.
\end{equation*}
All three of these categories are equivalent to quantum group categories $\cC(q,\mathfrak{sl}_2)$ by \cite[Theorem $A_{\infty}$]{KW}. We could determine the respective values of $q$ by calculating the intrinsic dimension of the generating simple object in each category. Alternatively, we can use the heterogeneous vertex operator algebras of \cite[Theorem 4.3]{FS} (called \textit{equivariant affine $W$-algebras} in \cite[Section 6]{Ar}) and the (braid-reversing) tensor equivalence of \cite[Main Theorem 2]{CKM2} to show that these three Virasoro categories are tensor equivalent to Kazhdan-Lusztig categories for affine $\mathfrak{sl}_2$ at suitable levels, which in turn are equivalent to $\cC(q,\mathfrak{sl}_2)$ at suitable $q$ \cite{KL}. For example, $\cO_{c(t)}^L$ will be tensor equivalent to the Kazhdan-Lusztig category for affine $\mathfrak{sl}_2$ at level $-2-t^{-1}$, which is equivalent to $\cC(e^{-\pi i t},\mathfrak{sl}_2)\cong\cC(e^{\pi i t},\mathfrak{sl}_2)$. These tensor equivalences are also given in \cite[Proposition 5.5.2]{CJORY}, except that we need to make two corrections in \cite[Section~5.3]{CJORY}: $t = k+2$ should be changed to $t^{-1} = k+2$ and the $W$-algebra module ${\rm \bf L}_k(\chi_{\mu - 2(k+h^{\vee})\nu})$ should be ${\rm \bf L}_k(\chi_{\mu - (k+h^{\vee})\nu})$.
\end{rem}

\appendix

\section{Algebras in equivalent tensor categories}\label{app:alg_in_tens_cat}

The tensor-categorical results in this appendix are straightforward and certainly known, but we include some details from their proofs to make this paper more self-contained. We also use this appendix to recall notation from, for example, \cite{KO, CKM1} for algebras in tensor categories and their modules.

Let $\cC$ be a braided tensor category with tensor product bifunctor $\tens$ and unit object $\vac$. We recall (from \cite{KO, CKM1}, for example) that a (commutative) $\cC$-algebra $(A,\mu_A,\iota_A)$ is an object $A$ of $\cC$ equipped with multiplication and unit morphisms
\begin{equation*}
\mu_A: A\tens A\longrightarrow A,\qquad\iota_A: \vac\longrightarrow A
\end{equation*}
which satisfy natural unit and associativity (and commutativity) axioms. An isomorphism between two $\cC$-algebras $(A,\mu_A,\iota_A)$ and $(B,\mu_B,\iota_B)$ is a $\cC$-morphism $f: A\rightarrow B$ such that $f\circ\mu_A =\mu_B\circ(f\tens f)$ and $f\circ\iota_A=\iota_B$.

 Given a commutative algebra $A$, the tensor category $\repA$ of (left) $A$-modules has objects $(X,\mu_X)$ where $X$ is an object of $\cC$ and $\mu_X: A\tens X\rightarrow X$ is a morphism satisfying natural left unit and associativity axioms. The category $\repA$ has a braided tensor subcategory $\rep^0A$ consisting of all objects $(X,\mu_X)$ such that
\begin{equation*}
\mu_X\circ\cM_{A,X}=\mu_X,
\end{equation*}
where $\cM_{A,X}:=\cR_{X,A}\circ\cR_{A,X}$ is the natural double braiding, or monodromy, isomorphism in $\cC$. Given an object $(X,\mu_X)$ of $\repA$, an $A$-submodule of $X$ is an object $(W,\mu_W)$ of $\repA$ equipped with a $\repA$-injection $i: W\rightarrow X$. We say that $(X,\mu_X)$ is simple if every submodule $i: W\rightarrow X$ is either $0$ or an isomorphism. If $A$ is commutative, we can identify ideals of $A$ with $A$-submodules of $A$, so that $A$ is simple as a $\cC$-algebra if and only if it simple as an $A$-module.

Let $\cF:\cC\rightarrow\cD$ be a tensor functor, so that there is an isomorphism $\varphi:\vac_\cD\rightarrow\cF(\vac_\cC)$ and a natural isomorphism
\begin{equation}\label{eqn:F}
F: \tens_\cD\circ(\cF\times\cF)\longrightarrow\cF\circ\tens_\cC
\end{equation}
which are suitably compatible with the unit and associativity isomorphisms of $\cC$ and $\cD$. If $(A,\mu_A,\iota_A)$ is a $\cC$-algebra, then $(\cF(A),\mu_{\cF(A)},\iota_{\cF(A)})$ is a $\cD$-algebra, where
\begin{equation*}
\mu_{\cF(A)}: \cF(A)\tens_\cD\cF(A)\xrightarrow{F_{A,A}} \cF(A\tens_\cC A) \xrightarrow{\cF(\mu_A)} \cF(A)
\end{equation*}
and
\begin{equation*}
\iota_{\cF(A)}: \vac_\cD\xrightarrow{\varphi}\cF(\vac_\cC)\xrightarrow{\cF(\iota_A)} \cF(A).
\end{equation*}
If $A$ is commutative and $\cF$ is braided or braid-reversing, then $\cF(A)$ is also commutative. 

If $(X,\mu_X)$ is an object of $\repA$, then $(\cF(X),\mu_{\cF(X)})$ is an object of $\rep \cF(A)$, where
\begin{equation*}
\mu_{\cF(X)}: \cF(A)\tens_\cD\cF(X)\xrightarrow{F_{A,X}} \cF(A\tens_\cC X)\xrightarrow{\cF(\mu_X)} \cF(X).
\end{equation*}
Moreover, if $f: X_1\rightarrow X_2$ is a morphism in $\repA$, then $\cF(f): \cF(X_1)\rightarrow\cF(X_2)$ is a morphism in $\rep\cF(A)$. If $A$ is commutative, $\cF$ is braided or braid-reversing, and $(X,\mu_X)$ is an object of $\rep^0A$, then $(\cF(X),\mu_{\cF(X)})$ is an object of $\rep^0\cF(A)$. Thus if $\cF$ is braided or braid-reversing, it restricts to a functor from $\rep^0 A$ to $\rep^0\cF(A)$. In the case that $\cF$ is an equivalence of categories, we get:
\begin{prop}\label{prop:F_equiv_on_Rep_A}
Let $\cF:\cC\rightarrow\cD$ be a braided or braid-reversing tensor equivalence. If $A$ is a commutative $\cC$-algebra, then $\cF:\rep^0 A\rightarrow\rep^0\cF(A)$ is an equivalence of categories.
\end{prop}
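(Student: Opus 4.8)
The plan is to exploit a quasi-inverse of $\cF$ together with the elementary fact that an isomorphism of commutative algebras induces an equivalence between their categories of local modules. Since $\cF$ is a tensor equivalence, it admits a quasi-inverse $\cG:\cD\rightarrow\cC$ which is again a tensor equivalence, braided if $\cF$ is braided and braid-reversing if $\cF$ is braid-reversing; let $\eta:\Id_\cC\Rightarrow\cG\cF$ and $\epsilon:\cF\cG\Rightarrow\Id_\cD$ be the unit and counit of the equivalence. By the discussion preceding the proposition, $\cF$ restricts to a functor $\rep^0 A\rightarrow\rep^0\cF(A)$ and, for any commutative $\cD$-algebra $B$, $\cG$ restricts to a functor $\rep^0 B\rightarrow\rep^0\cG(B)$; in particular we have $\cG:\rep^0\cF(A)\rightarrow\rep^0\cG(\cF(A))$.

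First I would show that $\eta_A: A\rightarrow\cG(\cF(A))$ is an isomorphism of $\cC$-algebras, where $\cG(\cF(A))$ carries the algebra structure built from the algebra structure of $A$ and the tensor-structure isomorphisms of $\cF$ and $\cG$. This is a coherence diagram chase expressing the standard fact that $\eta$ is a monoidal natural transformation from the identity tensor functor on $\cC$ to the composite tensor functor $\cG\cF$; the braid-reversing case is handled identically since $\cG\cF$ is braided regardless. Granting this, for any isomorphism $f:A\xrightarrow{\;\sim\;}A'$ of commutative $\cC$-algebras one obtains mutually inverse equivalences $\rep^0 A'\rightleftarrows\rep^0 A$ by transporting the module action along $f$ and $f^{-1}$; these preserve the locality condition $\mu_X\circ\cM_{A,X}=\mu_X$ because the monodromy $\cM$ is natural in both variables. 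Applying this with $f=\eta_A$ gives an equivalence $\Phi:\rep^0\cG(\cF(A))\xrightarrow{\;\sim\;}\rep^0 A$.

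Next I would verify that for every object $(X,\mu_X)$ of $\rep^0 A$ the component $\eta_X: X\rightarrow\cG(\cF(X))$ is a morphism of $A$-modules once $\cG(\cF(X))$ is viewed as an $A$-module via $\eta_A$, i.e.\ that $\eta_X$ is an isomorphism $(X,\mu_X)\rightarrow\Phi\bigl(\cG(\cF(X)),\mu_{\cG(\cF(X))}\bigr)$ in $\rep^0 A$; this is again a diagram chase using naturality of $\eta$ and the same monoidal-compatibility identities. Consequently $\eta$ becomes a natural isomorphism between $\Id_{\rep^0 A}$ and the composite $\rep^0 A\xrightarrow{\cF}\rep^0\cF(A)\xrightarrow{\cG}\rep^0\cG(\cF(A))\xrightarrow{\Phi}\rep^0 A$. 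Since $\cG:\rep^0\cF(A)\rightarrow\rep^0\cG(\cF(A))$ is the restriction of an equivalence it is fully faithful and reflects isomorphisms, and $\Phi$ is an equivalence; a standard argument then shows that $\cF:\rep^0 A\rightarrow\rep^0\cF(A)$ is fully faithful and essentially surjective, hence an equivalence. (Equivalently, one may rerun the argument with $\cG$, $\epsilon$ in place of $\cF$, $\eta$ to see that $\cF\circ\cG$ is also naturally isomorphic to the identity.)

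The main obstacle is entirely bookkeeping: carrying out the two coherence diagram chases---that $\eta_A$ respects multiplications and units, and that each $\eta_X$ respects the module actions---while keeping track of the tensor-structure isomorphisms $F$ of \eqref{eqn:F} for $\cF$ and its analogue for $\cG$ and their interaction with the unit and associativity constraints of $\cC$ and $\cD$ (and, in the braid-reversing case, with the braidings). No genuinely new idea is needed beyond the classical principle that a monoidal equivalence transports commutative algebras and their categories of (local) modules.
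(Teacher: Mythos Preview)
Your approach is correct and genuinely different from the paper's. The paper proves the three properties of an equivalence directly for the restricted functor $\cF$: essential surjectivity by taking $W\cong\cF(X)$ in $\cD$, then \emph{defining} $\mu_X$ via full faithfulness of $\cF$ as the unique morphism with $\cF(\mu_X)=\mu_{\cF(X)}\circ F_{A,X}^{-1}$, and checking the module and locality axioms by applying $\cF$ and using its faithfulness; fullness by writing $g=\cF(f)$ and verifying $f$ intertwines the $A$-actions, again by faithfulness. No quasi-inverse, no $\eta$ or $\epsilon$, no transport of algebra structure is ever invoked. Your route instead builds the quasi-inverse $\Phi\circ\cG$ on the level of module categories and shows it is two-sided. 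This is more structural and packages the statement as ``monoidal equivalences transport commutative algebras and their local module categories,'' at the cost of the extra bookkeeping you flag (monoidality of $\eta$, the triangle identities linking the $\eta$- and $\epsilon$-arguments).

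One small point to tighten: the sentence ``since $\cG:\rep^0\cF(A)\rightarrow\rep^0\cG(\cF(A))$ is the restriction of an equivalence it is fully faithful'' is circular as stated---that is precisely the proposition with $\cF$ replaced by $\cG$. Fortunately you only need $\cG$ to be \emph{faithful} on $\rep^0\cF(A)$ (which follows from faithfulness of $\cG$ on $\cD$) to deduce that $\cF$ is fully faithful from $\Phi\circ\cG\circ\cF\cong\Id$. Essential surjectivity then genuinely requires your parenthetical rerun with $\epsilon$; to splice the two halves together you should note that the triangle identity $\epsilon_{\cF(A)}\circ\cF(\eta_A)=\Id_{\cF(A)}$ is what identifies $\cF\circ\Phi$ with the transport along $\epsilon_{\cF(A)}$ composed with the $\cF$-image functor, so that $\Phi\circ\cG$ is a two-sided quasi-inverse to $\cF$.
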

\begin{proof}
To show that $\cF$ is essentially surjective onto $\rep^0\cF(A)$, suppose $(W,\mu_W)$ is an object of $\rep^0\cF(A)$. Since $\cF$ is essentially surjective onto $\cD$,  there is an isomorphism $f: W\rightarrow\cF(X)$ in $\cD$ for some object $X$ in $\cC$, and then $f: (W,\mu_W)\rightarrow(\cF(X),\mu_{\cF(X)})$ is an isomorphism in $\rep^0\cF(A)$ where 
\begin{equation*}
\mu_{\cF(X)}=f\circ\mu_W\circ(\Id_{\cF(A)}\tens_\cD f^{-1}).
\end{equation*}
 Using the full faithfulness of $\cF$, we define $\mu_X: A\tens_\cC X\rightarrow X$ to be the unique $\cC$-morphism such that 
\begin{equation*}
\cF(\mu_X)=\mu_{\cF(X)}\circ F_{A,X}^{-1}.
\end{equation*}
It is then straightforward to check that $(X,\mu_X)$ is an object of $\repA$ such that $\cF(X,\mu_X)=(\cF(X),\mu_{\cF(X)})\cong(W,\mu_W)$. For example, to prove the associativity of $\mu_X$, the faithfulness of $\cF$ implies it is enough to show
\begin{equation*}
\cF(\mu_X\circ(\Id_A\tens_\cC\mu_X)) = \cF(\mu_X\circ(\mu_A\tens_\cC\Id_X)\circ\cA_{A,A,X}),
\end{equation*}
which follows from the definitions and the associativity of $(\cF(X),\mu_{\cF(X)})$. Similarly, since $W$, equivalently $\cF(X)$, is an object of $\rep^0\cF(A)$, then
\begin{equation*}
\cF(\mu_X)=\cF(\mu_X\circ\cM_{A,X}^{\pm 1})
\end{equation*}
since $\cF$ is braided or braid-reversing, and it follows that $X$ is an object of $\rep^0A$.

Now since $\cF: \cC\rightarrow\cD$ is faithful, $\cF$ remains faithful when restricted to $\rep^0A$. Then to show that the restriction to $\rep^0 A$ is full, let $g: \cF(X_1)\rightarrow\cF(X_2)$ be a morphism in $\rep^0\cF(A)$, where $X_1$ and $X_2$ are objects of $\rep^0 A$. Then $g=\cF(f)$ for some morphism $f: X_1\rightarrow X_2$ in $\cC$, and 
\begin{align*}
\cF(f\circ\mu_{X_1}) & =g\circ\mu_{\cF(X_1)}\circ F_{A,X_1}^{-1} =\mu_{\cF(X_2)}\circ(\Id_{\cF(A)}\tens_\cD g)\circ F_{A,X_1}^{-1}\nonumber\\
& = \mu_{\cF(X_2)}\circ F_{A,X_2}^{-1}\circ\cF(\Id_A\tens_\cC f) =\cF(\mu_{X_2}\circ(\Id_A\tens_\cC f)).
\end{align*}
Since $\cF$ is faithful, it follows that $f$ is a morphism in $\rep^0 A$.
\end{proof}

Since equivalences of abelian categories preserve zero objects and thus also zero morphisms, if $\cF:\cC\rightarrow\cD$ is an equivalence, then a morphism $i$ in $\cC$ is injective if and only if $\cF(i)$ is injective in $\cD$. Consequently, $\cF$ also preserves simple objects. Thus:
\begin{cor}\label{cor:F_preserve_simple}
Let $\cF:\cC\rightarrow \cD$ be a braided or braid-reversing tensor functor. If $A$ is a commutative $\cC$-algebra, then an object $X$ of $\rep^0 A$ is simple if and only if $\cF(X)$ is a simple object of $\rep^0\cF(A)$. In particular, $A$ is a simple $\cC$-algebra if and only if $\cF(A)$ is a simple $\cD$-algebra.
\end{cor}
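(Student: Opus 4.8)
The plan is to deduce this from Proposition~\ref{prop:F_equiv_on_Rep_A} together with the elementary fact that an equivalence of abelian categories preserves and reflects zero objects, zero morphisms, and monomorphisms, hence also injections and isomorphisms. So the first step I would take is to note that $\cF$ restricts not only to an equivalence $\rep^0 A\to\rep^0\cF(A)$ (Proposition~\ref{prop:F_equiv_on_Rep_A}), but also to an equivalence $\repA\to\rep\cF(A)$: the argument is the same as for Proposition~\ref{prop:F_equiv_on_Rep_A}, with the verifications of the locality condition $\mu_X\circ\cM_{A,X}^{\pm1}=\mu_X$ simply omitted (alternatively, one transports along a quasi-inverse of $\cF$, using that its value on $\cF(A)$ is isomorphic to $A$ as a $\cC$-algebra). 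This is the version needed because, in the definition of simplicity, submodules are taken in $\repA$ rather than $\rep^0 A$.

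Next I would run the main equivalence argument. Fix an object $X$ of $\rep^0 A$ (which is in particular an object of $\repA$). For the forward direction, suppose $X$ is simple and let $j\colon W'\to\cF(X)$ be a submodule of $\cF(X)$ in $\rep\cF(A)$. By essential surjectivity of $\cF\colon\repA\to\rep\cF(A)$ there is an object $W$ of $\repA$ with an isomorphism $g\colon\cF(W)\xrightarrow{\cong}W'$, and by fullness $j\circ g=\cF(i)$ for some morphism $i\colon W\to X$ in $\repA$. Since $\cF(i)=j\circ g$ is a monomorphism and $\cF$ reflects monomorphisms, $i$ exhibits $W$ as a submodule of $X$; simplicity of $X$ then gives $i=0$ or $i$ an isomorphism, so $\cF(i)=j\circ g$, and hence $j$, is $0$ or an isomorphism, proving $\cF(X)$ simple. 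Conversely, if $\cF(X)$ is simple and $i\colon W\to X$ is a submodule of $X$ in $\repA$, then $\cF(i)\colon\cF(W)\to\cF(X)$ is a monomorphism, so $\cF(i)$ is $0$ or an isomorphism, and since $\cF$ reflects these properties so is $i$; thus $X$ is simple.

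For the final ``in particular'' clause I would invoke the fact recalled just before the corollary, that a commutative algebra is simple as a $\cC$-algebra exactly when it is simple as a module over itself. Here $\cF(A)$ is a commutative $\cD$-algebra because $\cF$ is braided or braid-reversing, and unwinding the definitions of the algebra structure $\mu_{\cF(A)}$ and of the $\cF(A)$-module structure $\mu_{\cF(X)}$ with $X=A$ shows that the $\cF(A)$-module $(\cF(A),\mu_{\cF(A)})$ is precisely $\cF$ applied to the $A$-module $(A,\mu_A)$. Applying the equivalence just proved with $X=(A,\mu_A)$ then yields that $A$ is a simple $\cC$-algebra if and only if $\cF(A)$ is a simple $\cD$-algebra.

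There is no real obstacle here; the only point needing a little care is that every submodule of $\cF(X)$ is, up to isomorphism, the image under $\cF$ of a submodule of $X$, which is where essential surjectivity and fullness of $\cF$ on the module categories (Proposition~\ref{prop:F_equiv_on_Rep_A} and its $\repA$-analogue) meet the fact that equivalences of abelian categories reflect monomorphisms.
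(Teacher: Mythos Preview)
Your proof is correct and follows essentially the same route as the paper, whose entire argument is the sentence preceding the corollary (equivalences of abelian categories preserve and reflect injections, hence simple objects). You are in fact more careful than the paper in noting that submodules are taken in $\repA$ rather than $\rep^0 A$, so that one really needs the $\repA$-analogue of Proposition~\ref{prop:F_equiv_on_Rep_A}; the paper glosses over this point.
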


The next result is that if $\cF: \cC\rightarrow\cD$ is a braided or braid-reversing tensor functor and $A$ is a commutative $\cC$-algebra, then $\cF$ is also braided or braid-reversing on $\rep^0A$. The proof is straightforward but somewhat long, so we only include the more non-trivial details:
\begin{thm}\label{thm:F_braid_tens_on_Rep_A}
Let $\cF: \cC\rightarrow\cD$ be a right exact braided, respectively braid-reversing, tensor functor. If $A$ is a commutative $\cC$-algebra, then $\cF:\rep^0 A\rightarrow\rep^0\cF(A)$ has the structure of a braided, respectively braid-reversing, tensor functor.
\end{thm}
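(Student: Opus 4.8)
The plan is to first equip the restricted functor $\cF\colon\rep^0 A\to\rep^0\cF(A)$ (already available by the discussion above) with a tensor structure isomorphism, then verify the associativity and unit coherence axioms, and finally handle the braiding. Recall that for a commutative $\cC$-algebra $A$, the relative tensor product of $A$-modules $X$ and $Y$ is the coequalizer
\[
X\tens_\cC A\tens_\cC Y\rightrightarrows X\tens_\cC Y\xrightarrow{\;\pi_{X,Y}\;} X\tens_A Y,
\]
whose two parallel arrows are built from $\mu_Y$ and from the right $A$-action on $X$ obtained from $\mu_X$ via the braiding. Since $\cF$ is right exact and $\tens_\cD$ is right exact in each variable, $\cF$ preserves this coequalizer; transporting the whole diagram through the (iterated) tensor structure isomorphism $F$ of $\cF$ on $\cC$ identifies it with the coequalizer defining $\cF(X)\tens_{\cF(A)}\cF(Y)$, where the $\cF(A)$-module structures on $\cF(X)$ and $\cF(Y)$ are $\mu_{\cF(X)}$ and $\mu_{\cF(Y)}$ as recalled earlier. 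This produces a natural isomorphism $F^A_{X,Y}\colon\cF(X)\tens_{\cF(A)}\cF(Y)\xrightarrow{\ \sim\ }\cF(X\tens_A Y)$, uniquely characterized by $F^A_{X,Y}\circ\pi_{\cF(X),\cF(Y)}=\cF(\pi_{X,Y})\circ F_{X,Y}$; naturality of $F^A$ is inherited from that of $F$ and of $\pi$. For the unit we may take $\varphi^A=\Id_{\cF(A)}$, since $\vac_{\rep^0\cF(A)}=(\cF(A),\mu_{\cF(A)})$ is literally $\cF$ applied to $\vac_{\rep^0 A}=(A,\mu_A)$.

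Next I would check the hexagon and triangle coherence conditions. The associativity isomorphism $\cA^A$ and unit isomorphisms $l^A$, $r^A$ of $\rep^0 A$ are the unique morphisms making appropriate squares with the canonical surjections $\pi$ commute (and are in this way induced from $\cA^\cC$, $l^\cC$, $r^\cC$ and the algebra/module structure maps of $A$). Since each $\pi$ is an epimorphism, and tensoring it with an object of $\cD$ again gives an epimorphism by right exactness of $\tens_\cD$, every coherence diagram for the data $(\cF,F^A,\varphi^A)$ may be verified after precomposition with a suitable iterated $\pi$; at that point the diagram collapses, using the characterization of $F^A$ above, to a coherence diagram already known for the tensor functor $(\cF,F,\varphi)$ on $\cC$. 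This step is a routine but lengthy diagram chase, so only the more delicate points would be spelled out.

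Finally, the braiding. By definition the braiding $\cR^A_{X,Y}$ on $\rep^0 A$ is the unique morphism satisfying $\cR^A_{X,Y}\circ\pi_{X,Y}=\pi_{Y,X}\circ\cR^\cC_{X,Y}$; the content of this definition is that $\pi_{Y,X}\circ\cR^\cC_{X,Y}$ coequalizes the two maps presenting $X\tens_A Y$, which is exactly where the locality conditions $\mu_X\circ\cM_{A,X}=\mu_X$ and $\mu_Y\circ\cM_{A,Y}=\mu_Y$ (i.e.\ $X,Y\in\rep^0 A$) are used. If $\cF$ is braided, then $\cF(\cR^\cC_{X,Y})\circ F_{X,Y}=F_{Y,X}\circ\cR^\cD_{\cF(X),\cF(Y)}$, and combining this with the characterizations of $F^A$ and $\cR^A$ and cancelling the epimorphism $\pi_{\cF(X),\cF(Y)}$ yields $\cF(\cR^A_{X,Y})\circ F^A_{X,Y}=F^A_{Y,X}\circ\cR^{\cF(A)}_{\cF(X),\cF(Y)}$, so $(\cF,F^A,\varphi^A)$ is a braided tensor functor. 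If instead $\cF$ is braid-reversing, the identical computation with $\cR^\cD$ replaced by the inverse braiding shows $\cF$ is braid-reversing on $\rep^0 A$. The main obstacle here is organizational rather than conceptual: one must carefully set up the coequalizer presentations of $\tens_A$, $\cA^A$, $l^A$, $r^A$, and $\cR^A$ and track all module structures through $\cF$, but thanks to the epimorphy of the maps $\pi$ every verification reduces to a coherence identity already in hand for $\cF$ on $\cC$.
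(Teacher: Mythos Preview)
Your approach is essentially the same as the paper's: both construct $F^A$ via the cokernel/coequalizer presentation of $\tens_A$, reduce all coherence checks to known identities for $(\cF,F,\varphi)$ on $\cC$ using surjectivity of the projections $\pi$, and then verify compatibility with the braiding.

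There is one place where you are too quick. In the braid-reversing case, your claim that ``transporting the whole diagram through $F$ identifies it with the coequalizer defining $\cF(X)\tens_{\cF(A)}\cF(Y)$'' is not automatic: the parallel arrow $\mu^{(2)}$ involves $\cR_{A,X}$, and a braid-reversing $\cF$ sends this to an expression involving $\cR^{-1}_{\cF(X),\cF(A)}$ rather than $\cR_{\cF(A),\cF(X)}$. The transported pair therefore differs from the pair defining $\cF(X)\tens_{\cF(A)}\cF(Y)$, and one needs precisely the locality condition $\mu_X\circ\cM_{A,X}=\mu_X$ to show the two coequalizers agree. The paper makes this explicit in its computation of $\cF(\eta_{X_1,X_2})\circ F_{X_1,X_2}\circ\mu^{(2)}$, inserting $\cR_{A,X_1}\circ\cM_{A,X_1}^{-1}$ and then using $\mu_{X_1}\circ\cM_{A,X_1}^{-1}=\mu_{X_1}$. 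The same issue recurs in the right-unit compatibility $\cF(r^A_X)\circ F^A_{X,A}=r^{\cF(A)}_{\cF(X)}$, where $r^A_X$ is built from $\cR^{-1}_{A,X}$ but $r^{\cF(A)}_{\cF(X)}$ from $\cR^{-1}_{\cF(A),\cF(X)}$, and locality of $X$ is again what bridges the two. So locality is not used only in the braiding step, as you suggest, but already in constructing $F^A$ and in the unit axiom whenever $\cF$ is braid-reversing; you should flag this.
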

\begin{proof}
Letting $\tens_A$ and $\tens_{\cF(A)}$ denote the tensor products in $\rep^0 A$ and $\rep^0\cF(A)$, respectively, we need to obtain a natural isomorphism
\begin{equation*}
F^A: \tens_{\cF(A)}\circ(\cF\times\cF)\longrightarrow\cF\circ\tens_A.
\end{equation*}
from the original natural isomorphism $F$ of \eqref{eqn:F}. To do so, we recall (from \cite[Section 2.3]{CKM1} for example) that for objects $X_1$ and $X_2$ in $\rep^0 A$, $X_1\tens_A X_2$ is the cokernel of $\mu^{(1)}-\mu^{(2)}$, where $\mu^{(1)}=\mu_{X_1}\tens_\cC\Id_{X_2}$ and $\mu^{(2)}$ is the composition
\begin{align*}
(A\tens_\cC X_1)\tens_{\cC} X_2 & \xrightarrow{\cR_{A,X_1}\tens_\cC\Id_{X_2}} (X_1\tens_\cC A)\tens_{\cC} X_2\nonumber\\
& \xrightarrow{\cA_{X_1,A,X_2}^{-1}} X_1\tens_\cC (A\tens_\cC X_2)\xrightarrow{\Id_{X_1}\tens_\cC\mu_{X_2}} X_1\tens_\cC X_2.
\end{align*}
We use $\eta_{X_1,X_2}: X_1\tens_\cC X_2\rightarrow X_1\tens_A X_2$ to denote the cokernel morphism. Similar definitions and notation apply to objects in $\rep^0\cF(A)$. We would like to define morphisms $F^A_{X_1,X_2}$ and $G^A_{X_1,X_2}$ in $\cD$ such that the diagrams
\begin{equation*}
\xymatrixcolsep{4pc}
\xymatrix{
\cF(X_1)\tens_\cD\cF(X_2) \ar[d]^{\eta_{\cF(X_1),\cF(X_2)}} \ar[r]^(.525){F_{X_1,X_2}} & \cF(X_1\tens_\cC X_2) \ar[d]^{\cF(\eta_{X_1,X_2})} \\
\cF(X_1)\tens_{\cF(A)} \cF(X_2) \ar[r]^(.55){F^A_{X_1,X_2}} & \cF(X_1\tens_A X_2) \\
}
\end{equation*}
and
\begin{equation*}
\xymatrixcolsep{4pc}
\xymatrix{
 \cF(X_1\tens_\cC X_2) \ar[d]^{\cF(\eta_{X_1,X_2})} \ar[r]^(.475){F^{-1}_{X_1,X_2}} &  \cF(X_1)\tens_\cD\cF(X_2)  \ar[d]^{\eta_{\cF(X_1),\cF(X_2)}} \\
 \cF(X_1\tens_A X_2) \ar[r]^(.45){G^A_{X_1,X_2}} & \cF(X_1)\tens_{\cF(A)} \cF(X_2)  \\
}
\end{equation*}
commute.

\allowdisplaybreaks

From the cokernel definitions of $\cF(X_1)\tens_A\cF(X_2)$ and $X_1\tens_A X_2$ and by the right exactness of $\cF$ (so that $\cF(X_1\tens_A X_2)$ is still a cokernel), the existence and uniqueness of $F^A_{X_1,X_2}$ and $G^A_{X_1,X_2}$ is equivalent to the identities
\begin{align*}
\cF(  \eta_{X_1,X_2})\circ F_{X_1,X_2}\circ\mu^{(1)} & =\cF(\eta_{X_1,X_2})\circ F_{X_1,X_2}\circ\mu^{(2)},\\
\eta_{\cF(X_1),\cF(X_2)}\circ F^{-1}_{X_1,X_2}\circ\cF(\mu^{(1)}) & =\eta_{\cF(X_1),\cF(X_2)}\circ F^{-1}_{X_1,X_2}\circ\cF(\mu^{(2)}).
\end{align*}
When $\cF$ is braid-reversing (the more interesting case), the first identity is proved as follows:
\begin{align*}
\cF(& \eta_{X_1,X_2}) \circ F_{X_1,X_2}\circ(\Id_{\cF(X_1)}\tens_\cD\mu_{\cF(X_2)})\circ\cA_{\cF(X_1),\cF(A),\cF(X_2)}^{-1}\circ(\cR_{\cF(A),\cF(X_1)}\tens_\cD\Id_{\cF(X_2)})\nonumber\\
& = \cF(\eta_{X_1,X_2})  \circ F_{X_1,X_2}\circ(\Id_{\cF(X_1)}\tens_\cD\cF(\mu_{X_2}))\circ(\Id_{\cF(X_1)}\tens_\cD F_{A,X_2})\circ\nonumber\\
&\hspace{6em}\circ\cA_{\cF(X_1),\cF(A),\cF(X_2)}^{-1}\circ(\cR_{\cF(A),\cF(X_1)}\tens_\cD\Id_{\cF(X_2)})\nonumber\\
& =\cF(\eta_{X_1,X_2}\circ(\Id_{X_1}\tens_\cC\mu_{X_2}))\circ F_{X_1,A\tens_\cC X_2}\circ(\Id_{\cF(X_1)}\tens_\cD F_{A,X_2})\circ\nonumber\\
&\hspace{6em}\circ\cA_{\cF(X_1),\cF(A),\cF(X_2)}^{-1}\circ(\cR_{\cF(A),\cF(X_1)}\tens_\cD\Id_{\cF(X_2)})\nonumber\\
& =\cF(\eta_{X_1,X_2}\circ(\Id_{X_1}\tens_\cC\mu_{X_2})\circ\cA_{X_1,A,X_2}^{-1}\circ(\cR^{-1}_{X_1,A}\tens_\cC\Id_{X_2}))\circ\nonumber\\
&\hspace{6em}\circ F_{A\tens_\cC X_1,X_2}\circ(F_{A,X_1}\tens_\cD\Id_{\cF(X_2)})\nonumber\\
& =\cF(\eta_{X_1,X_2}\circ(\Id_{X_1}\tens_\cC\mu_{X_2})\circ\cA_{X_1,A,X_2}^{-1}\circ(\cR_{A,X_1}\tens_\cC\Id_{X_2}))\circ\nonumber\\
&\hspace{6em}\circ\cF(\cM_{A,X_1}^{-1}\tens_\cC\Id_{X_2})\circ F_{A\tens_\cC X_1,X_2}\circ(F_{A,X_1}\tens_\cD\Id_{\cF(X_2)})\nonumber\\
& =\cF(\eta_{X_1,X_2}\circ(\mu_{X_1}\tens_\cC\Id_{X_2})\circ(\cM_{A,X_1}^{-1}\tens\Id_{X_2}))\circ F_{A\tens_\cC X_1,X_2}\circ(F_{A,X_1}\tens_\cD\Id_{\cF(X_2)})\nonumber\\
& =\cF(\eta_{X_1,X_2})\circ F_{X_1,X_2}\circ(\cF(\mu_{X_1})\tens_\cC \Id_{X_2})\circ(F_{A,X_1}\tens_\cD\Id_{\cF(X_2)})\nonumber\\
& =\cF(\eta_{X_1,X_2})\circ F_{X_1,X_2}\circ\mu^{(1)},
\end{align*}
where the next to last equality uses the assumption that $X_1$ is an object of $\rep^0A$. The second identity is proved similarly, so we get the desired morphism $F^A_{X_1,X_2}$ in $\cD$ with inverse $G^A_{X_1,X_2}$.

To prove that $F^A_{X_1,X_2}$ is an isomorphism in $\rep^0\cF(A)$, we use the definitions of $\mu_{X_1\tens_A X_2}$ and $\mu_{\cF(X_1)\tens_{\cF(A)}\cF(X_2)}$ from \cite[Section 2.3]{CKM1}, along with compatibility of $F$ with the associativity isomorphisms, to calculate
\begin{align*}
&\mu_{\cF(X_1\tens_A X_2)}  \circ(\Id_{\cF(A)}\tens_\cD F^A_{X_1,X_2})\circ(\Id_{\cF(A)}\tens_\cD\eta_{\cF(X_1),\cF(X_2)})\nonumber\\
& \quad=\cF(\mu_{X_1\tens_A X_2})\circ F_{A,X_1\tens_A X_2}\circ(\Id_{\cF(A)}\tens_\cD\cF(\eta_{X_1,X_2}))\circ(\Id_{\cF(A)}\tens_\cD F_{X_1,X_2})\nonumber\\
& \quad	=\cF(\mu_{X_1\tens_A X_2}\circ(\Id_A\tens_\cC\eta_{X_1,X_2}))\circ F_{A,X_1\tens_\cC X_2}\circ(\Id_{\cF(A)}\tens_\cD F_{X_1,X_2})\nonumber\\
& \quad=\cF(\eta_{X_1,X_2})\circ\cF(\mu_{X_1}\tens_\cC\Id_{X_2})\circ\cF(\cA_{A,X_1,X_2})\circ F_{A,X_1\tens_\cC X_2}\circ(\Id_{\cF(A)}\tens_\cD F_{X_1,X_2})\nonumber\\
& \quad=\cF(\eta_{X_1,X_2})\circ\cF(\mu_{X_1}\tens_\cC\Id_{X_2})\circ F_{A\tens_\cC X_1,X_2}\circ(F_{A,X_1}\tens_\cD\Id_{\cF(X_2)})\circ\cA_{\cF(A),\cF(X_1),\cF(X_2)}\nonumber\\
& \quad=\cF(\eta_{X_1,X_2})\circ F_{X_1,X_2}\circ(\cF(\mu_{X_1})\tens_\cD\Id_{\cF(X_2)})\circ(F_{A,X_1}\tens_\cD\Id_{\cF(X_2)})\circ\cA_{\cF(A),\cF(X_1),\cF(X_2)}\nonumber\\
& \quad= F^A_{X_1,X_2}\circ\eta_{\cF(X_1),\cF(X_2)}\circ(\mu_{\cF(X_1)}\tens_\cD\Id_{\cF(X_2)})\circ\cA_{\cF(A),\cF(X_1),\cF(X_2)}\nonumber\\
& \quad= F^A_{X_1,X_2}\circ\mu_{\cF(X_1)\tens_{\cF(A)}\cF(X_2)}\circ(\Id_{\cF(A)}\tens_\cD\eta_{\cF(X_1),\cF(X_2)}).
\end{align*}
Because $\Id_{\cF(A)}\tens_\cD\eta_{\cF(X_1),\cF(X_2)}$ is surjective, it follows that 
\begin{equation*}
\mu_{\cF(X_1\tens_A X_2)}  \circ(\Id_{\cF(A)}\tens_\cD F^A_{X_1,X_2}) =F^A_{X_1,X_2}\circ\mu_{\cF(X_1)\tens_{\cF(A)}\cF(X_2)}
\end{equation*}
as required. The proof that $F^A$ defines a natural transformation is also straightforward from the definitions in \cite[Section 2.3]{CKM1} and the fact that $F$ is a natural transformation.

To show that the natural isomorphism $F^A$ gives $\cF:\rep^0 A\rightarrow\rep^0\cF(A)$ the structure of a braided (or braid-reversing) tensor functor, we also need $F^A$ to be compatible with the unit, associativity and braiding isomorphisms. Since the compatibility proofs are straightforward, we only discuss compatibility with the right unit and braiding isomorphisms in the case that $\cF$ is braid-reversing. For the right unit isomorphisms, we need to show
\begin{equation*}
\cF(r^A_X)\circ F^A_{X,A}=r_{\cF(X)}^{\cF(A)}
\end{equation*}
for any object $X$ of $\rep^0 A$, where $r^A$ and $r^{\cF(A)}$ are the right unit isomorphisms in $\rep^0 A$ and $\rep^0\cF(A)$, respectively. Indeed, from the definitions in \cite[Section 2.3]{CKM1} and the assumption that $X$ is an object of $\rep^0A$, we get
\begin{align*}
\cF(r^A_X)\circ F^A_{X,A}\circ\eta_{\cF(X),\cF(A)} & = \cF(r^A_X)\circ\cF(\eta_{X,A})\circ F_{X,A}\nonumber\\
& =\cF(\mu_X)\circ\cF(\cR_{A,X}^{-1})\circ F_{X,A}\nonumber\\
& =\cF(\mu_X)\circ\cF(\cR_{X,A})\circ F_{X,A}\nonumber\\
& =\cF(\mu_X)\circ F_{A,X}\circ\cR_{\cF(A),\cF(X)}^{-1}\nonumber\\
& =\mu_{\cF(X)}\circ\cR_{\cF(A),\cF(X)}^{-1} = r^{\cF(A)}_{\cF(X)}\circ\eta_{\cF(X),\cF(A)}.
\end{align*}
So the desired equality follows from surjectivity of $\eta_{\cF(X),\cF(A)}$. Similarly, the braid-reversing properties of $F$ and the definitions in \cite[Section 2.6]{CKM2} show that 
\begin{equation*}
\cF(\cR^A_{X_1,X_2})\circ F^A_{X_1,X_2}= F^A_{X_2,X_1}\circ\cR^{-1}_{\cF(X_2),\cF(X_1)}
\end{equation*}
for objects $X_1$ and $X_2$ of $\rep^0 A$, so that $\cF$ defines a braid-reversed tensor functor from $\rep^0 A$ to $\rep^0\cF(A)$.
\end{proof}

Since equivalences between abelian categories are automatically exact, we get the following corollary of Proposition \ref{prop:F_equiv_on_Rep_A} and Theorem \ref{thm:F_braid_tens_on_Rep_A}:
\begin{cor}\label{cor:rep_0_A_rep_0_F(A)}
Let $\cF:\cC\rightarrow\cD$ be a braided, respectively braid-reversing, tensor equivalence. If $A$ is a commutative $\cC$-algebra, then $\cF$ induces a braided, respectively braid-reversing, tensor equivalence between $\rep^0 A$ and $\rep^0\cF(A)$.
\end{cor}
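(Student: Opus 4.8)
The plan is simply to combine Proposition \ref{prop:F_equiv_on_Rep_A} with Theorem \ref{thm:F_braid_tens_on_Rep_A}, after noting that the extra hypothesis of the latter is automatic. First I would record that an equivalence between abelian categories is exact: it preserves all finite limits and colimits, in particular kernels and cokernels, and hence is in particular right exact. Thus a braided (respectively braid-reversing) tensor equivalence $\cF:\cC\rightarrow\cD$ meets the hypotheses of Theorem \ref{thm:F_braid_tens_on_Rep_A}, which therefore equips the restriction $\cF:\rep^0 A\rightarrow\rep^0\cF(A)$ with the structure of a braided (respectively braid-reversing) tensor functor, via the natural isomorphism $F^A$ built there from the original tensor structure $F$ on $\cF$.

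Next I would invoke Proposition \ref{prop:F_equiv_on_Rep_A}, which asserts that this same restricted functor $\cF:\rep^0 A\rightarrow\rep^0\cF(A)$ is an equivalence of categories; its proof exhibits $\cF$ as essentially surjective, full, and faithful on $\rep^0 A$, together with the construction of a quasi-inverse. Since a braided (respectively braid-reversing) tensor functor whose underlying functor is an equivalence of categories is by definition a braided (respectively braid-reversing) tensor equivalence, putting the two statements together immediately yields the corollary.

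There is no real obstacle here beyond this bookkeeping: all the substantive verifications — that $F^A$ is well defined, that it is a morphism of $\cF(A)$-modules, that it is natural, and that it is compatible with the unit, associativity, and braiding constraints of $\rep^0 A$ and $\rep^0\cF(A)$ — are already carried out in the proof of Theorem \ref{thm:F_braid_tens_on_Rep_A}, and the quasi-inverse is already produced in the proof of Proposition \ref{prop:F_equiv_on_Rep_A}. The only point that genuinely needs to be pointed out, and the only place the equivalence hypothesis (as opposed to just being a tensor functor) is used to upgrade Theorem \ref{thm:F_braid_tens_on_Rep_A}, is the exactness of $\cF$.
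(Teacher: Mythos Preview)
Your proposal is correct and follows essentially the same approach as the paper: the paper's proof is simply the observation that equivalences between abelian categories are automatically exact, so the corollary follows directly from Proposition \ref{prop:F_equiv_on_Rep_A} and Theorem \ref{thm:F_braid_tens_on_Rep_A}. Your write-up is more detailed but the logic is identical.
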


\section{Uniqueness of \texorpdfstring{$L_1(\mathfrak{sl}_2)$}{L1(sl2)}}\label{app:L1_sl_2_unique}

In this appendix, we present some uniqueness results for the simple affine vertex operator algebra $L_1(\mathfrak{sl}_2)$ associated to $\mathfrak{sl}_2$ at level $1$, and also for its non-trivial irreducible module. These results are perhaps known to experts, but we provide proofs here for completeness. The automorphism group of $L_1(\mathfrak{sl}_2)$ is $PSL_2(\CC)$, with fixed-point subalgebra the Virasoro vertex operator algebra $V_1$ \cite{DG, Mi}. As a $PSL_2(\CC)\times V_1$-module, 
\begin{equation}\label{eqn:L1_sl2_decomp}
L_1(\mathfrak{sl}_2)\cong\bigoplus_{n=0}^\infty V(2n)\otimes\cL_{2n+1,1}.
\end{equation} 
We first establish the uniqueness of a simple vertex operator algebra with this decomposition as a $V_1$-module, while ignoring $PSL_2(\CC)$-actions:

\begin{thm}\label{thm:L1_sl_2_unique}
If $V$ is a simple vertex operator algebra extension of $V_1$ such that $V\cong\bigoplus_{n=0}^\infty V(2n)\otimes\cL_{2n+1,1}$ as $V_1$-modules (with $V(2n)$ considered just as a $(2n+1)$-dimensional vector space), then $V\cong L_1(\mathfrak{sl}_2)$ as vertex operator algebras.
\end{thm}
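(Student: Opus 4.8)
The plan is to reconstruct the affine vertex operator algebra structure directly from the weight-one subspace, so that no prior classification of extensions is needed. Write $V$ for the hypothesized extension. From $V\cong\bigoplus_{n\geq 0}V(2n)\otimes\cL_{2n+1,1}$ and the fact that at $c=1$ the module $\cL_{2n+1,1}$ has integral lowest conformal weight $n^2$, one sees that $V$ is $\NN$-graded with $V_0=\CC\vac$, with finite-dimensional weight spaces, with $\dim V_1=3$ (only $V(2)\otimes(\cL_{3,1})_{[1]}$ contributes at weight $1$) and with $\dim V_2=4$ (only $\CC\omega\subseteq(\cL_{1,1})_{[2]}$ and $V(2)\otimes(\cL_{3,1})_{[2]}$ contribute, $(\cL_{3,1})_{[2]}$ being one-dimensional). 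First I would observe that $L_1$ maps $(\cL_{3,1})_{[1]}$ into $(\cL_{3,1})_{[0]}=0$, so $L_1V_1=0$; hence by Li's criterion the simple vertex operator algebra $V$ carries a nondegenerate invariant bilinear form, nondegenerate on each homogeneous piece and in particular on $V_1$. Thus $[a,b]:=a_0b$ makes $V_1$ a $3$-dimensional Lie algebra and $a_1b=\langle a,b\rangle\vac$ a nondegenerate invariant symmetric form on it; such a Lie algebra is isomorphic to $\mathfrak{sl}_2$ or to the abelian $\CC^3$.

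To rule out the abelian case and pin down the level, I would argue as follows. If $V_1$ were abelian, the operator product of weight-one fields would be purely of Heisenberg type, so the subalgebra $\langle V_1\rangle$ generated by $V_1$ would be the rank-$3$ Heisenberg vertex algebra, whose weight-$2$ space has dimension $9>4=\dim V_2$ — impossible since $\langle V_1\rangle\subseteq V$. Hence $V_1\cong\mathfrak{sl}_2$, with invariant form $k$ times the standard one for some $k\in\CC^\times$. The modes $a_{(n)}$ ($a\in V_1$, $n\in\ZZ$) then generate $\widehat{\mathfrak{sl}_2}$ at level $k$ acting on $V$ with vacuum vector $\vac$, yielding a grading-preserving surjection of vertex algebras $\pi\colon V^k(\mathfrak{sl}_2)\twoheadrightarrow\langle V_1\rangle$. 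Since $\dim V^k(\mathfrak{sl}_2)_2=9$ (independently of $k$) while $\dim\langle V_1\rangle_2\leq\dim V_2=4$, the kernel $\ker\pi$ is a nonzero proper ideal with $\dim(\ker\pi)_2\geq 5$. A short case analysis using the low-degree singular vectors of $V^k(\mathfrak{sl}_2)$ — the level-one vector $(e_{-1})^2\vac$ of $\mathfrak{sl}_2$-weight $4$, which spans a $5$-dimensional ideal in weight $2$; the critical-level Sugawara vector $\Omega$ of $\mathfrak{sl}_2$-weight $0$, spanning only a $1$-dimensional ideal there; and the $k=0$ vector $e_{-1}\vac$, which already collapses weight $1$ — shows that $\dim(\ker\pi)_2=5$ forces $k=1$ and then $\ker\pi$ to be the full maximal ideal of $V^1(\mathfrak{sl}_2)$, so $\langle V_1\rangle\cong L_1(\mathfrak{sl}_2)$. (Alternatively, once $V_1\cong\mathfrak{sl}_2$ and $k\neq-2$, one can show $\omega$ equals the Sugawara vector of $\langle V_1\rangle$: their difference lies in $\CC\omega\oplus L_{-1}V_1$, and matching the action on $V_1$ together with $\mathrm{tr}_{\mathfrak{sl}_2}\mathrm{ad}(b)=0$ forces it to vanish, so $1=c=3k/(k+2)$; the critical level is still excluded by the dimension count.)

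Finally I would show $V$ is generated by $V_1$. The grading on $\langle V_1\rangle$ induced from $V$ coincides with the affine ($=$ Sugawara) grading of $L_1(\mathfrak{sl}_2)$ under the isomorphism above, and by the known decomposition $L_1(\mathfrak{sl}_2)\cong\bigoplus_{n\geq 0}V(2n)\otimes\cL_{2n+1,1}$ \cite{DG, Mi} this graded vertex algebra has the same graded dimension as $V$. Since $\langle V_1\rangle\subseteq V$ is a graded subspace with $\dim\langle V_1\rangle_n=\dim V_n$ for all $n$, we conclude $\langle V_1\rangle=V$, whence $V\cong L_1(\mathfrak{sl}_2)$ as vertex operator algebras, with the conformal vector of $V$ corresponding to the Sugawara conformal vector.

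I expect the main obstacle to be the middle step: the passage from $\dim V_1=3$ to $V_1\cong\mathfrak{sl}_2$ relies essentially on nondegeneracy of the invariant form (hence on Li's criterion and $L_1V_1=0$), and the determination of the level then requires a clean case analysis showing that $k=1$ is the only level at which the defining ideal of $V^k(\mathfrak{sl}_2)$ can have a $5$-dimensional weight-$2$ component, including the exclusion of the critical level $k=-2$.
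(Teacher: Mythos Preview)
Your approach is correct and genuinely different from the paper's. The paper works with a \emph{single} element $h\in V_{(1)}$ with $\langle h,h\rangle=1$, shows by a direct computation that $\omega=\tfrac{1}{2}h_{-1}^2\vac$, and then invokes the Li--Xu characterization of rank-one lattice vertex operator algebras \cite{LX} to conclude $V\cong V_L$ for some lattice $L$, identified with the $\mathfrak{sl}_2$-root lattice by character comparison. You instead use the full three-dimensional space $V_1$, identify it as $\mathfrak{sl}_2$ via a dimension count on $V_2$, pin down the level, and then match graded dimensions. Your argument is more self-contained (it avoids the external result from \cite{LX}) at the cost of the level computation; the paper's route is shorter but relies on the lattice characterization.

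One point deserves tightening. Your ``case analysis'' of low-degree singular vectors of $V^k(\mathfrak{sl}_2)$ does not, as written, establish exhaustiveness: an ideal generated by a singular vector of conformal weight $\geq 3$ can still have nonzero weight-$2$ component, so you have not ruled out other levels. The clean fix bypasses any case analysis. Since $V_2\cong V(0)\oplus V(2)$ as an $\mathfrak{sl}_2$-module (no $V(4)$ summand) and $\pi$ is $\mathfrak{sl}_2$-equivariant, one has $\pi(e_{-1}^2\vac)=0$ at every level. But in $V^k(\mathfrak{sl}_2)$ one computes $f_1(e_{-1}^2\vac)=2(k-1)\,e_{-1}\vac$, so $0=\pi(f_1 e_{-1}^2\vac)=2(k-1)\,\pi(e_{-1}\vac)=2(k-1)e$, forcing $k=1$. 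This single computation handles the critical level as well, so your parenthetical Sugawara argument becomes unnecessary (though it is also correct once $k=-2$ is excluded in this way). After that, your conclusion that $\langle V_1\rangle\cong L_1(\mathfrak{sl}_2)$ and the graded-dimension comparison giving $\langle V_1\rangle=V$ are fine; the equality $\omega=\omega_{\mathrm{Sug}}$ follows since both lie in the one-dimensional $\mathfrak{sl}_2$-invariant part of $V_2$ and induce the same $L_0$.
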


\begin{proof}
For $n\in\NN$, we use $v_{2n+1,1}$ to denote a generating vector of $\cL_{2n+1,1}$ of minimal conformal weight, and we use $\pi_{2n+1}: V\rightarrow V(2n)\otimes\cL_{2n+1,1}$ to denote the $\cV ir$-module projection.

By assumption, the conformal-weight-$1$ space $V_{(1)}=V(2)\otimes v_{3,1}$ is three-dimensional, and as in \cite[Remark 8.9.1]{FLM} it is a Lie algebra with bracket $[u,v]=u_0 v$ and invariant symmetric bilinear form $\langle\cdot,\cdot\rangle$ defined by $u_1 v=\langle u,v\rangle\vac$ for $u,v\in V_{(1)}$. We claim that $\langle\cdot,\cdot\rangle$ is non-degenerate. Indeed, consider $a\otimes v_{3,1}, b\otimes v_{3,1}\in V_{(1)}$ for non-zero $a,b\in V(2)$. Then we have an identification of $V_1$-module intertwining operators
\begin{equation*}
\pi_1\circ Y_V\vert_{(a\otimes\cL_{3,1})\otimes(b\otimes\cL_{3,1})} =\langle a\otimes v_{3,1},b\otimes v_{3,1}\rangle\cY_{33}^1
\end{equation*}
where $\cY_{33}^1$ is the unique $V_1$-module intertwining operator of type $\binom{\cL_{1,1}}{\cL_{3,1}\,\cL_{3,1}}$ such that
\begin{equation*}
\cY_{33}^1(v_{3,1},x)v_{3,1}\in x^{-2}(\vac+x\cL_{1,1}[[x]]).
\end{equation*}
Since there is a non-zero intertwining operator of type $\binom{\cL_{1,1}}{\cL_{r,1}\,\cL_{3,1}}$ only for $r=3$, it follows that if $\langle a\otimes v_{3,1}, b\otimes v_{3,1}\rangle=0$ for all $a\in V(2)$, then the $V$-submodule generated by $b\otimes v_{3,1}$ is contained in $\bigoplus_{n= 1}^\infty V(2n)\otimes\cL_{2n+1,1}$. Since $V$ is simple, this is impossible for $b\neq 0$, and thus for such $b$ there exists $a\in V(2)$ such that $\langle a\otimes v_{3,1}, b\otimes v_{3,1}\rangle\neq 0$. This proves that $\langle\cdot,\cdot\rangle$ is non-degenerate.

Since $\langle\cdot,\cdot\rangle$ is non-degenerate, we can take $h\in V_{(1)}$ such that $\langle h,h\rangle =1$. We would like to show that $\omega=L_{-2}\vac$ is a multiple of $h_{-1}^2\vac$. Because $h_{2n+1,1}=n^2>2$ if $n>1$, we have
\begin{equation*}
h_{-1}^2\vac=c\cdot\omega+L_{-1}u
\end{equation*}
for some $c\in\CC$ and $u\in V_{(1)}$. We first claim that $u=0$. To show this, observe that vertex operator modes acting on $V$ satisfy
\begin{align*}
2\sum_{m=0}^\infty h_{-m}h_{m+1} & =\mathrm{Res}_x\,x^2 Y_V(h_{-1}^2\vac,x)\nonumber\\
& =c\cdot\mathrm{Res}_x\,x^2 Y_V(\omega,x)+\mathrm{Res}_x\,x^2\frac{d}{dx}Y_V(u,x) =c\cdot L_1-2 u_1.
\end{align*}
The left side of this equation annihilates $V_{(1)}$ because
\begin{equation*}
h_0h_1 v =\langle h,v\rangle h_0\vac =0
\end{equation*}
for all $v\in V_{(1)}$, and $L_1$ annihilates $V_{(1)}$ as well since $V_{(1)}$ consists of Virasoro primary vectors. Thus $u_1$ also annihilates $V_{(1)}$, that is, $\langle u,v\rangle=0$ for all $v\in V_{(1)}$. Since $\langle\cdot,\cdot\rangle$ is non-degenerate, this proves the claim.

We now have $h_{-1}^2\vac=c\cdot\omega$, and it remains to calculate $c$. Let $(\cdot,\cdot)$ be the unique non-degenerate invariant bilinear form on $\cL_{1,1}\subseteq V$ such that $(\vac,\vac)=1$. Thus
\begin{equation*}
(L_{-2}\vac,L_{-2}\vac)=(\vac,L_2 L_{-2}\vac)=\frac{1}{2},
\end{equation*}
so that using \eqref{eqn:Vir_comm_form} and the $L_{-1}$-derivative property,
\begin{align*}
\frac{c}{2} & = (L_{-2}\vac,c\cdot L_{-2}\vac) =(L_{-2}\vac,h_{-1}^2\vac)\nonumber\\
& =\mathrm{Res}_x\,x^{-1}(L_{-2}\vac,(\pi_1\circ Y_V)(h,x)h) =\mathrm{Res}_x\,x^{-1}(\vac,L_2(\pi_1\circ Y_V)(h,x)h\rangle\nonumber\\
& =\mathrm{Res}\,x\left(\vac, x\frac{d}{dx} (\pi_1\circ Y_V)(h,x)h+3(\pi_1\circ Y_V)(L_0 h,x)h\right)\nonumber\\
& =(\vac, -2h_1 h+3h_1 h) =\langle h,h\rangle(\vac,\vac)=1.
\end{align*}
Thus $c=2$ and we get $\omega =\frac{1}{2}h_{-1}^2\vac$.

We will now use \cite[Corollary 3.15]{LX} to show that $V$ is isomorphic to the lattice vertex operator algebra $V_L$ where $L$ consists of all $\alpha\in\CC$ such that for some non-zero $v\in V$, $h_0 v=\alpha v$. To apply this result, we need to verify Conditions (1) -- (3) of \cite{LX}: Condition (1) is satisfied because
\begin{equation*}
L_n h=\delta_{n,0} h,\qquad h_n h=\delta_{n,1}\vac
\end{equation*}
for $n\geq 0$. Condition (2) is that the algebra generated by the operators $h_n$, $n\geq 0$, acts locally finitely on $V$. This is clear because $V$ is a vertex operator algebra. Condition (3) is that $G_0 =\CC\vac$, where $G_0$ is the intersection of the generalized $h_0$-eigenspace with generalized eigenvalue $0$ with the Heisenberg vacuum space
\begin{equation*}
G=\lbrace v\in V\,\vert\,h_nv =0\,\,\text{for}\,\,n>0\rbrace.
\end{equation*}
Indeed, if $v\in G$, then $L_0 v=\frac{1}{2} h_0^2 v$, so if $v$ is additionally a generalized eigenvector for $h$ with generalized eigenvalue $0$, then $L_0^N v=0$ for some $N\in\ZZ_+$. Thus $v$ has conformal weight $0$ and it follows that $G_0=\CC\vac$.

It now follows from \cite[Corollary 3.15]{LX} that $V$ is isomorphic to a rank-one lattice vertex operator algebra $V_L$. Since $V$ has the same character as the $\mathfrak{sl}_2$-root lattice vertex operator algebra, which in turn is isomorphic to $L_1(\mathfrak{sl}_2)$, it follows that $V$ is isomorphic to $L_1(\mathfrak{sl}_2)$.
\end{proof}

We now consider what happens when a simple vertex operator algebra with decomposition \eqref{eqn:L1_sl2_decomp} does have a $PSL_2(\CC)$-action:
\begin{thm}\label{thm:2nd:L1_sl_2_unique}
Suppose $V$ is a simple vertex operator algebra extension of $V_1$ with an action of $PSL_2(\CC)$ by vertex operator algebra automorphisms such that $V\cong\bigoplus_{n=0}^\infty V(2n)\otimes\cL_{2n+1,1}$ as a $PSL_2(\CC)\times V_1$-module. Then the vertex operator algebra isomorphism $L_1(\mathfrak{sl}_2)\rightarrow V$ guaranteed by Theorem \ref{thm:L1_sl_2_unique} can be chosen to be an isomorphism of $PSL_2(\CC)$-modules.
\end{thm}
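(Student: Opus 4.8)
The plan is to start from the vertex operator algebra isomorphism $\Phi\colon L_1(\mathfrak{sl}_2)\to V$ provided by Theorem \ref{thm:L1_sl_2_unique} and to correct it by a vertex operator algebra automorphism of $L_1(\mathfrak{sl}_2)$ so that it intertwines the two $PSL_2(\CC)$-actions. The key external input is that $\mathrm{Aut}(L_1(\mathfrak{sl}_2))=PSL_2(\CC)$ (by \cite{DG,Mi}) together with the fact that every algebraic group automorphism of $PSL_2(\CC)$ is inner, since $PSL_2(\CC)$ is the adjoint group of type $A_1$ and the $A_1$ Dynkin diagram has no symmetries.

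Write $G=PSL_2(\CC)$, let $\sigma\colon G\to\mathrm{Aut}(L_1(\mathfrak{sl}_2))$ be the standard action (with fixed-point subalgebra $V_1$), which by \cite{DG,Mi} is an isomorphism, and let $\rho\colon G\to\mathrm{Aut}(V)$ be the given action. Conjugating by $\Phi$ produces a group homomorphism $\chi:=\Phi^{-1}\circ\rho(-)\circ\Phi\colon G\to\mathrm{Aut}(L_1(\mathfrak{sl}_2))$, and hence an endomorphism $\sigma^{-1}\circ\chi$ of $G$. I would then check that $\sigma^{-1}\circ\chi$ is an algebraic automorphism of $G$: it is injective because $\rho$ is injective, which in turn holds since the conformal-weight-$1$ space of $V$ is the summand $V(2)$ on which $G$ acts by the (faithful) adjoint representation; it is algebraic because the $G$-actions on $L_1(\mathfrak{sl}_2)$ are rational on every finite-dimensional conformal weight space and $\mathrm{Aut}(L_1(\mathfrak{sl}_2))$ embeds into $GL\big((L_1(\mathfrak{sl}_2))_{(1)}\big)$ (as $L_1(\mathfrak{sl}_2)$ is strongly generated in conformal weight $1$); and an injective algebraic endomorphism of the connected group $PSL_2(\CC)$ is surjective by a dimension count, hence an automorphism. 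Being inner, there is $h\in G$ with $\sigma^{-1}(\chi(g))=hgh^{-1}$ for all $g\in G$, equivalently $\Phi^{-1}\rho(g)\Phi=\sigma(h)\,\sigma(g)\,\sigma(h)^{-1}$.

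Finally I would set $\Phi':=\Phi\circ\sigma(h)$. Since $\sigma(h)$ is a vertex operator algebra automorphism of $L_1(\mathfrak{sl}_2)$ and $\Phi$ is a vertex operator algebra isomorphism, $\Phi'$ is again a vertex operator algebra isomorphism $L_1(\mathfrak{sl}_2)\to V$, and the displayed identity gives, for every $g\in G$,
\[
\Phi'\circ\sigma(g)=\Phi\,\sigma(h)\,\sigma(g)=\rho(g)\,\Phi\,\sigma(h)=\rho(g)\circ\Phi',
\]
so $\Phi'$ is the desired $PSL_2(\CC)$-equivariant isomorphism.

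The main obstacle is the bookkeeping that ensures $\sigma^{-1}\circ\chi$ is a genuinely \emph{algebraic} automorphism of $PSL_2(\CC)$, not merely an abstract group automorphism (the latter class is much larger, e.g.\ it contains automorphisms built from wild field automorphisms of $\CC$); this forces one to use that the $G$-actions are rational on finite-dimensional conformal weight spaces and that $L_1(\mathfrak{sl}_2)$ is generated in weight one. An alternative that sidesteps this is to run the whole argument inside the weight-$1$ Lie algebra $V_{(1)}\cong\mathfrak{sl}_2$: both $\rho$ and the $\Phi$-transported $\sigma$ act there by Lie algebra automorphisms realizing the adjoint representation, and conjugacy of $\mathfrak{sl}_2$-triples supplies the correcting element $h$, which then extends uniquely to a vertex operator algebra automorphism since $L_1(\mathfrak{sl}_2)$ is generated in weight one.
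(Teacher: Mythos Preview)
Your argument is correct and genuinely different from the paper's. You fix some vertex operator algebra isomorphism $\Phi$ and then correct it by an element of $\mathrm{Aut}(L_1(\mathfrak{sl}_2))=PSL_2(\CC)$, using that $\mathrm{Out}(PSL_2(\CC))$ is trivial in the algebraic category; the technical point you flag (algebraicity of $\sigma^{-1}\circ\chi$) is handled exactly as you say, since both actions are rational on the finite-dimensional weight-one space and $\mathrm{Aut}(L_1(\mathfrak{sl}_2))$ injects into $GL((L_1(\mathfrak{sl}_2))_{(1)})$. The paper instead builds the equivariant isomorphism directly: it fixes a standard $\{e,f,h\}$-basis of $V_{(1)}$ as a $PSL_2(\CC)$-module, uses $PSL_2(\CC)$-equivariance of $(u,v)\mapsto u_0 v$ to show $V_{(1)}$ carries the $\mathfrak{sl}_2$ bracket up to a nonzero scalar, rescales to make the scalar $1$, checks the level is $1$, and then obtains a surjection $V^1(\mathfrak{sl}_2)\to V$ that descends to a $PSL_2(\CC)$-equivariant isomorphism $L_1(\mathfrak{sl}_2)\to V$ (equivariance propagates from weight one because both sides are generated there). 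Your approach is cleaner conceptually and immediately generalizes to any strongly generated affine VOA whose automorphism group equals the adjoint group of a type with trivial Dynkin symmetry; the paper's approach is more self-contained, needing no external facts about automorphism or outer automorphism groups. Your ``alternative'' at the end---correcting on $V_{(1)}$ via conjugacy of $\mathfrak{sl}_2$-triples and then extending---is essentially the paper's strategy recast in your framework.
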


\begin{proof}
 By assumption, the conformal-weight-$1$ space $V_{(1)}=V(2)\otimes v_{3,1}$ is a $PSL_2(\CC)$-module isomorphic to $\mathfrak{sl}_2$, so we may fix a standard basis $\lbrace e, f, h\rbrace$ for $V_{(1)}$ as a $PSL_2(\CC)$-module. Moreover, because $PSL_2(\CC)$ acts on $V$ by vertex operator algebra automorphisms, the map $V_{(1)}\otimes V_{(1)} \rightarrow V_{(1)}$ given by $u\otimes v\mapsto u_0 v$ is a $PSL_2(\CC)$-module homomorphism, and thus it must be a multiple of the Lie bracket on $\mathfrak{sl}_2$, that is,
 \begin{equation}\label{eqn:scalar_mult_of_sl_2}
e_0 f= c\cdot h,\qquad f_0 h=2c\cdot f,\qquad h_0 e=2c\cdot e
\end{equation}
for some $c\in\CC$. Since $V_{(1)}$ is also a Lie algebra with bracket $[u,v]=u_0 v$, and since this Lie algebra is isomorphic to $\mathfrak{sl}_2$ by Theorem \ref{thm:L1_sl_2_unique}, we have $c\neq 0$ (but note that we cannot immediately identify the $\lbrace e, f, h\rbrace$ basis for $V_{(1)}$ considered as a $PSL_2(\CC)$-module with the corresponding basis for $V_{(1)}$ as a Lie algebra).

We can now replace the vertex operator $Y_{V}$ with $\varphi^{-1}\circ Y_{V}\circ(\varphi\otimes\varphi)$, where $\varphi$ is the linear isomorphism that acts as 
the identity on $V(2n)\otimes\cL_{2n+1,1}$ for $n\neq 1$ and by the scalar $c$ on $V(2)\otimes\cL_{3,1}$. Note that
\begin{equation*}
\varphi: (V,\varphi^{-1}\circ Y_{V}\circ(\varphi\otimes\varphi),\vac,\omega) \longrightarrow (V, Y_{V}, \vac,\omega)
\end{equation*}
is a vertex operator algebra isomorphism that is also a $PSL_2(\CC)$-module isomorphism. Thus we may assume that the simple vertex operator algebra structure on $V$ satisfies \eqref{eqn:scalar_mult_of_sl_2} with $c=1$. As in the proof of Theorem \ref{thm:L1_sl_2_unique},
 $(u,v)\rightarrow u_1 v$ defines a non-degenerate invariant bilinear form on $V_{(1)}\cong\mathfrak{sl}_2$:
\begin{equation*}
u_1 v=k\langle u,v\rangle\vac
\end{equation*}
for some non-zero $k\in\CC$, where $\langle\cdot,\cdot\rangle$ is the invariant bilinear form on $\mathfrak{sl}_2$ such that $\langle h,h\rangle =2$. Thus $h_1 h=2k\vac$, so by the proof of Theorem \ref{thm:L1_sl_2_unique}, $\omega=\frac{1}{4k} h_{-1}^2\vac$, and then
\begin{equation*}
L_0 =\frac{1}{4k}h_0^2 +\frac{1}{2k}\sum_{n=1}^\infty h_{-n} h_n.
\end{equation*}
Using $h_0 e=2e$, $h_1 e=k\langle h,e\rangle\vac=0$, and $h_n e=0$ for $n\geq 2$, we get $e=L_0 e=\frac{1}{k}e$, and thus $k=1$.

We can now conclude that there is a vertex algebra homomorphism from the universal affine vertex algebra $V^1(\mathfrak{sl}_2)$ at level $1$ to $V$ which sends the generators $e(-1)\vac$, $f(-1)\vac$, and $h(-1)\vac$ of $V^1(\mathfrak{sl}_2)$ to $e$, $f$, and $h$, respectively. Since $V$ is simple and generated by $e$, $f$, and $h$ (because $V$ is abstractly isomorphic to $L_1(\mathfrak{sl}_2)$), this homomorphism descends to a vertex operator algebra isomorphism $L_1(\mathfrak{sl}_2)\rightarrow V$. By construction, this isomorphism commutes with the $PSL_2(\CC)$-actions on the weight-one subspaces of $L_1(\mathfrak{sl}_2)$ and $V$. Then because both algebras are generated by their weight-one subspaces, and because $PSL_2(\CC)$ acts by automorphisms on both algebras, the isomorphism $L_1(\mathfrak{sl}_2)\rightarrow V$ is an isomorphism of $PSL_2(\CC)$-modules, as desired.
\end{proof}

The vertex operator algebra $L_1(\mathfrak{sl}_2)$, has a unique (up to isomorphism) irreducible module which is not isomorphic to $L_1(\mathfrak{sl}_2)$. We denote this module by $X$. The action of $\mathfrak{sl}_2$ on $X$ by zero-modes of the vertex operators for elements of $L_1(\mathfrak{sl}_2)_{(1)}\cong\mathfrak{sl}_2$ exponentiates to an action of $SL_2(\CC)$ on $X$ such that
\begin{equation}\label{eqn:sl_2_compat_1}
g\cdot Y_{X}(v,x)w = Y_{X}(g\cdot v,x)g\cdot w
\end{equation}
for $g\in SL_2(\CC)$, $v\in L_1(\mathfrak{sl}_2)$, and $w\in X$. In particular, the $SL_2(\CC)$-action and $V_1$-action on $X$ commute; we have
\begin{equation*}
X=\bigoplus_{n=1}^{\infty} V(2n-1)\otimes\cL_{2n,1}
\end{equation*}
as an $SL_2(\CC)\times V_1$-module. We now have a uniqueness result for the $SL_2(\CC)$-action on $X$:
\begin{thm}\label{thm:mod_for_L1_sl_2_unique}
Suppose $W$ is a simple $L_1(\mathfrak{sl}_2)$-module with an $SL_2(\CC)$-action such that
\begin{equation}\label{eqn:sl_2_compat_2}
g\cdot Y_W(v,x)w= Y_W(g\cdot v,x)g\cdot w
\end{equation}
for all $g\in SL_2(\CC)$, $v\in L_1(\mathfrak{sl}_2)$, and $w\in W$, and such that $W\cong\bigoplus_{n=1}^\infty V(2n-1)\otimes \cL_{2n,1}$ as an $SL_2(\CC)\times V_1$-module. Then there is an $L_1(\mathfrak{sl}_2)$-module isomorphism $X\rightarrow W$ which is also an $SL_2(\CC)$-module isomorphism.
\end{thm}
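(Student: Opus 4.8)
The plan is to reduce the statement to a fact about the derived $\mathfrak{sl}_2$-action. First I would record that as a $V_1$-module $W\cong\bigoplus_{n\geq 1}V(2n-1)\otimes\cL_{2n,1}$ has lowest conformal weight $h_{2,1}=\tfrac14$ at $c=1$, so $W\not\cong L_1(\mathfrak{sl}_2)$; since $L_1(\mathfrak{sl}_2)$ has exactly two irreducible modules up to isomorphism, $W\cong X$ as $L_1(\mathfrak{sl}_2)$-modules. Fix any such isomorphism $\psi\colon X\rightarrow W$. Extracting the coefficient of $x^{-1}$ in the intertwining identity $\psi(Y_X(a,x)w)=Y_W(a,x)\psi(w)$ for $a\in L_1(\mathfrak{sl}_2)_{(1)}\cong\mathfrak{sl}_2$ shows $\psi(a_0 w)=a_0\psi(w)$, i.e.\ $\psi$ already intertwines the $\mathfrak{sl}_2$-actions by zero modes on $X$ and on $W$. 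So it remains to show that the \emph{given} $SL_2(\CC)$-action on $W$ is exactly the exponential of this zero-mode action, and likewise for $X$ (the latter holds by construction).

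Next I would differentiate \eqref{eqn:sl_2_compat_2} at the identity of $SL_2(\CC)$. Writing $\rho(a)$ for the resulting action of $a\in\mathfrak{sl}_2=\mathrm{Lie}(SL_2(\CC))$ on $W$ and using that the induced action of $a$ on $L_1(\mathfrak{sl}_2)$ itself is $v\mapsto a_0v$, one gets
\begin{equation*}
\rho(a)\,Y_W(v,x)w-Y_W(v,x)\,\rho(a)w=Y_W(a_0v,x)w
\end{equation*}
for all $v\in L_1(\mathfrak{sl}_2)$, $w\in W$. On the other hand the commutator formula for the vertex operator $Y_W(a,x)$ gives $[a_0,Y_W(v,x)]=Y_W(a_0v,x)$. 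Subtracting, $\rho(a)-a_0$ commutes with all $Y_W(v,x)$, hence lies in $\Endo_{L_1(\mathfrak{sl}_2)}(W)=\CC\,\Id_W$ by Schur's lemma (applicable since $W$ is a simple grading-restricted module). Thus $\rho(a)-a_0=\lambda(a)\,\Id_W$ for a linear functional $\lambda\colon\mathfrak{sl}_2\rightarrow\CC$; comparing $\rho([a,b])$ with $[\rho(a),\rho(b)]$ and using $[a_0,b_0]=[a,b]_0$ forces $\lambda([a,b])=0$, and since $\mathfrak{sl}_2$ is perfect, $\lambda\equiv0$. Hence $\rho(a)=a_0$ for all $a\in\mathfrak{sl}_2$.

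Finally, the given $SL_2(\CC)$-action on $W$ preserves conformal weight spaces (apply \eqref{eqn:sl_2_compat_2} with $v=\omega$), and by hypothesis it is a direct sum of the irreducible rational representations $V(2n-1)$; so it is a rational representation of the connected group $SL_2(\CC)$ on each finite-dimensional conformal weight space, and is therefore uniquely determined by its derivative, which the previous paragraph identifies with the zero-mode $\mathfrak{sl}_2$-action. Consequently the $SL_2(\CC)$-action on $W$ is the exponential of the zero-mode action, as is the one on $X$. Since $\psi$ intertwines the zero-mode $\mathfrak{sl}_2$-actions, it intertwines their exponentials, so $\psi\colon X\rightarrow W$ is simultaneously an $L_1(\mathfrak{sl}_2)$-module and an $SL_2(\CC)$-module isomorphism. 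The only genuinely delicate point in the argument is the last step: passing from the derived $\mathfrak{sl}_2$-action to the group action requires knowing the $SL_2(\CC)$-representation is rational, which is precisely where the assumed decomposition of $W$ as an $SL_2(\CC)\times V_1$-module is used.
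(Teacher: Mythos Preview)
Your argument is correct and takes a genuinely different route from the paper. The paper restricts attention to the two-dimensional top level $T(W)=V(1)\otimes v_{2,1}$: compatibility forces the zero-mode map $\mathfrak{sl}_2\otimes T(W)\to T(W)$ to be an $SL_2(\CC)$-homomorphism, hence a scalar multiple $c$ of the standard action; the fact that $T(W)$ must be a genuine $\mathfrak{sl}_2$-module pins down $c\in\{0,1\}$, and $c=1$ follows from $W\cong X$. You instead work globally: differentiating \eqref{eqn:sl_2_compat_2} and comparing with the commutator formula shows $\rho(a)-a_0$ is a module endomorphism, hence a scalar $\lambda(a)$ by Schur, and then perfectness of $\mathfrak{sl}_2$ kills $\lambda$. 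Your approach is cleaner and generalizes immediately (nothing about it uses that the top level is two-dimensional or that the Lie algebra is $\mathfrak{sl}_2$ beyond $[\mathfrak{g},\mathfrak{g}]=\mathfrak{g}$), whereas the paper's argument is more hands-on and avoids invoking Schur on the full module. One small presentational point: you should note the rationality of the $SL_2(\CC)$-action (from the assumed decomposition) \emph{before} differentiating, since that is what makes the derivative $\rho$ well defined on each weight space; you use it only at the end, though you correctly flag this as the delicate step.
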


\begin{proof}
Let $T(W)$ denote the top level of $W$, that is, the lowest conformal weight space $V(1)\otimes v_{2,1}$. The assumed compatibility of the $SL_2(\CC)$- and $L_1(\mathfrak{sl}_2)$-actions on $W$ implies that the linear map $L_1(\mathfrak{sl}_2)_{(1)}\otimes T(W)\rightarrow T(W)$ give by $v\otimes w\mapsto v_0 w$ is an $SL_2(\CC)$-module homomorphism, and thus must be a multiple of the $\mathfrak{sl}_2$-action on $V(1)$. That is, if $\lbrace e, f, h\rbrace$ is the standard basis of $L_1(\mathfrak{sl}_2)_{(1)}\cong\mathfrak{sl}_2$, then there is a basis $\lbrace v_1, v_{-1}\rbrace$ of $T(W)$ such that
\begin{equation}\label{eqn:sl2_action_on_TW}
\begin{array}{lll}
e_0 v_1 =0, & h_0 v_1 =c\cdot v_1, & f_0 v_1=c\cdot v_{-1},\\
e_0 v_{-1} = c\cdot v_1, & h_0 v_{-1} =-c\cdot v_{-1}, & f_0 v_{-1} =0\\
\end{array}
\end{equation}
for some $c\in\CC$.

On the other hand, the axioms for vertex operator algebras and modules show that the action of $L_1(\mathfrak{sl}_2)_{(1)}$ on $T(W)$ by zero-modes gives $T(W)$ the structure of an $\mathfrak{sl}_2$-module. The formulas \eqref{eqn:sl2_action_on_TW} describe an $\mathfrak{sl}_2$-module structure only when $c=0$ or $c=1$, since if $c\neq 0$, then $v_1$ is the unique (up to scale) highest-weight vector in $T(W)$ and thus must have $h$-eigenvalue $1$ as $T(W)$ is two-dimensional. In fact, we may conclude $c\neq 0$ because the classification of simple $L_1(\mathfrak{sl}_2)$-modules shows that $W$ must be isomorphic to $X$ as an $L_1(\mathfrak{sl}_2)$-module, and we know that $L_1(\mathfrak{sl}_2)_{(1)}$ acts non-trivially on the top level of $X$. Thus \eqref{eqn:sl2_action_on_TW} holds with $c=1$, showing that the assumed $SL_2(\CC)$-action on $T(W)$ is the same as that obtained by exponentiating the zero-mode action of $L_1(\mathfrak{sl}_2)_{(1)}\cong\mathfrak{sl}_2$ on $T(W)$.

We have now shown that any $L_1(\mathfrak{sl}_2)$-module isomorphism $X\rightarrow W$ restricts to an $SL_2(\CC)$-module isomorphism on top levels, since the $SL_2(\CC)$-actions on both top levels are obtained by exponentiating the zero-mode actions of $L_1(\mathfrak{sl}_2)_{(1)}$. Then because both $X$ and $W$ are generated by their top levels, and because \eqref{eqn:sl_2_compat_1} and \eqref{eqn:sl_2_compat_2} both hold, we conclude that any $L_1(\mathfrak{sl}_2)$-module isomorphism $X\rightarrow W$ is also an $SL_2(\CC)$-module isomorphism.
\end{proof}

\section{Uniqueness of the chiral universal centralizer}\label{app:chiral_univ_cent_unique}

In this appendix, we prove the uniqueness assertion in Theorem \ref{thm:glue} using Theorems \ref{thm:2nd:L1_sl_2_unique} and \ref{thm:mod_for_L1_sl_2_unique}. Thus let $V$ be any simple conformal vertex algebra containing $V_1\otimes V_{25}$ as a vertex operator subalgebra and such that 
\begin{equation*}
V \cong \bigoplus_{r\in\ZZ_+} \cL_{r,1}^{(1)}\otimes\cL_{r,1}^{(25)}
\end{equation*}
as a $V_1\otimes V_{25}$-module. The $\mathfrak{sl}_2$-type fusion rules for $V_1$- and $V_{25}$-modules imply that $V$ has a conformal vertex algebra involution which acts as $(-1)^{r-1}$ on $\cL_{r,1}^{(1)}\otimes\cL_{r,1}^{(25)}$. Let $V^0$ be the fixed-point subalgebra of this involution and $V^1$ the eigenspace with eigenvalue $-1$, so that
\begin{equation*}
V^0 \cong \bigoplus_{n=0}^\infty \cL_{2n+1,1}^{(1)}\otimes\cL_{2n+1,1}^{(25)},\qquad V^1\cong\bigoplus_{n = 1}^\infty \cL_{2n,1}^{(1)}\otimes\cL_{2n,1}^{(25)}
\end{equation*}
as $V_1\otimes V_{25}$-modules. It is easy to see that $V^0$ is a simple conformal vertex algebra as follows: For any non-zero $v\in V^0$,
\begin{equation*}
V =\mathrm{span}\lbrace u_n v\,\vert\,u\in V, n\in\ZZ\rbrace = \mathrm{span}\lbrace u_n v\,\vert\,u\in V^0, n\in\ZZ\rbrace +\mathrm{span}\lbrace u_n v\,\vert\,u\in V^1, n\in\ZZ\rbrace
\end{equation*}
by \cite[Proposition 4.5.6]{LL} since $V$ is simple. But since $V=V^0\oplus V^1$ and since
\begin{equation*}
\mathrm{span}\lbrace u_n v\,\vert\,u\in V^i, n\in\ZZ\rbrace\subseteq V^i
\end{equation*}
for $i=0,1$, it follows that
\begin{equation*}
V^0=\mathrm{span}\lbrace u_n v\,\vert\,u\in V^0, n\in\ZZ\rbrace
\end{equation*}
for any non-zero $v\in V^0$, and thus $V^0$ is simple. A similar argument shows that $V^1$ is a simple $V^0$-module.

\begin{prop}\label{prop:V0_V1_unique}
Any simple conformal vertex algebra extension of $V_1\otimes V_{25}$ which is isomorphic to $V^0$ as a $V_1\otimes V_{25}$-module is isomorphic to $V^0$ as a conformal vertex algebra. Moreover, any simple $V^0$-module which is isomorphic to $V^1$ as a $V_1\otimes V_{25}$-module is isomorphic to $V^1$ as a $V^0$-module.
\end{prop}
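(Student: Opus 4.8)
The plan is to reduce both assertions to the uniqueness of $L_1(\mathfrak{sl}_2)$, respectively of its nontrivial simple module, as a $PSL_2(\CC)$-equivariant vertex operator algebra, respectively an $SL_2(\CC)$-equivariant module (Theorems~\ref{thm:2nd:L1_sl_2_unique} and \ref{thm:mod_for_L1_sl_2_unique}), by transporting these objects along the braid-reversed tensor equivalence $\cF\colon\ind(\cO_1^0)\to\ind(\cO_{25}^0)$ of Proposition~\ref{prop:braiding2} together with the symmetric tensor embedding $\iota\colon\rep PSL_2(\CC)\xrightarrow{\ \sim\ }\langle\cL_{2n+1,1}^{(1)}\,\vert\,n\in\NN\rangle\subseteq\cO_1^0$, $V(2n)\mapsto\cL_{2n+1,1}^{(1)}$, established at the end of the proof of Theorem~\ref{thm:chiral_univ_center_rep_theory} (here and below $\otimes$ denotes the Deligne product, and $\iota$, $\cF$ are extended to Ind-categories). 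The first thing I would record is that $V^0$ is the image of $\cW(-1)$ under the induced functor: regarding $\cW(-1)$, with the $PSL_2(\CC)$-action of Theorem~\ref{thm:W_minus_one}, as a simple commutative algebra in $\ind(\rep PSL_2(\CC)\otimes\cO_{25}^0)$, one has $V^0\cong(\iota\otimes\Id_{\ind\cO_{25}^0})(\cW(-1))$ as a conformal vertex algebra with the stated $V_1\otimes V_{25}$-decomposition.

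For the first assertion, let $\widetilde V^0$ be a simple conformal vertex algebra extension of $V_1\otimes V_{25}$ isomorphic to $V^0$ as a $V_1\otimes V_{25}$-module. Then $\widetilde V^0$ restricts to an object of $\ind(\cO_1^0\otimes\cO_{25}^0)$, hence is a simple commutative algebra there, and its underlying object lies in the essential image of $\iota\otimes\Id$. Since $\iota$ is fully faithful (so is $\iota\otimes\Id$) and its image $\langle\cL_{2n+1,1}^{(1)}\rangle\otimes\cO_{25}^0$ is closed under tensor products, the algebra structure of $\widetilde V^0$ pulls back to a unique simple commutative algebra structure $\widehat A$ on $\bigoplus_{n\geq 0}V(2n)\otimes\cL_{2n+1,1}^{(25)}$ in $\ind(\rep PSL_2(\CC)\otimes\cO_{25}^0)$ — equivalently, a unique simple $PSL_2(\CC)$-equivariant conformal vertex algebra extension of $V_{25}$ with that $PSL_2(\CC)\times V_{25}$-decomposition. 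Applying the braid-reversing tensor equivalence $\Id_{\rep PSL_2(\CC)}\otimes\cF^{-1}$ (braid-reversal is harmless since $\rep PSL_2(\CC)$ is symmetric) carries $\widehat A$ to a simple $PSL_2(\CC)$-equivariant vertex operator algebra extension $\widehat B$ of $V_1$ with $\widehat B\cong\bigoplus_{n\geq 0}V(2n)\otimes\cL_{2n+1,1}^{(1)}$ as a $PSL_2(\CC)\times V_1$-module; this $\widehat B$ is automatically a genuine vertex operator algebra, since every $\cL_{2n+1,1}^{(1)}$ has lowest conformal weight $n^2\geq 0$. By Theorem~\ref{thm:2nd:L1_sl_2_unique}, $\widehat B\cong L_1(\mathfrak{sl}_2)$ as $PSL_2(\CC)$-equivariant vertex operator algebras, hence $\widehat A\cong(\Id\otimes\cF)(L_1(\mathfrak{sl}_2))=\cW(-1)$, and therefore $\widetilde V^0\cong V^0$.

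For the second assertion I would run the same argument with $SL_2(\CC)$ in place of $PSL_2(\CC)$ and with the nontrivial simple $L_1(\mathfrak{sl}_2)$-module $X$ (and $\cX=\cF(X)$, with its $SL_2(\CC)$-action) taking the role of the algebra: I would realize $V^1\cong\bigoplus_{n\geq 1}\cL_{2n,1}^{(1)}\otimes\cL_{2n,1}^{(25)}$ as the image of $\cX$ under the module functor over $\iota\otimes\Id$ that one expects to be furnished by a $\rep PSL_2(\CC)$-module functor $\iota'\colon\rep SL_2(\CC)\to\cO_1^0$, $V(r-1)\mapsto\cL_{r,1}^{(1)}$, extending $\iota$, so that $SL_2(\CC)$-equivariant $\cW(-1)$-modules are sent to $V^0$-modules. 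Full faithfulness of $\iota'$ would then transport the $V^0$-module structure on a candidate $\widetilde V^1$ to a unique $SL_2(\CC)$-equivariant $\cW(-1)$-module structure, which $\cF^{-1}$ sends to an $SL_2(\CC)$-equivariant structure on a simple $L_1(\mathfrak{sl}_2)$-module with decomposition $\bigoplus_{n\geq 1}V(2n-1)\otimes\cL_{2n,1}^{(1)}$; by Theorem~\ref{thm:mod_for_L1_sl_2_unique} this module is isomorphic to $X$ with its standard $SL_2(\CC)$-action, and unwinding gives $\widetilde V^1\cong V^1$. \emph{The hard part is precisely this passage}: whereas in the first assertion one stays inside the symmetric subcategory $\langle\cL_{2n+1,1}^{(1)}\rangle$, here $V^1$ is supported on the odd sector $\langle\cL_{2n,1}^{(1)}\rangle$ of $\cO_1^0$, which is not symmetric (the double braiding of $\cL_{2,1}^{(1)}$ squares to $-\Id$) and carries the nontrivial abelian $3$-cocycle $\tau$ of Section~\ref{sec:O_25_0}; so one must verify that $\iota'$ exists as a $\rep PSL_2(\CC)$-module functor, is fully faithful, and is compatible with the action of the algebra $\cW(-1)$ — the point being that all homogeneous components of $\cW(-1)$ lie in the even, symmetric part, which is what prevents the $\tau$-twist from obstructing the comparison. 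Once this comparison functor is in place, the rest is formal.
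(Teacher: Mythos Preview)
Your strategy—transporting the algebra and module structures through tensor equivalences to reduce to Theorems~\ref{thm:2nd:L1_sl_2_unique} and \ref{thm:mod_for_L1_sl_2_unique}—is exactly the paper's approach, though the paper takes a slightly more direct route: rather than applying $\iota$ to the $\cO_1^0$ factor and then $\cF^{-1}$ to the $\cO_{25}^0$ factor, it applies the single equivalence $\cO_{25}^0\cong(\rep\mathfrak{sl}_2)^\tau$ of Proposition~\ref{prop: tens-equiv} to replace the $\cO_{25}^0$ factor, landing on the same object $A^0=\bigoplus_{n\geq 0}V(2n)\otimes\cL_{2n+1,1}^{(1)}$ in $\ind((\rep\mathfrak{sl}_2)^\tau\otimes\cO_1^0)$.

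There is, however, a genuine gap. What you obtain after transport is that $\widehat B$ is a simple commutative algebra \emph{in the equivariant category} $\ind(\rep PSL_2(\CC)\otimes\cO_1^0)$; unpacking, this says only that $\widehat B$ has no nontrivial $PSL_2(\CC)$-invariant ideals. But Theorem~\ref{thm:2nd:L1_sl_2_unique} requires $\widehat B$ to be a \emph{simple vertex operator algebra}, with no nontrivial ideals whatsoever. The paper spells this out as Conditions (1)--(2) and then closes the gap explicitly: given any nonzero ideal $I\subseteq A^0$, semisimplicity of $A^0$ as a $\cV ir$-module forces $I$ to contain some $v\otimes\cL_{2n+1,1}^{(1)}$ with $v\in V(2n)$ nonzero; Condition~(2) applied to the $PSL_2(\CC)$-invariant ideal generated by the \emph{full} summand $V(2n)\otimes\cL_{2n+1,1}^{(1)}$ shows that the component $\pi_1\circ Y_{A^0}$ restricted to this summand is a nonzero (hence nondegenerate) invariant pairing, and then pairing $v$ against a suitable $v'$ shows $\vac\in I$. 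This step is short but not automatic, and your argument is incomplete without it. (For $A^1$ the analogous passage is easier once $A^0\cong L_1(\mathfrak{sl}_2)$ is known, since rationality of $L_1(\mathfrak{sl}_2)$ forces any module with that decomposition to be simple.)

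As for the ``hard part'' you flag in the second assertion: the paper handles it in one line by observing that $(\rep\mathfrak{sl}_2)^\tau$ is equivalent to $\rep\mathfrak{sl}_2$ \emph{as a module category for} $\rep PSL_2(\CC)$—the $3$-cocycle $\tau$ affects only associators involving three odd factors, not the action of the even subcategory on the odd part. Your intuition that the obstruction vanishes because the algebra lives entirely in the even sector is correct; stating it as a module-category equivalence is cleaner than trying to construct $\iota'$ by hand.
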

\begin{proof}
From Proposition \ref{prop: tens-equiv}, $\cO_{25}^0$ is braided tensor equivalent to $(\rep\mathfrak{sl}_2)^\tau$ with a suitable braiding. Thus using \cite[Theorem 7.5]{CMY1}, uniqueness of the simple conformal vertex algebra structure on $V^0=\bigoplus_{n=0}^\infty \cL_{2n+1,1}^{(1)}\otimes\cL_{2n+1,1}^{(25)}$ extending $V_1\otimes V_{25}$ is equivalent to uniqueness of the simple commutative algebra structure on the object
\begin{equation*}
A^0=\bigoplus_{n=0}^\infty V(2n)\otimes\cL_{2n+1,1}^{(1)}
\end{equation*}
in the Ind-category $\ind((\rep\mathfrak{sl}_2)^\tau \otimes \cO_1^0)$. Moreover, uniqueness of the simple $V^0$-module structure on $V^1$ is equivalent to uniqueness of the simple $A^0$-module structure on the object
\begin{equation*}
A^1 =\bigoplus_{n=1}^\infty V(2n-1)\otimes\cL_{2n,1}^{(1)}
\end{equation*}
in $\ind((\rep\mathfrak{sl}_2)^\tau\otimes\cO_{1}^0)$.

Since the $\mathfrak{sl}_2$-modules $V(2n)$ for $n\in\NN$ are objects of the maximal symmetric tensor subcategory $\rep PSL_2(\CC)\subseteq(\rep\mathfrak{sl}_2)^\tau$, we can use \cite[Theorem 7.5]{CMY1} again to see that a simple commutative algebra structure on $A^0$ in $\ind((\rep\mathfrak{sl}_2)^\tau\otimes\cO_1^0)$ amounts to a vertex operator algebra structure $(A^0, Y_{A^0},\vac,\omega)$ such that:
\begin{enumerate}
\item The $PSL_2(\CC)$-action on the $V(2n)$ factors in $A^0$ gives a $PSL_2(\CC)$-action by vertex operator algebra automorphisms on $A^0$.

\item The only $PSL_2(\CC)$-invariant ideals of $A^0$ are $0$ and $A^0$.
\end{enumerate}
Moreover, uniqueness of the simple commutative algebra structure on $A^0$ in $\ind((\rep\mathfrak{sl}_2)^\tau\otimes\cO_1^0)$ amounts to uniqueness of the vertex operator algebra structure on $A^0$ up to an isomorphism that is also a $PSL_2(\CC)$-module isomorphism.

Similarly, because $(\rep\mathfrak{sl}_2)^\tau$ is equivalent to $\rep\mathfrak{sl}_2$ when considered as a module category for $\rep PSL_2(\CC)$, a simple $A^0$-module structure on $A^1$ (where $A^0$ is considered as a commutative algebra in $\ind((\rep\mathfrak{sl}_2)^\tau\otimes\cO_1^0)$) is equivalent to an $A^0$-module structure $(A^1, Y_{A^1})$ (where $A^0$ is considered as a vertex operator algebra) such that:
\begin{enumerate}

\item[(3)] The $SL_2(\CC)$-action on the $V(2n-1)$ factors in $A^1$ and the $PSL_2(\CC)$-action on $A^0$ are compatible in the sense that
\begin{equation*}
g\cdot Y_{A^1}(v,x)w =Y_{A^1}(g\cdot v,x)g\cdot w
\end{equation*}
for all $g\in SL_2(\CC)$, $v\in A^0$, and $w\in A^1$.

\item[(4)] The only $SL_2(\CC)$-invariant $A^0$-submodules of $A^1$ are $0$ and $A^1$.

\end{enumerate}
Moreover, uniqueness of the $A^0$-module structure on $A^1$ (where $A^1$ is considered as a commutative algebra in $\ind((\rep\mathfrak{sl}_2)^\tau \otimes \cO_1^0)$) is equivalent to uniqueness of the $A^0$-module structure on $A^1$ (where $A^0$ is considered as a vertex operator algebra) up to an $A^0$-module isomorphism that is also an $SL_2(\CC)$-module isomorphism.

We claim that Conditions (1) and (2) imply that $A^0$ is a simple vertex operator algebra, and we claim that $A^1$ is a simple $A^0$-module. Then the desired uniqueness assertions will follow from Theorems \ref{thm:2nd:L1_sl_2_unique} and \ref{thm:mod_for_L1_sl_2_unique}: $A^0$ will be isomoprhic to the simple affine vertex operator algebra $L_1(\mathfrak{sl}_2)$ with its standard $PSL_2(\CC)$-action by automorphisms, and $A^1$ will be isomorphic to the unique non-trivial simple $L_1(\mathfrak{sl}_2)$-module with its standard $SL_2(\CC)$-action.

To prove the claims, first let $I\subseteq A^0$ be a non-zero ideal. Since $A^0$ is a semisimple $\cV ir$-module, so is $I$, and thus $I$ contains $v\otimes\cL_{2n+1,1}^{(1)}$ for some $n\in\NN$ and non-zero $v\in V(2n)$. Now consider the ideal generated by $V(2n)\otimes\cL_{2n+1,1}^{(1)}$; it is $PSL_2(\CC)$-invariant by Condition (1), and thus it equals $A^0$ by Condition (2). In particular, the ideal generated by $V(2n)\otimes\cL_{2n+1,1}^{(1)}$ contains $V(0)\otimes\cL_{1,1}^{(1)}$. Since there is a non-zero $PSL_2(\CC)$-module homomorphism $V(2m)\otimes V(2n)\rightarrow V(0)$ if and only if $m=n$, it follows that the Virasoro intertwining operator
\begin{equation*}
\pi_1\circ Y_{A^0}\vert_{(V(2n)\otimes\cL_{2n+1,1}^{(1)})\otimes(V(2n)\otimes\cL_{2n+1,1}^{(1)})},
\end{equation*}
where $\pi_1: A^0\rightarrow V(0)\otimes\cL_{1,1}^{(1)}$ is the projection, is non-zero. Thus this intertwining operator has the form $b\otimes\cY$ where $b: V(2n)\otimes V(2n)\rightarrow V(0)$ is a non-zero (and thus non-degenerate) invariant bilinear form and $\cY$ is a non-zero intertwining operator of type $\binom{\cL_{1,1}^{(1)}}{\cL_{2n+1,1}^{(1)}\,\cL_{2n+1,1}^{(1)}}$.

Now recall that our non-zero ideal $I\subseteq A^0$ contains $v\otimes\cL_{2n+1,1}^{(1)}$ for some non-zero $v\in V(2n)$. Thus letting $v_{2n+1,1}$ denote a non-zero lowest-conformal-weight vector in $\cL_{2n+1,1}^{(1)}$, we have now shown that there is some $v'\in V(2n)$ such that
\begin{equation*}
Y_{A^0}(v'\otimes v_{2n+1,1},x)(v\otimes v_{2n+1,1})\in I((x))
\end{equation*}
has non-zero projection to $V(0)\otimes\cL_{1,1}^{(1)}$. Then because $I$ is a semisimple $\cV ir$-module, this shows that $I$ contains $V(0)\otimes\cL_{1,1}^{(1)}$ and thus contains the vacuum vector $\vac$. Since $\vac$ generates $A^0$ as an $A^0$-module, we conclude that $I=A^0$, proving $A^0$ is simple.

To show that $A^1$ is a simple $A^0$-module, we can now use Theorem \ref{thm:L1_sl_2_unique}, which shows that $A^0$ is isomorphic to $L_1(\mathfrak{sl}_2)$ as a vertex operator algebra. Then $A^1$ must be simple because the only $L_1(\mathfrak{sl}_2)$-module with the same $V_1$-module decomposition as $A^1$ is the non-trivial simple $L_1(\mathfrak{sl}_2)$-module.
\end{proof}

We can now prove uniqueness of the simple conformal vertex algebra structure on $V^0\oplus V^1$, and thus the uniqueness assertion in Theorem \ref{thm:glue}:
\begin{proof}
Let $(V,Y_V,\vac,\omega)$ and $(V,\til{Y}_V,\vac,\omega)$ be two simple conformal vertex algebra structures on $V$ such that
\begin{equation*}
Y_V\vert_{(\cL^{(1)}_{1,1}\otimes\cL_{1,1}^{(25)})\otimes V} =\til{Y}_V\vert_{(\cL^{(1)}_{1,1}\otimes\cL_{1,1}^{(25)})\otimes V}.
\end{equation*}
Since $V=V^0\oplus V^1$ where $V^0$ is a simple subalgebra and $V^1$ is a simple $V^0$-module with respect to either conformal vertex algebra structure on $V$, Proposition \ref{prop:V0_V1_unique} shows that we may assume
\begin{equation*}
Y_V\vert_{V^0\otimes V} = \til{Y}_V\vert_{V^0\otimes V}.
\end{equation*}
Then skew-symmetry dictates that 
\begin{equation*}
Y_V\vert_{V^1\otimes V^0}=e^{xL_{-1}}Y_V(\cdot,-x)\cdot\vert_{V^0\otimes V^1} =e^{xL_{-1}}\til{Y}_V(\cdot,-x)\cdot\vert_{V^0\otimes V^1}=\til{Y}_V\vert_{V^1\otimes V^0},
\end{equation*}
so it remains to consider $\til{Y}_V\vert_{V^1\otimes V^1}$.

The $\mathfrak{sl}_2$-type fusion rules of $V_1$- and $V_{25}$-modules show that $\mathrm{Im}\,\til{Y}_V\vert_{V^1\otimes V^1}\subseteq V^0$. We claim the space of $V^0$-module intertwining operators of type $\binom{V^0}{V^1\,V^1}$ is one dimensional, so that
\begin{equation*}
\til{Y}_V\vert_{V^1\otimes V^1}=\alpha\cdot Y_V\vert_{V^1\otimes V^1}
\end{equation*}
for some $\alpha\in\CC$ (note that $Y_V\vert_{V^1\otimes V^1}\neq 0$ since otherwise $V^1$ would be a proper ideal of $(V,Y_V,\vac,\omega)$). Assuming the claim, we have $\alpha\neq 0$ since $(V,\til{Y}_V,\vac,\omega)$ is a simple conformal vertex algebra, so then
\begin{equation*}
\sigma :=\Id_{V^0}\oplus\sqrt{\alpha}\cdot\Id_{V^1}
\end{equation*}
is a linear isomorphism of $V$ such that $\sigma\circ \til{Y}_V=Y_V\circ(\sigma\otimes\sigma)$, showing that the two simple conformal vertex algebra structures on $V$ are isomorphic (this is a special case of the uniqueness proof for simple current extensions in \cite[Proposition 5.3]{DM}).

To prove the claim, let $q_r: \cL_{r,1}^{(1)}\otimes\cL_{r,1}^{(25)}\rightarrow V$ and $\pi_r: V\rightarrow\cL_{r,1}^{(1)}\otimes\cL_{r,1}^{(25)}$ for $r\in\ZZ_+$ denote the natural inclusion and projection, respectively. It is enough to show that the linear map
\begin{equation*}
\cY\mapsto\pi_1\circ\cY\circ(q_{2}\otimes q_{2})
\end{equation*}
from the space of $V^0$-module intertwining operators of type $\binom{V^0}{V^1\,V^1}$ to the one-dimensional space of $V_1\otimes V_{25}$-module intertwining operators of type $\binom{\cL_{1,1}^{(1)}\otimes\cL_{1,1}^{(25)}}{\cL_{2,1}^{(1)}\otimes\cL_{2,1}^{(25)}\,\cL_{2,1}^{(1)}\otimes\cL_{2,1}^{(25)}}$ is injective. Indeed, let
\begin{align*}
\cY: V^1\otimes V^1 & \rightarrow V^0((x))\nonumber\\
u_1\otimes v_1 & \mapsto\cY(u_1,x)v_1=\sum_{n\in\ZZ} (u_1)_n v_1\,x^{-n-1}
\end{align*}
be a non-zero intertwining operator, and fix a non-zero $v_1\in\cL_{2,1}^{(1)}\otimes\cL_{2,1}^{(25)}$. Then
\begin{equation*}
\mathrm{span}\lbrace (u_1)_n v_1\,\vert\,u_1\in V^1, n\in\ZZ\rbrace
\end{equation*}
is a $V^0$-submodule of $V^0$ by the easy intertwining operator generalization of \cite[Proposition 4.5.7]{LL}. Using the commutator formula for intertwining operators, we see that this submodule is non-zero, since $v_1$ generates $V^1$ as a $V^0$-module and $\cY$ is non-zero (see \cite[Proposition 11.9]{DL}). So in fact
\begin{equation*}
\cL_{1,1}^{(1)}\otimes\cL_{1,1}^{(25)} =\pi_1(V^0) =\pi_1(\mathrm{span}\lbrace (u_1)_n v_1\,\vert\,u_1\in V^1, n\in\ZZ\rbrace)
\end{equation*}
since $V^0$ is simple. That is, there exists $m\in\ZZ_+$ such that
\begin{equation*}
\pi_1\circ\cY\circ(q_{2m}\otimes q_2)\neq 0.
\end{equation*}
Finally, the $\mathfrak{sl}_2$-type fusion rules of $V_1$- and $V_{25}$-modules force $m=1$, completing the proof of the claim and also of the theorem.
\end{proof}

\end{document}